\newtheorem{thm}{Theorem}[section]
\newtheorem{lem}[thm]{Lemma}
\newtheorem{prop}[thm]{Proposition}
\newtheorem{cor}[thm]{Corollary}
\newtheorem{quest}[thm]{Question}
\newcommand{\newtheorembox}[2]{\newtheorem{x#1}[thm]{#2}
    \newenvironment{#1}{\begin{x#1}}{\qed\end{x#1}}}
\theoremstyle{definition}
\newtheorem{defn}[thm]{Definition}
\newcommand{\arxiv}[1]{\href{http://arxiv.org/abs/#1}{arXiv:#1}}
\newcommand{\bff}{\mathbf f}
\newcommand{\cA}{\mathcal A}
\newcommand{\cD}{\mathcal D}
\newcommand{\cP}{\mathcal P}
\newcommand{\defi}[1]{\emph{#1}}
\newcommand{\degree}{\operatorname{deg}}
\newcommand{\isom}{\cong}
\newcommand{\link}{\operatorname{link}}
\newcommand{\NS}{\operatorname{NS}}
\newcommand{\Pic}{\operatorname{Pic}}
\newcommand{\QQ}{\mathbb Q}
\newcommand{\ridge}{\operatorname{ridge}}
\newcommand{\RR}{\mathbb R}
\newcommand{\td}{\operatorname{td}}
\newcommand{\tors}{\mathrm{tors}}
\newcommand{\ZZ}{\mathbb Z}
\renewcommand{\div}{\operatorname{div}}
\title{Combinatorial tropical surfaces}
\author{Dustin Cartwright}
\address{Department of Mathematics \\ University of Tennessee \\ 227 Ayres Hall
         Knoxville, TN 37996}
\email{cartwright@utk.edu}
\begin{document}

\begin{abstract}
We study the combinatorial properties of $2$-dimensional tropical complexes. In
particular, we prove tropical analogues of the Hodge index theorem
and Noether's formula. In addition, we introduce algebraic
equivalence for divisors on tropical complexes of arbitrary dimension.
\end{abstract}

\maketitle

\section{Introduction}

In recent years, a number of results from algebraic curves have inspired
tropical analogues for finite graphs and tropical curves, such as the tropical
Riemann-Roch, Abel-Jacobi, and Torelli
theorems~\cite{baker-norine,mikhalkin-zharkov,brannetti-melo-viviani}. In this
paper, we look at combinatorial analogues of basic results on algebraic
surfaces. Our combinatorial setting is that of tropical complexes, as introduced
in~\cite{cartwright-complexes}. A tropical complex consists of both a
combinatorial topological space, specifically a $\Delta$-complex, together with
some additional integers, which give a theory of divisors generalizing that on
graphs to higher dimensions. For most of this paper we will work with
2-dimensional tropical complexes, which we will refer to as tropical surfaces.

Our first result is an analogue of the Hodge index theorem for the intersection
product on a tropical surface.
In addition to the axioms of a tropical complex, we need the
topological assumption that the underlying $\Delta$-complex is locally connected through
codimension~$1$, meaning that the link of each vertex is connected.

\begin{thm}\label{thm:hodge}
Let $\Delta$ be a tropical surface which is
locally connected through codimension~$1$. Then the intersection pairing on
$\NS(\Delta) \otimes_\ZZ \QQ$ is a non-degenerate bilinear form
whose matrix has at most one positive eigenvalue.
\end{thm}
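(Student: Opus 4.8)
The plan is to run the skeleton of the classical proof of the Hodge index theorem, substituting for its algebro-geometric inputs (Riemann--Roch, positivity of ample classes) the combinatorics of the vertex links. I would start from a purely linear-algebraic reduction: a symmetric form on a finite-dimensional rational vector space has at most one positive eigenvalue if and only if it has no $2$-dimensional positive-definite subspace, and it is non-degenerate if and only if its radical vanishes. If the form on $\NS(\Delta)\otimes\QQ$ has no positive eigenvalue then, once non-degeneracy is known, it is negative definite and there is nothing more to prove; otherwise there is a class $H$ with $H^2>0$, which may be taken rational since $H^2>0$ is an open condition, and it then suffices to show that the form restricted to $H^\perp$ is negative semidefinite, together with the non-degeneracy statement.

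The topological hypothesis enters only through the links, so the next step is a local analysis at each vertex $v$. The link $\link(v)$ is a graph, and from the axioms of a tropical complex I would isolate the part of the intersection pairing that is concentrated at $v$ --- the pairing obtained by recording only what a divisor does on the star of $v$ --- and identify it, up to sign, with a Laplacian-type form of $\link(v)$, the structure constants supplying the diagonal entries. The crucial input is the classical fact that the Laplacian of a connected graph is positive semidefinite with one-dimensional kernel spanned by the all-ones vector; applied here it says the local form at $v$ is negative semidefinite with a completely understood, one-dimensional radical. This is precisely where local connectivity is needed: without it the local radical could be larger and the eigenvalue count would break down.

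The heart of the argument is to patch the local pictures together. I would set up a local-to-global comparison --- an exact sequence of Mayer--Vietoris type relating $\NS(\Delta)\otimes\QQ$ with its intersection form to the collection of local forms at the vertices and to the cohomology of $\Delta$ --- and read off two things. First, the radical of the global form should consist of classes restricting to the one-dimensional kernel of the local form at every vertex, that is, ``locally constant'' classes, which I would identify with the algebraically trivial classes; this pins down algebraic equivalence as the radical and gives non-degeneracy. Second, the negative semidefiniteness of all the local forms, transported through the comparison sequence, should force the global form to be negative semidefinite away from a single global direction coming from $H^0(\Delta)$ (using that $\Delta$ is connected), which one checks has positive square and serves as $H$. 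A class in $H^\perp$ then restricts at each vertex into the negative part of the local Laplacian pairing, so its self-intersection is a sum of nonpositive local contributions, and $H^\perp$ is negative semidefinite as required.

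I expect this last step to be the main obstacle, with two distinct difficulties. One is bookkeeping: making the decomposition of $D\cdot E$ into local contributions precise, including the structure-constant corrections, and checking that orthogonality to $H$ genuinely pushes each local restriction into the negative cone rather than merely constraining the sum. The other, more serious, is the identification of the radical with algebraic equivalence --- the combinatorial analogue of the nontrivial fact that a numerically trivial divisor on a surface is algebraically trivial --- and coaxing the homological input, connectivity of $\Delta$ and of all its links, to deliver exactly this will require the most care.
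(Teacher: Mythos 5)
Your local analysis has the signs wrong, and this breaks the whole architecture of the proposal. The form concentrated at a vertex $v$ is the local intersection matrix $M_v$: its off-diagonal entries are adjacency counts in $\link_\Delta(v)$, but its diagonal entries are $-\alpha(w,e)$ plus loop corrections, its rows need not sum to zero, and it is not (the negative of) a graph Laplacian. More importantly, the defining axiom of a tropical surface is that $M_v$ has \emph{exactly one positive eigenvalue}; it is not negative semidefinite. If every local form were negative semidefinite, then since $\deg D^2$ is the sum of local contributions $\bff_v^T M_v \bff_v$, the global pairing would be negative semidefinite and no divisor could have positive square --- contradicting, e.g., the torus of Example~\ref{ex:torus}, where $\NS(\Delta)\isom\ZZ^3$ carries an indefinite form. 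Local connectivity of the links is also not used to control the size of a local radical: it enters via Perron--Frobenius, showing that the unique positive eigenvalue of $M_{p,\Sigma}$ has an eigenvector with strictly positive entries, which yields the maximum-modulus principle (Proposition~\ref{prop:no-regular-local}) and hence that the only global linear functions are constants.

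The paper's actual mechanism is an eigenvalue count on the block-diagonal matrix $M_\Delta=\bigoplus_v M_v$, which has exactly $n$ positive eigenvalues ($n$ the number of vertices). Lemma~\ref{lem:master} exhibits a subspace of $\QQ^N$ --- spanned by the $n$ vertex ``hat functions'' (subject to the single relation coming from constancy of global linear functions), representatives of the $k$-dimensional kernel of the pairing on $\Pic_{\ridge}(\Delta)\otimes_{\ZZ}\QQ$, dual vectors to these, and $m$ classes spanning a positive-definite subspace --- on which the restricted form has $n+k-1+m$ positive eigenvalues; since restriction cannot increase that count, $k+m\le 1$. Non-degeneracy is not obtained from a Mayer--Vietoris comparison but from the explicit linear algebra of Proposition~\ref{prop:numerical-algebraic}: the numerically trivial classes are those defined by vectors in $C+\ker M_\Delta$, where $C$ is spanned by the compatibility vectors $\mathbf c_e$, and Lemma~\ref{lem:criterion-alg-trival} shows the resulting $1$-cochains are cocycles, hence algebraically trivial. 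Your instinct that identifying the radical with algebraic equivalence is the hard input is correct, but the route you propose --- negative semidefinite local Laplacians patched by a comparison sequence --- cannot be carried out without replacing its central premise.
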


The N\'eron-Severi group $\NS(\Delta)$ from Theorem~\ref{thm:hodge} refers to
the group of Cartier divisors on~$\Delta$ up to algebraic equivalence. Algebraic
equivalence on tropical complexes is introduced in this paper and
serves as a coarser relationship than linear equivalence of
divisors~\cite[Sec.~4]{cartwright-complexes}. Algebraic equivalence arises
naturally from a long exact sequence analogous to the exponential sequence for
complex manifolds. Note that algebraic equivalence and the tropical exponential
sequence are defined for tropical complexes of arbitrary dimension, unlike the
majority of the results in this paper that only apply to surfaces.

While smooth proper algebraic surfaces are always projective and thus have a
divisor with positive self-intersection, the same is not true for tropical
surfaces. In fact, even for tropical surfaces which are locally
connected through codimension~$1$, N\'eron-Severi group can be trivial (see
Example~\ref{ex:hopf} for details). Thus, we suggest that having a divisor with positive
self-intersection is an analogue for tropical surfaces of a projectivity or
K\"ahler hypothesis on a complex surface. For example, the following gives us a
tropical analogue of the Jacobian of a surface under such a hypothesis.
\begin{thm}\label{thm:jacobian}
Let $\Delta$ be a tropical surface which is locally connected through
dimension~$1$ and has a divisor~$D$ such that $\deg D^2 > 0$. Then, the group of
algebraically trivial divisors on~$\Delta$ modulo linear equivalence
of~$\Delta$ has the structure of a real torus $(\RR/\ZZ)^b$ where $b =
\dim_{\RR} H^1(\Delta, \RR)$.
\end{thm}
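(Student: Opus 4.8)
The plan is to extract the torus from the long exact sequence of the tropical exponential sequence, with the divisor $D$ playing the role of a polarization in the way a K\"ahler class does for complex surfaces. Write the tropical exponential sequence as a short exact sequence of sheaves $0 \to \underline{\ZZ} \to \mathcal{A} \to \mathcal{D} \to 0$ on $\Delta$, where $\mathcal{A}$ is the sheaf of real-valued functions playing the role of $\mathcal{O}_X$ (with the inclusion $\underline{\ZZ} \hookrightarrow \mathcal{A}$ factoring through the constant sheaf $\underline{\RR}$), so that the long exact sequence involves the group of Cartier divisors modulo linear equivalence together with a connecting homomorphism $c_1$ to $H^2(\Delta,\underline{\ZZ})$ that defines algebraic equivalence. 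Then the group $J$ in the statement is $\ker c_1$, and a diagram chase identifies $J$ with $H^1(\Delta,\mathcal{A})/\Lambda$, where $\Lambda$ is the image of $H^1(\Delta,\underline{\ZZ})$ in $H^1(\Delta,\mathcal{A})$. So it suffices to prove that $H^1(\Delta,\mathcal{A})$ is a real vector space of dimension $b = \dim_\RR H^1(\Delta,\RR)$ and that $\Lambda$ is a discrete subgroup of full rank, since together these yield $J \cong (\RR/\ZZ)^b$.

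I would deduce both of these from the single assertion that the natural map $H^1(\Delta,\RR) \to H^1(\Delta,\mathcal{A})$ induced by $\underline{\RR} \subseteq \mathcal{A}$ is an isomorphism — the tropical counterpart of the fact that on a compact K\"ahler manifold the composite $H^1(X,\RR) \hookrightarrow H^1(X,\CC) \twoheadrightarrow H^1(X,\mathcal{O}_X)$ is an $\RR$-linear isomorphism. Granting it, $\dim_\RR H^1(\Delta,\mathcal{A}) = b$, and, because $H^1(\Delta,\underline{\ZZ}) \to H^1(\Delta,\mathcal{A})$ factors through $H^1(\Delta,\underline{\ZZ}) \to H^1(\Delta,\RR)$, the subgroup $\Lambda$ becomes the image of $H^1(\Delta,\underline{\ZZ})$ in $H^1(\Delta,\RR)$; this is a discrete lattice of rank $b$ because $\Delta$ is a finite $\Delta$-complex, so the free quotient of $H^1(\Delta,\underline{\ZZ})$ spans $H^1(\Delta,\QQ)$ and $H^1(\Delta,\QQ)\otimes_\QQ\RR = H^1(\Delta,\RR)$. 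On a $1$-dimensional tropical complex this isomorphism specializes to the Hodge theory of harmonic $1$-forms underlying the usual construction of the Jacobian of a tropical curve, which is a useful consistency check.

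The content therefore lies in proving $H^1(\Delta,\RR) \xrightarrow{\ \sim\ } H^1(\Delta,\mathcal{A})$, and this is where I expect the main obstacle to be and the one place the hypothesis $\deg D^2 > 0$ is genuinely used. Surjectivity (equivalently, that every real $1$-cohomology class lifts to $\mathcal{A}$) should come from the topological assumption that $\Delta$ is locally connected through codimension $1$: this gives a maximum principle forcing globally harmonic functions to be locally constant, which one feeds into a Mayer--Vietoris or \v{C}ech computation with the exact sequence $0 \to \underline{\RR} \to \mathcal{A} \to \mathcal{A}/\underline{\RR} \to 0$. Injectivity is the delicate part: a nonzero kernel class is represented by a real $1$-cocycle that is exact over $\underline{\RR}$ but not over $\mathcal{A}$, and I would pair such a class against the class of $D$ in the intersection product, which Theorem~\ref{thm:hodge} makes non-degenerate and of signature with a single positive direction precisely because $\deg D^2 > 0$; running this positivity argument forces the class to vanish, in the manner of the $\partial\bar\partial$-lemma in the K\"ahler case. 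The necessity of the positivity hypothesis is then exactly Example~\ref{ex:hopf}: without a divisor of positive self-intersection, $H^1(\Delta,\mathcal{A})$ can be strictly larger than $H^1(\Delta,\RR)$, so $\Lambda$ drops rank and $J$ is only a non-compact real Lie group rather than a torus.
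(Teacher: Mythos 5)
There is a genuine gap, and it is located at the very first step: the identification of the group to be shown to be a torus. The tropical exponential sequence in this setting is the long exact sequence of $0 \to \RR \to \cA \to \cD \to 0$, where $\RR$ is the sheaf of locally constant real functions, $\cA$ is the sheaf of linear functions, and $\Pic(\Delta) = H^1(\Delta,\cA)$. The analogy with the complex exponential sequence carries a degree shift: $H^1(X,\ZZ)$ corresponds to $H^0(\Delta,\cD)$ (global integral differentials), $H^1(X,\mathcal O_X)$ corresponds to ordinary cohomology $H^1(\Delta,\RR)$, and $H^1(X,\mathcal O_X^*)$ corresponds to $H^1(\Delta,\cA)$. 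Exactness of~(\ref{eq:exp}) therefore identifies the algebraically trivial divisors modulo linear equivalence with $H^1(\Delta,\RR)/\operatorname{im} H^0(\Delta,\cD)$, \emph{not} with $H^1(\Delta,\cA)/\operatorname{im} H^1(\Delta,\ZZ)$ as you claim. Your lattice $\Lambda = \operatorname{im}(H^1(\Delta,\ZZ))$ has full rank in $H^1(\Delta,\RR)$ automatically, by universal coefficients, so your argument would yield compactness without ever using $\deg D^2 > 0$; this is contradicted by Example~\ref{ex:hopf}, where the group in question is all of $\Pic(\Delta) \cong H^1(\Delta,\RR) \cong \RR$ (non-compact) while your formula returns $\RR/\ZZ$. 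Your diagnosis of that example is also off: there $H^1(\Delta,\cA)$ and $H^1(\Delta,\RR)$ coincide, and what fails is that $H^0(\Delta,\cD) = 0$ has rank $0 < b = 1$. Likewise, your proposed key isomorphism $H^1(\Delta,\RR) \to H^1(\Delta,\cA)$ is false precisely in the cases the theorem covers: on the torus of Example~\ref{ex:torus} this map has kernel $\ZZ^2$ and misses the free part $\ZZ^3 = \NS(\Delta)$ of the Picard group.

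The actual content of the theorem is that the discrete group $H^0(\Delta,\cD)$, which injects into $H^1(\Delta,\RR)$ by Corollary~\ref{cor:no-regular}, has image of full rank~$b$. That is Theorem~\ref{thm:differentials}: one takes integral classes $\sigma_1,\dots,\sigma_k$ spanning a complement of the $\RR$-span of $H^0(\Delta,\cD)$, pushes them into $\Pic_{\ridge}(\Delta)$ where they are algebraically and hence numerically trivial by Proposition~\ref{prop:intersect-alg}, and applies the Hodge-index bookkeeping of Lemma~\ref{lem:master} ($m + k \le 1$) to conclude $k \le 1$, with $k = 0$ once a divisor of positive self-intersection forces $m = 1$. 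You correctly name the classical fact being imitated --- that $H^1(X,\ZZ)$ is a lattice in $H^1(X,\mathcal O_X)$ --- but you attach it to the wrong map, and your substitute argument (pairing a class of $H^1(\Delta,\RR)$ against $D$ to force vanishing in $H^1(\Delta,\cA)$) does not produce a well-defined number, let alone the needed rank statement. To repair the proof you must replace $\Lambda$ by $\operatorname{im} H^0(\Delta,\cD)$ and prove that this image spans $H^1(\Delta,\RR)$; that is where all the work, and the hypothesis $\deg D^2 > 0$, actually enter.
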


Our next result is a definition of the second Todd class and a tropical analogue
of Noether's formula. Recall that the second Todd class on a projective
algebraic surface~$S$
can be defined as $\frac{1}{12}(K_S^2 + c_2(S))$, where $K_S$ is the canonical
divisor and $c_2(S)$ is the second Chern class of~$S$. Then, Noether's formula
asserts that the degree of the second Todd class equals the holomorphic Euler
characteristic of~$S$. We construct a second Todd class not just on tropical
surfaces, but on weak tropical surfaces, for which one of the axioms of a
tropical surface is dropped. The second Todd class of a weak tropical surface is
a formal sum of the vertices, with coefficients in $\frac{1}{12}\ZZ$.
\begin{prop}\label{prop:noether}
If $\Delta$ is a weak tropical surface, then the degree of its second
Todd class $\td_2(\Delta)$ equals the topological Euler characteristic
$\chi(\Delta)$.
\end{prop}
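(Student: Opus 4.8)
The plan is to reduce the statement to a computation localized at the vertices of~$\Delta$. As with complex surfaces, $\td_2(\Delta)$ is defined as $\frac{1}{12}\bigl(K_\Delta\cdot K_\Delta + c_2(\Delta)\bigr)$, where $K_\Delta$ is the canonical divisor, $c_2(\Delta)$ the second Chern class, and $K_\Delta\cdot K_\Delta$ is formed with the intersection product on a tropical surface. The $0$-cycle $K_\Delta\cdot K_\Delta$ can be represented by a cycle supported on the vertices — any intersection point in the interior of an edge or triangle can be pushed to a vertex — and $c_2(\Delta)$ is by construction a formal sum of vertices, so $\td_2(\Delta)=\sum_v\td_2(v)\,v$ with $\td_2(v)\in\frac{1}{12}\ZZ$ and $\deg\td_2(\Delta)=\sum_v\td_2(v)$. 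On the topological side I would use the decomposition $\chi(\Delta)=\sum_v\bigl(1-\tfrac{1}{2}n_v+\tfrac{1}{3}m_v\bigr)$, where $n_v$ and $m_v$ are the numbers of vertices and edges of $\link(v)$ — equivalently the numbers of edge-ends and triangle-corners at $v$ — which holds because $\sum_v n_v=2f_1$ and $\sum_v m_v=3f_2$, with $f_i$ the number of $i$-simplices. It therefore suffices to show $\sum_v 12\,\td_2(v)=\sum_v\bigl(12-6n_v+4m_v\bigr)$.

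Next I would carry out the two ingredient computations, organized by vertex. The coefficient of $v$ in $c_2(\Delta)$ is read off from its definition: it is a function of the combinatorial type of the graph $\link(v)$ (notably its first Betti number $m_v-n_v+\#\{\text{components}\}$) together with the local structure constants at~$v$; when $\Delta$ is a manifold near~$v$, $\link(v)$ is a cycle and one checks directly that this coefficient is $12-2m_v$. The local multiplicity of $K_\Delta\cdot K_\Delta$ at~$v$ I would obtain by choosing a local representative of $K_\Delta$ on the star of~$v$ — the canonical divisor is cut out near~$v$ by an explicit piecewise-linear function determined by the structure constants — and applying the self-intersection formula for divisors on a tropical surface \cite{cartwright-complexes}, which produces a quadratic expression in the slopes of that function and the structure constants. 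Only the axioms of a \emph{weak} tropical surface and the definition of the intersection product are used in this step; in particular the local positivity axiom that separates tropical from weak tropical surfaces plays no role, which is precisely why the proposition holds at this level of generality.

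Finally I would add the two contributions and sum over vertices: the dependence on the structure constants in the $K_\Delta\cdot K_\Delta$ terms and in the $c_2(\Delta)$ terms cancels once the two are combined, leaving $\sum_v\bigl(12-6n_v+4m_v\bigr)=12(f_0-f_1+f_2)=12\,\chi(\Delta)$, and dividing by~$12$ gives the claim. I expect this cancellation to be the main obstacle: it is the combinatorial shadow of the fact that $K_S^2+c_2(S)$ is a topological invariant of a complex surface, and verifying it requires performing the self-intersection computation for the canonical divisor explicitly in local coordinates on each $\link(v)$ and matching its structure-constant dependence, term by term, with the dependence built into $c_2$. A secondary, purely bookkeeping, difficulty is that $\Delta$ is only a $\Delta$-complex — an edge may form a loop at~$v$, and two triangles may be glued along more than one edge — which changes the precise relation between $n_v$, $m_v$ and the face numbers but not the structure of the argument.
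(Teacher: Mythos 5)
Your proof starts from the wrong definition. In this paper $\td_2(\Delta)$ is not \emph{defined} as $\frac{1}{12}\bigl(K_\Delta\cdot K_\Delta + c_2(\Delta)\bigr)$; it is given by the explicit combinatorial formula of Definition~\ref{def:todd},
\[
\td_2(\Delta) = \frac{1}{12}\sum_{v\in\Delta_0}\Bigl(12 + 5F_v - 6E_v - \sum_{e\in\link_\Delta(v)_0}\alpha(v,e)\Bigr)[v],
\]
and the identity $12\,\td_2 = K^2 + c_2$ is only verified a posteriori, and only for matroid fans using Shaw's $c_2$ (the final corollary of Section~\ref{sec:noether}). For a general weak tropical surface neither ingredient of your formula is available: the paper defines no second Chern class $c_2(\Delta)$ in this generality, and the canonical class $K_\Delta=\sum_e(\deg e-2)[e]$ need not be $\QQ$-Cartier, so $K_\Delta\cdot K_\Delta$ need not exist (even in the special case of rank-$3$ matroid fans the paper has to \emph{prove} that $K_V$ is a divisor, Lemma~\ref{lem:matroid-invariants}). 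On top of that, the step you yourself flag as the main obstacle --- the cancellation of the structure-constant dependence between the $K^2$ and $c_2$ contributions --- is exactly the content you would need to supply and is nowhere carried out.

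The correct argument is much shorter and goes directly through the explicit formula; your topological bookkeeping is in fact the right half of it. With $n_v=E_v$ and $m_v=F_v$ one has $\sum_v E_v = 2E$ and $\sum_v F_v = 3F$, and the only additional input is the weak-tropical-surface axiom $\alpha(v,e)+\alpha(w,e)=\deg(e)$, which gives $\sum_v\sum_{e\ni v}\alpha(v,e)=\sum_{e\in\Delta_1}\deg(e)=3F$. Substituting these into the definition yields
\[
\deg \td_2(\Delta) = V + \tfrac{5}{4}F - E - \tfrac{1}{4}F = V - E + F = \chi(\Delta).
\]
So the double counting you set up at the start already finishes the proof once it is applied to the actual definition; the detour through $K^2+c_2$ is both unnecessary and, as written, not well posed.
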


Our definition of the second Todd class is compatible with the local invariant
from the affine linear Gauss-Bonnet theorem of Kontsevich and
Soibelman~\cite[Sec.~6.5]{kontsevich-soibelman} and with
Shaw's definition of the second Chern class on tropical
manifolds~\cite[Sec.~3.2]{shaw}. Moreover, the second Todd class has an
interpretation as a local invariant at any point of a multiplicity-free tropical
variety.

The rest of the paper is organized as follows. In Section~\ref{sec:local}, we
look at the local structure of tropical surfaces and prove an analogue of the
maximum modulus principle. In Section~\ref{sec:alg}, we define the tropical
exponential sequence and algebraic equivalence using a cohomological
interpretation of the Picard group of a tropical complex.
In Section~\ref{sec:pairing}, we study the intersection
pairing on a tropical surface, proving Theorems~\ref{thm:hodge}
and~\ref{thm:jacobian}. Section~\ref{sec:noether} defines the second
Todd class and proves Noether's formula.

\subsection*{Acknowledgments}

Throughout this project, I've benefited from my conversations with Matt Baker,
Spencer Backman, Alex Fink, Christian Haase, Paul Hacking, June Huh, Eric Katz,
Madhusudan Manjunath, Kristin Shaw, Farbod Shokrieh, Bernd Sturmfels, Yu-jong
Tzeng, and Josephine Yu. I'd especially like to thank Sam Payne for his many
insightful suggestions and thoughtful comments on an earlier draft of this
paper. I was supported by the National Science Foundation award number
DMS-1103856.

\section{Local theory}\label{sec:local}

In this section, we will study the local structure of tropical surfaces. We
first recall the definition of a tropical complex~\cite[Sec.~2]{cartwright-complexes}, in the $2$-dimensional case of interest in
this paper. A tropical complex is built from a $\Delta$-complex is a
combinatorial class of topological spaces as in \cite[Sec.~2.1]{hatcher} or
\cite[Def.~2.44]{kozlov}. In the 2-dimensional case, we will refer to the cells
of dimensions $0$, $1$, and~$2$ as the \defi{vertices},
\defi{edges}, and \defi{facets}, respectively.

Since $\Delta$-complexes allow faces of
a single simplex to be identified with each other, an edge may contain only $1$
vertex, but we will write \defi{endpoint} for the endpoints of an edge before
such an identification. The link, denoted $\link_\Delta(v)$ or
$\link_\Delta(e)$, encodes the local structure of~$\Delta$ around a vertex~$v$
or an edge~$e$ of~$\Delta$~\cite[p. 31]{kozlov}. The link of an edge is a finite set and the link of
a vertex~$v$ is a graph whose vertices correspond to edges~$e$ together with an
identification of~$v$ as an endpoint of~$e$.

\begin{defn}
A \defi{$2$-dimensional weak tropical complex} or \defi{weak tropical
surface}~$\Delta$
is a finite, connected $\Delta$-complex, consisting of cells of dimension at
most~$2$, together with integers $\alpha(v, e)$
for every endpoint~$v$ of an edge~$e$ such that,
for each edge~$e$ with endpoints $v$ and~$w$, we have an equality:
\begin{equation}\label{eq:constraint}
\alpha(v, e) + \alpha(w, e) = \deg(e),
\end{equation}
where $\deg(e)$ is the cardinality of $\link_\Delta(e)$.

At each vertex~$v$ of a weak tropical surface~$\Delta$, we construct a
\defi{local intersection matrix} $M_v$ whose rows and columns are indexed by the
vertices of $\link_\Delta(v)$ and with the entry corresponding to $t, u \in
\link_\Delta(v)_0$ equal to:
\begin{equation*}
(M_{v})_{t,u} = \begin{cases}
\#\{\mbox{edges between $t$ and~$u$ in $\link_\Delta(e)$}\} &
\mbox{if } t \neq u \\
-\alpha(w, e) + 2 \cdot \#\{\mbox{loops at $t$ in $\link_\Delta(e)$}\} &
\mbox{if } t = u,
\end{cases}
\end{equation*}
where, in the second case, $e$ is the edge corresponding to $t=u$ and $w$ is
the endpoint of~$e$ not identified with~$v$.
A weak tropical surface~$\Delta$ is called a \defi{2-dimensional tropical
complex} or \defi{tropical surface} if $M_v$
has exactly one positive eigenvalue for every vertex~$v$ of~$\Delta$.
\end{defn}

A \defi{PL function} on a weak tropical surface is continuous function~$\phi$
such that $\phi$ is piecewise linear with integral slopes on each facet, if we
identify the facet with a unimodular simplex in $\RR^2$. Section~4
of~\cite{cartwright-complexes} introduces a general framework by which a linear
combination of line segments is associated to a PL function on a weak tropical
surface. The simplest case is a piecewise linear function~$\phi$ which is linear
in the ordinary sense on all the simplices of~$\Delta$. Then, the divisor
of~$\phi$ is a linear combination of the edges of~$\Delta$ where the
coefficient of an edge~$e$ with endpoints $v$ and~$v'$ is:
\begin{equation}\label{eq:linear-on-simplices}
-\alpha(v, e) \phi(v) - \alpha(v', e) \phi(v') + \sum_{(f, w) \in
\link_\Delta(e)} \phi(w), 
\end{equation}
where the summation is over facets~$f$ with vertices~$w$ such that the edge
of~$f$ opposite~$w$ is identified with~$e$. See Definition~2.5 and
Proposition~4.8 in~\cite{cartwright-complexes} for the general version of this
theorem. A \defi{linear function} on a weak tropical surface is a PL function
whose divisor is trivial, and we will use the phrase \defi{linear in the
ordinary sense} to distinguish functions which are linear according to the
identification of each $k$-dimensional simplex with a unimodular simplex in
$\RR^k$.

We'll now look at a local version of~(\ref{eq:linear-on-simplices}) for
functions~$\phi$ which are linear on simplices in a neighborhood of a vertex~$v$
of~$\Delta$. Since $\phi$ isn't defined globally, we can't use its values at
vertices like in~(\ref{eq:linear-on-simplices}). Moreover, adding a
constant to~$\phi$ doesn't affect the divisor, so our formula will be in terms of
the slopes of~$\phi$. In particular, we suppose there are $r$ vertices in
$\link_\Delta(v)$, which we order.
We record the slopes of $\phi$ in a vector $\bff \in \ZZ^r$ whose $i$th
entry~$\bff_i$ is the slope along the $i$th incidence of an edge~$e$ of~$\Delta$
to~$v$, where $e$ is taken to have length~$1$. If $\phi$ were a globally defined function, then $\bff_i$ would be
$\phi(w) - \phi(v)$
where $w$ is the other endpoint other than~$v$ of the edge corresponding to the
$i$th vertex of $\link_\Delta(v)$. We can also record the multiplicity of
$\div(\phi)$ along each edge incident to~$v$ in a vector of the same size
as~$\bff$. Then,
\begin{lem}\label{lem:local-intersection-matrix}
Let $\phi$ be a function on a neighborhood of~$v \in \Delta$, which is linear in
the ordinary sense on each simplex, and let $\bff$ encode the slopes of~$\phi$
as above. Then the coefficients of $\div(\phi)$ in a
neighborhood of~$v$ are given by $M_v \bff$, where $M_v$ is the local
intersection matrix at~$v$.
\end{lem}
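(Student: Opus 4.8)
The plan is to derive the local formula from the global formula~(\ref{eq:linear-on-simplices}) for the divisor of a function which is linear in the ordinary sense on every simplex. Since the construction of $\div(\phi)$ in~\cite{cartwright-complexes} is local, the coefficient of an edge~$e$ incident to~$v$ is still computed by~(\ref{eq:linear-on-simplices}), and every vertex occurring there --- the two endpoints of~$e$ and the vertex opposite~$e$ in each facet containing~$e$ --- lies in the neighborhood of~$v$ on which $\phi$ is defined. So I would simply evaluate~(\ref{eq:linear-on-simplices}) at~$e$ and rewrite it in terms of the slope vector~$\bff$. (One could instead extend $\phi$ to a globally defined linear-on-simplices function, but invoking locality avoids any modification of~$\Delta$.)

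Fix a vertex $t \in \link_\Delta(v)_0$, let $e$ be the corresponding edge, and let $w$ be the endpoint of~$e$ not identified with~$v$. Every vertex appearing in~(\ref{eq:linear-on-simplices}) for the edge~$e$ is adjacent to~$v$: the endpoint~$w$ via~$e$ itself, and the vertex opposite~$e$ in a facet $f \ni e$ via the second edge of~$f$ at~$v$. Since $\phi$ is linear in the ordinary sense along each such edge, the value of~$\phi$ at such a vertex equals $\phi(v) + \bff_u$, where $u \in \link_\Delta(v)_0$ indexes the relevant edge; in particular $\phi(w) = \phi(v) + \bff_t$. Substituting these expressions into~(\ref{eq:linear-on-simplices}) writes the coefficient of~$e$ as an integer combination of $\phi(v)$ and the entries of~$\bff$, and the lemma amounts to the claim that the coefficient of~$\phi(v)$ is zero while the $\bff$-part is the $t$th row of~$M_v$.

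For the $\bff$-part, I would match the three terms of~(\ref{eq:linear-on-simplices}) against the definition of~$M_v$: the term $-\alpha(w,e)\phi(w)$ contributes $-\alpha(w,e)\bff_t$, supplying the $-\alpha(w,e)$ in the diagonal entry $(M_v)_{t,t}$; a facet $f \ni e$ whose other edge at~$v$ is indexed by some $u \neq t$ contributes its opposite vertex, whose slope coordinate is $\bff_u$, matching the off-diagonal entry $(M_v)_{t,u}$ that counts such facets as edges of $\link_\Delta(v)$ joining~$t$ to~$u$; and a facet that is a loop at~$t$ in $\link_\Delta(v)$ has two of its edges identified with~$e$, so it appears twice among the facets containing~$e$, each time with opposite vertex~$w$, contributing $2\bff_t$ and accounting for the $2 \cdot \#\{\mbox{loops at }t\}$ in $(M_v)_{t,t}$. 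For the coefficient of~$\phi(v)$, collecting all these contributions shows it equals $-\alpha(v,e) - \alpha(w,e)$ plus the degree of~$t$ in the graph $\link_\Delta(v)$; since the edges of $\link_\Delta(v)$ at~$t$ correspond to the facets containing~$e$, that degree is $\deg(e)$, so the constraint~(\ref{eq:constraint}) makes the coefficient of~$\phi(v)$ vanish. This vanishing is the expected reflection of the fact that $\div(\phi)$ is unchanged when a constant is added to~$\phi$, and together with the previous paragraph it gives the lemma row by row.

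I expect the only genuine difficulty to be the bookkeeping forced by the identifications permitted in a $\Delta$-complex: facets with a repeated edge, which are responsible for the factor of~$2$; the precise correspondence between the edges of $\link_\Delta(v)$ at~$t$ and the elements of $\link_\Delta(e)$, so that each facet is counted with the same multiplicity on both sides; and the most degenerate configurations, such as an edge that is a loop at~$v$ or a facet all of whose vertices are identified. Once these multiplicities are handled, what remains is a direct substitution.
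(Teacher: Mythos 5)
Your argument is correct, but it is not the paper's argument: the paper's entire proof is a citation to Proposition~4.6 of the tropical complexes paper \cite{cartwright-complexes}, whereas you give a self-contained verification starting from the displayed formula~(\ref{eq:linear-on-simplices}). Your computation is the right one: substituting $\phi(w) = \phi(v) + \bff_t$ and $\phi(u') = \phi(v) + \bff_u$ into~(\ref{eq:linear-on-simplices}) makes the coefficient of $\phi(v)$ equal to $-\alpha(v,e) - \alpha(w,e) + \deg(e) = 0$ by~(\ref{eq:constraint}), and the surviving linear form in~$\bff$ is exactly the $t$th row of~$M_v$, with facets giving loops at~$t$ correctly appearing twice in $\link_\Delta(e)$ and hence contributing the factor of~$2$ in the diagonal entry. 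What your route buys is transparency --- it makes visible why the constraint~(\ref{eq:constraint}) is precisely the condition for $\div(\phi)$ to be insensitive to adding a constant to~$\phi$, and it exposes the identification bookkeeping that the citation hides; what it costs is that you are in effect reproving the cited proposition, and you must handle the degenerate $\Delta$-complex configurations you flag at the end rather than inheriting them from the reference. One small caveat: your opening claim that every vertex occurring in~(\ref{eq:linear-on-simplices}) lies in the neighborhood on which $\phi$ is defined need not hold for a small neighborhood of~$v$; the formula should be read via the linear extensions of~$\phi$ to each simplex, whose slopes are what $\bff$ records, so your substitution is still legitimate but should be phrased in terms of those extensions rather than literal values of~$\phi$ at distant vertices.
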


\begin{proof}
This follows from Proposition~4.6 in \cite{cartwright-complexes}.
\end{proof}

We now wish to generalize the matrix~$M_v$ and
Lemma~\ref{lem:local-intersection-matrix} to functions which are only piecewise
linear on the simplices of~$\Delta$ and to points other than vertices. To do so,
we need a finer decomposition of the neighborhood of a point than the one given
by the simplices of~$\Delta$, for which we have the following definition.

\begin{defn}\label{def:local-cone-complex}
Let $p$ be a point of weak tropical complex~$\Delta$ and then a
\defi{local cone complex} of~$\Delta$ at~$p$ is a subdivision of a neighborhood
of~$p$ into unimodular cones, in the following sense. For any facet~~$f$
containing~$p$, we identify a neighborhood of~$p$ in~$f$ with a neighborhood of
the origin in the cones $\RR_{\geq 0}^2$, $\RR \times \RR_{\geq 0}$, or $\RR^2$
if $p$ is a vertex, contained in the interior of an edge, or contained in the
interior of a facet, respectively. Moreover, we assume that the identification
preserves the integral structure, meaning that the differences between the
vertices of~$f$ generate the lattice of integral vectors in $\RR^2$. We then
subdivide the cone $\RR_{\geq 0}^2$, $\RR \times \RR_{\geq 0}$, or $\RR^2$ into
any fan with pointed unimodular cones. The local cone complex is formed by
gluing the cones for each facet containing $p$ along their common edges, so that
a neighborhood of~$p$ in~$\Delta$ is identified with a neighborhood of the
origin in the cone complex.
\end{defn}

\begin{figure}
\includegraphics{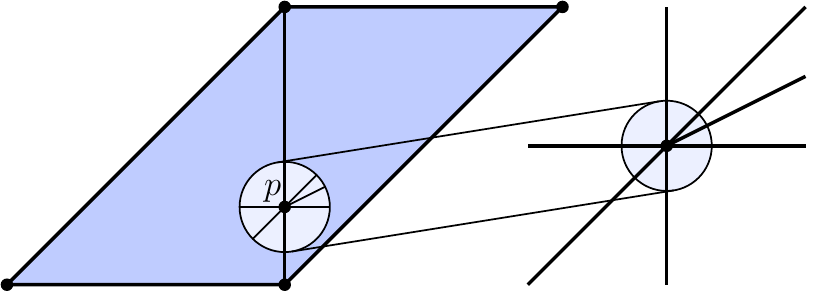}
\caption{An example of a local cone complex for a point~$p$ on the boundary
between two facets. The local cone complex consists of the union of the seven
$2$-dimensional cones at right which subdivide the two facets of the simplicial
complex.}\label{fig:local-cones}
\end{figure}

\begin{ex}\label{ex:local-cones}
Figure~\ref{fig:local-cones} shows an example of the local cone complex on a
tropical complex consisting of two facets. The two facets containing~$p$ are
divided into three and four $2$-dimensional cones, respectively, and this
subdivision passes to a neighborhood of the point in the tropical complex.
\end{ex}

\begin{lem}\label{lem:local-cone-complex}
If $D$ is any set of edges with rational slopes on a weak tropical
surface~$\Delta$, then at any point $p \in \Delta$, there exists a local cone
complex~$\Sigma$ such that in a neighborhood of~$p$, $D$ is supported on the
rays of~$\Sigma$.
\end{lem}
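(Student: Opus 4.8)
The plan is to build $\Sigma$ one facet at a time and then check that the pieces glue along the edges of $\Delta$ through $p$. Fix $p\in\Delta$, and for each facet $f$ containing $p$ fix an identification of a neighborhood of $p$ in $f$ with a neighborhood of the origin in a cone $C_f$, one of $\RR_{\geq 0}^2$, $\RR\times\RR_{\geq 0}$, or $\RR^2$, as in Definition~\ref{def:local-cone-complex}, chosen compatibly along common edges and preserving the integral structures. After shrinking the neighborhood of $p$ we may discard every segment of $D$ not passing through $p$; what remains, intersected with $f$, is a finite union of rays from the origin with primitive rational generators (and likewise on each edge of $\Delta$ through $p$, since $D$ has rational slopes). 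Let $R_f\subseteq C_f$ be this finite set of primitive generators coming from $D$.

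Next I record the rays that are forced onto the boundary. If $p$ is a vertex, the two boundary rays of $\RR_{\geq 0}^2=C_f$ are segments of edges of $\Delta$ through $p$, and their primitive generators must be rays of $\Sigma$; if $p$ lies in the interior of an edge $e$, then $C_f=\RR\times\RR_{\geq 0}$ and the boundary line $\RR\times\{0\}$ is a segment of $e$, which in a pointed fan must be cut into its two primitive half-rays; if $p$ is interior to a facet there is no boundary. For each edge $e$ of $\Delta$ through $p$, let $S_e$ consist of these forced boundary generators together with the generators of the rays of $R_f$ lying along $e$; because the chosen identifications are compatible and unimodular, $S_e$ is well defined independently of which facet $f\supseteq e$ one uses. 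Declare once and for all that in $\Sigma$ the edge $e$ near $p$ is subdivided into exactly the rays generated by $S_e$. This fixes the subdivision along every $1$-dimensional stratum through $p$.

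It remains to extend this prescribed boundary subdivision to a unimodular pointed fan $\Sigma_f$ subdividing each $C_f$ and containing all of $R_f$. I would do this in two elementary steps. First, form the coarse fan in $C_f$ whose rays are the prescribed boundary rays together with the interior elements of $R_f$; its maximal cones are two-dimensional. Second, subdivide each such two-dimensional rational cone $\langle a,b\rangle$ into unimodular cones \emph{without adding rays to its $1$-dimensional faces}: this is the classical continued-fraction (Stern--Brocot) subdivision, which successively inserts an interior generator of the form $a+kb$ and terminates with cones of determinant $\pm 1$, never requiring further subdivision of $\langle a\rangle$ or $\langle b\rangle$. The fans $\Sigma_f$ then agree on shared edges by construction, so they glue to a local cone complex $\Sigma$ of $\Delta$ at $p$; and since every segment of $D$ through $p$ was included among the rays, facet by facet and along every edge, $D$ is supported on the rays of $\Sigma$ in a neighborhood of $p$.

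The only genuine point to watch is the compatibility along the edges of $\Delta$ through $p$: a priori the unimodular refinement inside one facet could force extra rays onto a shared edge and fail to match the refinement in a neighboring facet. This is exactly why the boundary subdivision is fixed globally first, and why the two-dimensional unimodular refinement in the last step is performed relative to that boundary and without altering it; with those provisos the gluing is automatic. The remaining cases ($p$ interior to an edge, where one only needs the boundary line split into its two primitive rays, or $p$ interior to a facet, where one simply unimodularly subdivides $\RR^2$) are handled the same way and are if anything easier.
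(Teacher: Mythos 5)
Your argument is correct and follows essentially the same route as the paper: the rays of $D$ through $p$ subdivide each $2$-dimensional cone into rational cones, and one then refines to a unimodular fan by the standard toric resolution of singularities (continued fractions in dimension~$2$). The gluing compatibility along the edges of~$\Delta$ through~$p$ that you verify explicitly is left implicit in the paper, and is indeed automatic for the reason you give: the unimodular refinement only inserts rays interior to each $2$-dimensional cone and never subdivides its $1$-dimensional faces.
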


\begin{proof}
In a neighborhood of~$p$, on each $2$-simplex, the edges of~$D$ define a
subdivision of a $2$-dimensional cone, either $\RR^2_{\geq 0}$, $\RR_{\geq 0} \times
\RR$, or $\RR^2$, into
rational cones. Thus, all that's left to do is to refine this subdivision into
one that is unimodular, which is combinatorially equivalent to toric resolution
of singularities, and toric resolutions of singularities always
exist~\cite[Sec.~2.6]{fulton-toric}.
\end{proof}

An important case for Lemma~\ref{lem:local-cone-complex} is when $D$ is the
divisor associated to a PL function~$\phi$. Then, $\phi$ will be linear in a
neighborhood of the origin on each cone of the local cone complex~$\Sigma$, 
and so we can extend~$\phi$ by linearity to a function on the whole cone
complex, which we also denote by $\phi$. Now let $e_1, \ldots, e_r$ denote the
rays of $\Sigma$. We can record the slopes of~$\phi$ in a
vector~$\bff \in \ZZ^r$, where the $i$th entry, corresponding to a ray~$e_i$, is
$\phi(v_i) - \phi(0)$, where $v_i$ is the minimal integral vector along $e_i$ in
the local cone complex. The divisor of~$\phi$ is supported on the rays of the
cone complex, so we can also record its coefficients as a vector in $\ZZ^r$
whose entries correspond to the multiplicity along a given ray.

If we identify with $\ZZ^r$ with the group of all PL functions which are linear
on the cones of~$\Sigma$, then we have a linear map $\ZZ^r
\rightarrow \ZZ^r$ recording the encoding of divisors of the PL function. We
will refer to the matrix representation of of this map as $M_{p,\Sigma}$, which
will serve as a generalization of the local intersection matrix~$M_v$.

\begin{lem}\label{lem:link}
The off-diagonal entries of~$M_{p,\Sigma}$ coincide with the adjacency matrix of
$\Sigma$, meaning that if $i \neq j$, the $(i,j)$-entry of $M_{p,\Sigma}$
records the number of cones containing both rays $e_i$ and~$e_j$.
\end{lem}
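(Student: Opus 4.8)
The plan is to reduce the claim to a local computation on a single 2-dimensional cone of $\Sigma$, using the explicit formula~(\ref{eq:linear-on-simplices}) for the divisor of a function linear in the ordinary sense. The key point is that the $(i,j)$-entry of $M_{p,\Sigma}$ with $i \neq j$ measures how a unit change in the slope $\bff_j$ along the ray $e_j$ affects the coefficient of $\div(\phi)$ along the ray $e_i$; by linearity of the map $\ZZ^r \to \ZZ^r$, it suffices to take $\phi$ with $\bff_j = 1$ and all other slopes zero, and read off the coefficient along $e_i$.

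First I would fix two distinct rays $e_i$ and $e_j$ of $\Sigma$ and analyze the contribution to the coefficient of $\div(\phi)$ along $e_i$. After choosing a local cone complex refining $\Delta$ near $p$ as in Definition~\ref{def:local-cone-complex}, every 2-dimensional cone of $\Sigma$ is unimodular, so its two boundary rays form a lattice basis; applying~(\ref{eq:linear-on-simplices}) (in its local form, or equivalently Lemma~\ref{lem:local-intersection-matrix} applied on the refined cone complex) the coefficient of $\div(\phi)$ along $e_i$ is a sum of terms indexed by the cones containing $e_i$. In each such cone $\sigma$, the relevant term is $\phi$ evaluated at the vertices of $\sigma$ opposite $e_i$ — that is, at the primitive generator of the \emph{other} ray of $\sigma$ — since the $\alpha$-weighted terms at the two endpoints of $e_i$ and the "opposite vertex" summands of~(\ref{eq:linear-on-simplices}) rearrange into this form. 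Thus the ray $e_j$ contributes to the $e_i$-coefficient exactly once for each 2-dimensional cone of $\Sigma$ containing both $e_i$ and $e_j$, and each such contribution is $+\bff_j$. Taking $\bff_j = 1$, this gives $(M_{p,\Sigma})_{i,j} = \#\{\text{cones of }\Sigma\text{ containing both }e_i\text{ and }e_j\}$, which is the adjacency-matrix statement. One subtlety to address is whether $\Sigma$ is genuinely simplicial in the relevant sense so that "opposite vertex" is well-defined: a unimodular pointed cone in $\RR^2$ (or in the boundary cones $\RR\times\RR_{\geq 0}$, $\RR^2_{\geq 0}$) always has exactly two rays, so each 2-dimensional cone $\sigma$ containing $e_i$ has a unique second ray, and the bookkeeping is unambiguous.

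I expect the main obstacle to be carefully matching the sign conventions and the $\alpha$-terms in~(\ref{eq:linear-on-simplices}) to the local cone complex, in particular handling the cones that lie along the boundary of the neighborhood (the cases $\RR\times\RR_{\geq 0}$ and $\RR^2_{\geq 0}$ in Definition~\ref{def:local-cone-complex}) where a ray $e_i$ may be contained in only one 2-dimensional cone. Since the statement only concerns off-diagonal entries, these boundary effects influence only the diagonal of $M_{p,\Sigma}$ (through the $\alpha$-terms, exactly as in the definition of $M_v$), so they do not affect the claim; but one should remark that the off-diagonal entries are manifestly symmetric and nonnegative, consistent with their interpretation as the adjacency matrix of $\Sigma$. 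The diagonal entries, together with the precise relationship to $M_v$ when $p$ is a vertex and $\Sigma$ is the unsubdivided star, will be treated separately in the next lemma.
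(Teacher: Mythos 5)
Your proposal is correct and follows essentially the same route as the paper: both compute the off-diagonal entry by feeding in the basis PL function that is $1$ on one ray and $0$ on the others, and observing that each unimodular $2$-dimensional cone containing both rays contributes exactly $1$ to the multiplicity along the other ray, with the $\alpha$-type terms affecting only the diagonal. The only cosmetic difference is that the paper justifies the ``$1$ per shared cone'' step by writing the function as $\max\{x,0\}$ in unimodular coordinates and citing the local model for divisors of PL functions, rather than appealing to the formula~(\ref{eq:linear-on-simplices}).
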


\begin{proof}
We let $e_i$ and $e_j$ be two rays of $\Sigma$ and let $\sigma$ be a cone
containing both of them. Since $\sigma$ is unimodular, we can choose coordinates
on~$\sigma$ such that $e_i = (1,0)$ and $e_j = (0,1)$ and because the integral
structure on~$\sigma$ agrees with that on the facets of~$\Delta$, we can use the
same coordinates the corresponding region of~$\Delta$. Then, the PL
function~$\phi_i$ which is~$1$ on $e_i$ and $0$ on the other rays can be defined
by the $x$ coordinate on~$\sigma$. Thus, along~$e_j$, $\phi_i$ is defined by
$\max\{x, 0\}$, so by the construction of the divisor of a PL function
\cite[Prop.~4.5(iv)]{cartwright-complexes}, the contribution of~$\sigma$ to the
multiplicity of~$\phi_i$ along $e_j$ is~$1$. By linearity of
multiplicities~\cite[Prop.~4.5(i)]{cartwright-complexes}, the total multiplicity
of~$\phi_i$ along~$e_j$ is the number of such cones~$\sigma$, which is what we
wanted to show.
\end{proof}

\begin{lem}\label{lem:local-matrix}
If $\Delta$ is a tropical surface, then $M_{p,\Sigma}$ has exactly one positive
eigenvalue.
\end{lem}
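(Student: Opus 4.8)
The plan is to reduce the statement, at each point $p$, to a single conveniently chosen local cone complex, and then to check three explicit local models. Two general facts will be used freely: $M_{p,\Sigma}$ is symmetric (it is a Gram matrix for the intersection pairing restricted to the rays of $\Sigma$), and the divisor $\div(\phi)$ of a PL function $\phi$ is intrinsic to $\Delta$ --- a cone complex on which $\phi$ happens to be linear only serves to express $\div(\phi)$ as a combination of edges, so $\div(\phi)$ is unchanged, and in particular still supported on the old rays, if one passes to a refinement.

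First I would show that the number of positive eigenvalues of $M_{p,\Sigma}$ is independent of $\Sigma$. Any two local cone complexes at $p$ admit a common refinement (take the common refinement of the underlying fans and resolve it to be unimodular as in Lemma~\ref{lem:local-cone-complex}); since all maximal cones are $2$-dimensional, this refinement is reached from each of the two by a sequence of star subdivisions, each inserting the ray $e_0$ through the sum of the generators of a cone $\sigma$ (cf.~\cite[Sec.~2.6]{fulton-toric}). So it is enough to compare $M_{p,\Sigma}$ with $M_{p,\Sigma'}$ for one star subdivision. Let $\rho$ be the map carrying a PL function linear on $\Sigma$ to its slope vector on the rays of $\Sigma'$ (its slope along $e_0$ being the sum of its two slopes on $\sigma$), and let $\psi_0$ be the PL function equal to $1$ on $e_0$ and $0$ on the other rays of $\Sigma'$. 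Because $\div(\phi)$ is supported on the old rays when $\phi$ lies in the image of $\rho$, the restriction of the form $M_{p,\Sigma'}$ to the image of $\rho$ agrees, via $\rho$, with $M_{p,\Sigma}$, and is orthogonal to $\psi_0$; and on $\sigma$ the function $\psi_0$ is the minimum of the two coordinate functions, so it has a concave corner along $e_0$ and $(M_{p,\Sigma'})_{e_0,e_0}=-1$. Since the image of $\rho$ together with $\RR\psi_0$ spans everything and intersects $\RR\psi_0$ only in $0$, $M_{p,\Sigma'}$ is an orthogonal direct sum of $M_{p,\Sigma}$ and $(-1)$, hence has the same number of positive eigenvalues.

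With that reduction in hand I would treat three cases. If $p$ is a vertex $v$, take $\Sigma$ to keep each facet at $v$ as a single pointed unimodular cone; then $M_{p,\Sigma}=M_v$ by Lemma~\ref{lem:local-intersection-matrix}, and $M_v$ has exactly one positive eigenvalue because $\Delta$ is a tropical surface. If $p$ is in the interior of a facet, take $\Sigma$ to be the fan of $\PP^2$ (three rays, any two spanning a cone); by Lemma~\ref{lem:link} the off-diagonal entries are all $1$, an analogous local corner computation gives diagonal entries $1$, so $M_{p,\Sigma}$ is the $3\times 3$ all-ones matrix with eigenvalues $3,0,0$. Finally, if $p$ is in the interior of an edge $e$ with endpoints $v,v'$ and $k=\deg(e)$ (we may assume $e$ is a face of a facet, so $k\ge 1$; a point whose neighborhood in $\Delta$ is at most $1$-dimensional is treated separately), take $\Sigma$ with two rays $e_+,e_-$ along $e$ in the two directions and one ray $d_i$ into each facet at $e$, with cones $\langle e_\pm,d_i\rangle$. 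Lemma~\ref{lem:link} gives off-diagonal entries $(M_{p,\Sigma})_{e_\pm,d_i}=1$ and all others $0$; the function that is $1$ on $d_i$ and $0$ elsewhere is linear near $d_i$, so $(M_{p,\Sigma})_{d_i,d_i}=0$; and the divisor formula along the edge $e_\pm$ (as in Lemma~\ref{lem:local-intersection-matrix}, with $\alpha$ inherited from $e$) gives diagonal entry $-\alpha(v',e)$ along $e_+$ and $-\alpha(v,e)$ along $e_-$, where $\alpha(v,e)+\alpha(v',e)=k$. Thus $M_{p,\Sigma}$ has a radical of dimension $k-1$ spanned by differences of the $d_i$-vectors, and on a complement it is represented by a symmetric $3\times 3$ matrix of determinant $k^2\bigl(\alpha(v,e)+\alpha(v',e)\bigr)=k^3>0$ and trace $-k<0$; its real eigenvalues therefore have positive product and negative sum, so exactly one is positive. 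Together with the first step this completes the proof.

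The part I expect to be most delicate is this last, edge-interior computation: choosing the right cone complex, computing the diagonal entries with the correct signs (the ones inherited from $\alpha(v,e)$ and $\alpha(v',e)$ through the divisor-of-a-PL-function formula along an edge), and verifying that the radical is exactly $(k-1)$-dimensional so that the $3\times 3$ reduction is legitimate. More generally, every sign in the divisor construction must be tracked --- including the identity $(M_{p,\Sigma'})_{e_0,e_0}=-1$ above --- since a single global sign error would turn ``one positive eigenvalue'' into ``one negative eigenvalue''. A minor loose end is to dispose of points whose neighborhood in $\Delta$ carries no $2$-dimensional local cone complex at all.
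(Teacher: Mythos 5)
Your proof is correct and follows essentially the same strategy as the paper: reduce to a single convenient $\Sigma$ at each point by showing invariance under elementary star subdivisions (your orthogonal-direct-sum decomposition $M_{p,\Sigma'}\cong M_{p,\Sigma}\oplus(-1)$ is exactly the congruence the paper writes out as an explicit change of basis), and then check the three local models. The only genuine divergence is in the edge-interior case: the paper leaves the diagonal entries along the two half-edges undetermined, counts the kernel of $M_{p,\Sigma}$ as $d$-dimensional using the rank of the group of local linear functions from \cite[Const.~4.2]{cartwright-complexes}, and concludes from an indefinite $2\times 2$ submatrix that the two nonzero eigenvalues have opposite signs; you instead compute those diagonal entries explicitly as $-\alpha(v',e)$ and $-\alpha(v,e)$ and run a determinant/trace argument on the $3\times 3$ quotient by the (visible part of the) radical. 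Both are valid --- yours is more self-contained in not needing the rank of the sheaf of linear functions, at the price of tracking the structure constants through the subdivision of $e$ at $p$; and note that your worry about the radical being exactly $(k-1)$-dimensional is moot, since a subspace merely contained in the radical already suffices to identify the signature with that of the form on any complement, and your nonvanishing determinant then shows equality after the fact. The degenerate case you defer (an edge contained in no facet, where $k=0$ and the determinant argument gives nothing) is likewise not addressed by the paper's proof, so it is not a gap relative to the paper.
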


\begin{proof}
We first show that for any point~$p$, there exists some local cone
complex~$\Sigma$ for which the lemma is true and then we'll show that the
statement is independent of the choice of~$\Sigma$. If $p$ is a vertex
of~$\Delta$, then we can choose $\Sigma$ induced by the facets containing~$p$,
in which case $M_{p,\Sigma} = M_{p}$ by
Lemma~\ref{lem:local-intersection-matrix} and $M_p$ has exactly one positive
eigenvalue by the definition of a tropical complex.

If $p$ is in the interior of an edge, then for each facet containing $p$, we
choose to subdivide $\RR_{\geq 0} \times \RR$ by adding a single ray, say $e_i =
\RR_{\geq 0} \times \{0\}$ in some coordinates. The first coordinate function on
$\RR_{\geq 0} \times \RR$ is
$1$ on the integral point $(1,0)$ of the ray~$e_i$, but vanishes on the other
rays. This function is linear on $\RR_{\geq 0} \times \RR$, so its multiplicity
along $e_i$ is $0$, so the $i$th diagonal entry of $M_{p,\Sigma}$ is $0$.
Moreover, Lemma~\ref{lem:link} gives us the off-diagonal entries of
$M_{p,\Sigma}$. Therefore, if we assume that the rays of~$\Sigma$ are ordered
with the subdividing rays first, we have:
\begin{equation}\label{eq:edge-local-matrix}
M_{p,\Sigma} = \begin{pmatrix}
0 & \cdots & 0 & 1 & 1 \\
\vdots & & \vdots & \vdots & \vdots \\
0 & \cdots & \mathbf{0} & \mathbf{1} & 1 \\
1 & \cdots & \mathbf{1} & \mathbf{a} & 0 \\
1 & \cdots & 1 & 0 & b
\end{pmatrix},
\end{equation}
for some values of $a$ and $b$. By the construction of linear
functions~\cite[Const.~4.2]{cartwright-complexes}, linear functions in a
neighborhood of~$p$ form a free group of rank $d$, which denotes the cardinality
of $\link_\Delta(e)$, so
$M_{p,\Sigma}$ has kernel of dimension~$d$ and so only $2$ non-zero eigenvalues.
On the other hand, the bold $2 \times 2$ submatrix
of~(\ref{eq:edge-local-matrix}) is not semidefinite for any value of~$a$, so the
$2$ non-zero eigenvalues must have opposite sign. Therefore, $M_{p,\Sigma}$ has
exactly one positive eigenvalue.

If $p$ is in the interior of a facet, we choose
$\Sigma$ by dividing $\RR^2$ along the rays generated by $(1,0)$, $(0,1)$,
and~$(-1,-1)$. Then one can compute
\begin{equation*}
M_{p,\Sigma} =
\begin{pmatrix} 1 & 1 & 1 \\ 1 & 1 & 1 \\ 1 & 1 & 1 \end{pmatrix},
\end{equation*}
which has exactly one positive eigenvalue.

Now, it only remains to show that the number of positive eigenvalues of
$M_{p,\Sigma}$ is independent of the local cone complex. Any two cone
complexes~$\Sigma$ and~$\Sigma'$ have a common refinement~$\Sigma''$, which
simultaneously gives a unimodular subdivision of each cone of~$\Sigma$ and of
each cone of~$\Sigma'$. By the theory of minimal models for toric varieties,
$\Sigma''$ can be formed from $\Sigma$ by a series of subdivisions, meaning
replacing a cone spanned by minimal integral rays $v$ and~$w$ with two cones,
one spanned by $v$ and $v+w$ and the other spanned by $v+w$ and~$w$. Thus, it
suffices to prove that if $\Sigma'$ is a subdivision of~$\Sigma$, then
$M_{p,\Sigma}$ and $M_{p,\Sigma'}$ have the same number of positive eigenvalues.

We can order the rays of $\Sigma$ such that $v$ and~$w$ are last, and then we
assign the following variables to the lower right corner of~$M_{p,\Sigma}$:
\begin{equation*}
M_{p,\Sigma} = 
\begin{pmatrix}
* & * & * \\
* & a_v & b \\
* & b & a_w
\end{pmatrix}.
\end{equation*}
For $\Sigma'$, we put the new ray $v + w$ last and then we claim that:
\begin{equation*}
M_{p,\Sigma'} = 
\begin{pmatrix}
* & * & * & 0 \\
* & a_v - 1 & b - 1 & 1 \\
* & b-1 & a_w - 1 & 1 \\
0 & 1 & 1 & -1
\end{pmatrix}.
\end{equation*}
The off-diagonal entries of $M_{p,\Sigma'}$ are justified by
Lemma~\ref{lem:link}. The diagonal entries can be justified by
considering the PL functions which are linear on $\Sigma$ corresponding to the
vectors $(0, \ldots, 1, 0)$ and $(0, \ldots, 0, 1)$. When we look at either of
these functions on~$\Sigma'$, the their value at~$v+w$ is~$1$, so the encodings
of these functions on~$\Sigma'$ are $(0, \ldots, 1, 0, 1)$ and $(0, \ldots, 0,
1, 1)$. Thus, 
\begin{align*}
M_{p,\Sigma'} (0, \ldots, 1, 0, 1)^T &= (0, \ldots, a_v, b, 0) \\
M_{p,\Sigma'} (0, \ldots, 0, 1, 1)^T &= (0, \ldots, b, a_w, 0),
\end{align*}
which determines the diagonal entries.

Now, we apply the following change of variables:
\begin{equation*}
\begin{pmatrix}
I & 0 & 0 & 0 \\
0 & 1 & 0 & 1 \\
0 & 0 & 1 & 1 \\
0 & 0 & 0 & 1 \\
\end{pmatrix} M_p' \begin{pmatrix}
I & 0 & 0 & 0 \\
0 & 1 & 0 & 0 \\
0 & 0 & 1 & 0 \\
0 & 1 & 1 & 1
\end{pmatrix} = \begin{pmatrix}
* & * & * & 0 \\
* & a_w & b & 0 \\
* & b & a_u & 0 \\
0 & 0 & 0 & -1
\end{pmatrix},
\end{equation*}
to get a block diagonal matrix where the upper left block is~$M_{p,\Sigma}$
and whose lower right block is a negative entry.
Thus, $M_{p,\Sigma}$ and $M_{p,\Sigma'}$ have the same number positive
eigenvalues, which completes the proof of the lemma.
\end{proof}

We now turn to applications of the matrix $M_{p,\Sigma}$.
First, we recall that Section~4 of~\cite{cartwright-complexes} defined a hierarchy of different types
of divisors, all of which are integral sums of line segments on a tropical
surface. Cartier divisors are locally defined by a PL function, $\QQ$-Cartier
divisors are those such that some multiple is Cartier, and Weil divisors are
$\QQ$-Cartier except for a set of codimension at least~$3$. Thus, on a weak
tropical surface, Weil divisors coincide with $\QQ$-Cartier divisors and so we
will call them just \defi{divisors}. These divisors can be defined by
\defi{rational PL functions} which are functions such that some positive
multiple is a PL function. In addition, tropical complexes have
curves, which are formal sums of line segments such that the restriction of an
affine linear function is affine linear~\cite[Def.~5.2]{cartwright-complexes}.
However, on surfaces, curves and divisors have the same dimension and the
$\QQ$-Cartier condition on divisors coincides with the balancing condition on
curves.

\begin{prop}\label{prop:curve-divisor}
On a weak tropical surface, a formal sum of segments is a curve if and only if
it is a divisor.
\end{prop}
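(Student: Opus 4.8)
The plan is to verify the equivalence locally: both conditions—being a curve and being a divisor—are local at each point $p \in \Delta$, so it suffices to show that a formal sum $C$ of segments satisfies the balancing condition of \cite[Def.~5.2]{cartwright-complexes} in a neighborhood of $p$ if and only if it satisfies the $\QQ$-Cartier condition there, i.e.\ is locally the divisor of a rational PL function. I would first reduce to the case where $C$ is supported on the rays of a single local cone complex $\Sigma$ at $p$, which is permitted by Lemma~\ref{lem:local-cone-complex}; after this reduction the relevant data on both sides is a vector in $\QQ^r$, where $r$ is the number of rays of $\Sigma$.

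Next I would unwind the two conditions into linear algebra over the matrix $M_{p,\Sigma}$. On one hand, $C$ restricted to a neighborhood of $p$ is a divisor (a $\QQ$-Cartier divisor) exactly when its coefficient vector lies in the image of $M_{p,\Sigma}$ acting on $\QQ^r$, since by construction $M_{p,\Sigma}$ is precisely the map sending a PL function linear on $\Sigma$ to the coefficient vector of its divisor. On the other hand, the balancing condition for a curve says that along each ray $e_i$ of $\Sigma$ the weighted sum of primitive directions of the adjacent segments, together with the cone structure, sums to zero; I would check that this is exactly the condition that the coefficient vector of $C$ is orthogonal to the kernel of $M_{p,\Sigma}$. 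The key point is that the kernel of $M_{p,\Sigma}$ is spanned by (the encodings of) the linear functions on a neighborhood of $p$—functions whose divisor is trivial—and that pairing a candidate curve against such a linear function reproduces exactly the balancing sum at each ray, up to sign. Concretely, using Lemma~\ref{lem:link} for the off-diagonal entries and the local description of the diagonal entries, one sees that $\langle M_{p,\Sigma} \bff, \bff' \rangle$ is symmetric, so $\operatorname{im} M_{p,\Sigma} = (\ker M_{p,\Sigma})^\perp$ over $\QQ$; hence "in the image of $M_{p,\Sigma}$" and "orthogonal to $\ker M_{p,\Sigma}$" are literally the same subspace, giving the equivalence once I match each side to one of these descriptions.

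The main obstacle I expect is the symmetry of $M_{p,\Sigma}$, or rather the precise bookkeeping that makes the balancing condition coincide with orthogonality to the kernel: the diagonal entries of $M_{p,\Sigma}$ involve the $\alpha$-values and loop counts in the link, while the balancing condition is stated in terms of primitive integral directions along segments, and one must check these two combinatorial descriptions agree ray-by-ray and that the identification is lattice-compatible (this is where unimodularity of the cones of $\Sigma$, built into Definition~\ref{def:local-cone-complex}, is used). Once symmetry is in hand, the argument is formal. I would close by noting that the local equivalence globalizes immediately, since both "curve" and "divisor" are defined by conditions imposed at every point, so a formal sum of segments is a curve on all of $\Delta$ if and only if it is a divisor on all of $\Delta$.
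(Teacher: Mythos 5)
Your proposal follows essentially the same route as the paper's proof: reduce to a local cone complex via Lemma~\ref{lem:local-cone-complex}, identify divisors with the image of $M_{p,\Sigma}$ and curves with the orthogonal complement of its kernel, and conclude by the symmetry of $M_{p,\Sigma}$ (which, as you suspect, is supplied by Lemma~\ref{lem:link} for the off-diagonal entries). The argument is correct and matches the paper's.
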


\begin{proof}
Let $C$ be a formal sum of segments in~$\Delta$. Since the conditions for being
a curve and a divisor are both local, it suffices to check that they are
equivalent in a neighborhood of a point~$p$. By
Lemma~\ref{lem:local-cone-complex}, we can find a local cone complex~$\Sigma$
at~$p$ such that $C$ is supported on the rays of the subdivision. We represent
the coefficients of~$C$ by a vector~$\mathbf c$ whose entries are indexed by
these rays. Supposing that there exists a rational PL function~$\phi$ defining
$C$, then $\phi$ will be linear on each cone of~$\Sigma$. Let $\bff$ be the rational vector
recording the slope of~$\phi$ along each of the rays so that $\mathbf c =
M_{p,\Sigma} \bff$. Thus, $C$ is a divisor in a neighborhood of~$p$ if and only
if the coefficient vector~$\mathbf c$ is in the image of the matrix $M_p$.

Likewise, a PL function~$\phi$ is linear in a neighborhood of~$p$ if and only if
the corresponding vector~$\bff$ is in the kernel of~$M_{p,\Sigma}$. If so, then
the degree of $\phi$ restricted to~$C$ at~$p$ is given by the slopes of~$\phi$
along the edges of~$C$, which is computed by the dot product $\bff \cdot \mathbf
c$. Thus, $C$ is a curve if and only if for every vector~$\bff$ in the kernel
of~$M_{p,\Sigma}$, the vector~$\mathbf c$ is orthogonal to~$\bff$. The kernel of
a matrix is the orthogonal complement of its rows, so the orthogonal complement
of the kernel of~$M_{p,\Sigma}$ is the image of the transpose $M_{p,\Sigma}^T$.
However, $M_{p,\Sigma}$ is symmetric, so $C$ is a curve if and only if $\mathbf
c$ is in the image of~$M_{p,\Sigma}$, which we've already shown to be equivalent
to $C$ being a divisor.
\end{proof}

Since every divisor is a curve and vice versa, the intersection of two divisors
can be computed two different ways, depending on which is considered as a
divisor and which as a curve. However, these two methods produce the same
result.

\begin{prop}\label{prop:intersection-sym}
The intersection product on a weak tropical surface is symmetric.
\end{prop}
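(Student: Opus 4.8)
The plan is to reduce the symmetry of the global intersection product to a local computation at each point, using the machinery already developed. Since the intersection product of two divisors $C$ and $D$ is computed by summing local contributions over the points of $C \cap D$ (which is a finite set of points when $C$ and $D$ share no common component), it suffices to show that at each point $p$, the local contribution is symmetric in $C$ and $D$. By Lemma~\ref{lem:local-cone-complex}, applied to the union of the edge-sets of $C$ and $D$, we may choose a single local cone complex $\Sigma$ at $p$ on whose rays both $C$ and $D$ are supported in a neighborhood of~$p$, and we record their coefficients as vectors $\mathbf{c}, \mathbf{d} \in \ZZ^r$ indexed by the rays $e_1, \ldots, e_r$ of $\Sigma$.

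Next I would express each local intersection number as a bilinear form in these coefficient vectors. To intersect $C$ (as a curve) with $D$ (as a divisor), one writes $D$ locally as $\div(\phi)$ for a rational PL function $\phi$ linear on the cones of~$\Sigma$, records its slopes in a vector $\bff$ with $\mathbf{d} = M_{p,\Sigma}\bff$ as in Proposition~\ref{prop:curve-divisor}, and then the local intersection number is the degree of $\phi|_C$ at~$p$, namely the dot product $\bff \cdot \mathbf{c}$. To intersect with the roles reversed, write $C$ locally as $\div(\psi)$ with slope vector $\mathbf{g}$ so that $\mathbf{c} = M_{p,\Sigma}\mathbf{g}$, and the local intersection number is $\mathbf{g} \cdot \mathbf{d}$. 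So the claim at $p$ becomes the identity $\bff \cdot \mathbf{c} = \mathbf{g} \cdot \mathbf{d}$, i.e.\ $\bff \cdot (M_{p,\Sigma}\mathbf{g}) = \mathbf{g} \cdot (M_{p,\Sigma}\bff)$, which is immediate because $M_{p,\Sigma}$ is a symmetric matrix (shown in the proof of Proposition~\ref{prop:curve-divisor}, and visible in the explicit forms computed in Lemma~\ref{lem:local-matrix}).

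There are two points that require a little care rather than being purely formal. First, I should check that the local intersection number, defined as the degree of $\phi|_C$ at~$p$ via slopes along the edges of~$C$, really equals $\bff \cdot \mathbf{c}$ in a way consistent with the global definition from \cite[Sec.~5]{cartwright-complexes}; this is essentially the computation already used inside the proof of Proposition~\ref{prop:curve-divisor}, so I would cite it. Second, the slope vectors $\bff$ and $\mathbf{g}$ are only determined up to the kernel of $M_{p,\Sigma}$ (which corresponds to linear functions near~$p$), so I should note that the dot products $\bff \cdot \mathbf{c}$ and $\mathbf{g}\cdot\mathbf{d}$ are nonetheless well-defined: if $\mathbf{c} = M_{p,\Sigma}\mathbf{g}$ then $\mathbf{c}$ is orthogonal to $\ker M_{p,\Sigma}$ by symmetry, so changing $\bff$ by a kernel element does not change $\bff\cdot\mathbf{c}$, and symmetrically for $\mathbf{g}\cdot\mathbf{d}$.

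The main obstacle is not any single deep step but rather setting up the reduction cleanly — in particular, ensuring that the global intersection product of $C$ and $D$ genuinely decomposes as a sum of these local terms (handling the case where $C$ and $D$ meet in positive-dimensional pieces, where one instead perturbs within a linear equivalence class or invokes that the product is defined via a PL function globally), and that the two ``directions'' of computing the product correspond, point by point, to the two dot products above. Once that bookkeeping is in place, the symmetry of $M_{p,\Sigma}$ does all the work.
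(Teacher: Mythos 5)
Your proposal is correct and follows essentially the same route as the paper: fix a point $p$ of the intersection, choose a common local cone complex $\Sigma$ for $C \cup D$ via Lemma~\ref{lem:local-cone-complex}, write both local intersection numbers as dot products of slope and coefficient vectors, and observe that both equal $\bff^T M_{p,\Sigma}\, \mathbf g$ by the symmetry of $M_{p,\Sigma}$. The extra points you flag (well-definedness modulo $\ker M_{p,\Sigma}$ and the decomposition into local contributions) are reasonable care but are already built into the framework the paper relies on.
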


\begin{proof}
Let $C$ and $D$ be two divisors and fix a point $p$ in their intersection. Let
$\phi$ and~$\psi$ be the functions defining $C$ and~$D$ respectively in a
neighborhood of~$p$. We can find a local cone complex~$\Sigma$ at~$p$ for $C
\cup D$ and then both $C$ and~$D$ will be supported on the rays of~$\Sigma$. We
let $\mathbf c$ and~$\mathbf d$ be the vectors representing the coefficients of
$C$ and $D$ in a neighborhood of~$p$ and let $\bff$ and $\mathbf g$ be the
vectors of the slopes of $\phi$ and~$\psi$
respectively. Considering $C$ as a curve and
$D$ as a divisor,
their local intersection product is the dot product $\mathbf c \cdot \mathbf g$ and we have the
equalities:
\begin{equation*}
\mathbf c \cdot \mathbf g = 
\mathbf c^T \mathbf g = \mathbf f^T M_{p,\Sigma}^T \mathbf g =
\mathbf f^T M_{p,\Sigma} \mathbf g = \mathbf f^T \mathbf d =
\mathbf f \cdot \mathbf d,
\end{equation*}
which is the computation of the intersection product if we reverse the roles of
$C$ and~$D$.
\end{proof}

A proper algebraic variety has no non-constant regular functions. We will now
prove a combinatorial analogue for tropical surfaces, and we note that it will
not hold for weak tropical surfaces. Moreover, we also need the following
combinatorial condition on the underlying topology of~$\Delta$.

\begin{defn}
A tropical surface~$\Delta$ is \defi{locally connected through codimension~$1$}
if $\link_\Delta(v)$ is connected for each vertex~$v$.
\end{defn}

\noindent Connectivity through codimension~1 is a well-known concept in tropical
geometry because it is a property of the tropicalization of any irreducible
variety over an algebraically closed field~\cite{bjsst,cartwright-payne}.

In the following proposition, we say that a divisor is \defi{effective} if all
of its coefficients are non-negative. If we regard PL functions whose divisor is
effective as analogous to regular functions in algebraic geometry or holomorphic
functions of a complex variable, then the following is an analogue of the
maximum modulus principle in complex analysis.

\begin{prop}\label{prop:no-regular-local}
Let $\Delta$ be a tropical surface which is locally connected
through codimension~$1$ and let $\phi$ be a PL function on a connected open set
$U \subset \Delta$. If $\phi$ achieves a maximum on~$U$ and the divisor
of~$\phi$ is effective, then $\phi$ is constant.
\end{prop}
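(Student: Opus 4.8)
The plan is to argue by contradiction: suppose $\phi$ is not constant but achieves its maximum at some point $p \in U$. Let $S \subseteq U$ be the set of points where $\phi$ attains its global maximum $m$; this is a nonempty closed subset of $U$, and since $\phi$ is not constant, $S \neq U$, so (because $U$ is connected) $S$ is not open. Hence there is a point $p$ in the closure of $U \setminus S$ with $\phi(p) = m$; equivalently, every neighborhood of $p$ contains points where $\phi < m$. I want to derive a contradiction with the effectivity of $\div(\phi)$ near $p$, using the local intersection matrix machinery from Section~\ref{sec:local}.

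The main step is a local computation at $p$. By Lemma~\ref{lem:local-cone-complex} applied to $D = \div(\phi)$, choose a local cone complex $\Sigma$ at $p$ on whose rays $\div(\phi)$ is supported, with rays $e_1, \ldots, e_r$, and let $\bff \in \ZZ^r$ (or $\QQ^r$, after clearing denominators) record the slopes $\bff_i = \phi(v_i) - \phi(p)$. Since $\phi$ achieves a maximum at $p$, every $\bff_i \leq 0$. The coefficient vector of $\div(\phi)$ near $p$ is $M_{p,\Sigma}\bff$, and effectivity says $M_{p,\Sigma}\bff \geq 0$ entrywise. Now I compute the pairing $\bff^T M_{p,\Sigma} \bff = \bff \cdot (M_{p,\Sigma}\bff)$, which is a sum of terms $\bff_i (M_{p,\Sigma}\bff)_i$ with $\bff_i \leq 0$ and $(M_{p,\Sigma}\bff)_i \geq 0$, hence $\bff^T M_{p,\Sigma}\bff \leq 0$. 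On the other hand, by Lemma~\ref{lem:local-matrix}, $M_{p,\Sigma}$ has exactly one positive eigenvalue; I plan to exploit this together with the sign structure (the off-diagonal entries of $M_{p,\Sigma}$ are nonnegative by Lemma~\ref{lem:link}, so $M_{p,\Sigma}$ has a Perron–Frobenius-type structure on the span of the rays) to show that $\bff^T M_{p,\Sigma}\bff \leq 0$ forces $\bff$ to lie in a specific subspace — concretely, that if $\bff$ pairs nonpositively with itself and $M_{p,\Sigma}\bff \geq 0$, then $M_{p,\Sigma}\bff = 0$ and moreover $\bff$ is "locally constant" in the sense that $\bff_i = \bff_j$ whenever $e_i, e_j$ lie in a common cone of $\Sigma$.

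The connectivity hypothesis enters precisely here. If $p$ is a vertex, the rays of $\Sigma$ (taken as the facets themselves) correspond to vertices of $\link_\Delta(v)$, and "$e_i, e_j$ in a common cone" means they are joined by an edge of the link; connectivity of $\link_\Delta(v)$ then forces all $\bff_i$ equal to a common value $c \leq 0$. If $p$ is on an edge or in a facet, the link is automatically connected, so the same conclusion holds. Thus $\phi$ has the same slope $c \leq 0$ along every ray direction at $p$; but if $c < 0$ then $\phi(p)$ is a strict maximum and $p$ has a neighborhood inside $S$, contradicting the choice of $p$; and if $c = 0$ then $\phi$ is constant on a neighborhood of $p$, again contradicting that $p$ lies in the closure of $U \setminus S$. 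Either way we reach a contradiction, so $S = U$ and $\phi$ is constant.

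The step I expect to be the main obstacle is the linear-algebra lemma extracting "$M_{p,\Sigma}\bff = 0$ and $\bff$ constant on cones" from the three inputs "$\bff^T M_{p,\Sigma}\bff \leq 0$", "$M_{p,\Sigma}\bff \geq 0$", and "$M_{p,\Sigma}$ has one positive eigenvalue with nonnegative off-diagonal entries." One clean route: the all-ones vector $\mathbf{1}$ (or the vector of slopes of a globally linear function on the cone) typically lies in the kernel of $M_{p,\Sigma}$ or at least in its nonnegative eigenspace, and one can try to show $M_{p,\Sigma}$ restricted to the orthogonal complement of such distinguished vectors is negative semidefinite; then $\bff^T M_{p,\Sigma}\bff \leq 0$ combined with $M_{p,\Sigma}\bff \geq 0$ and $\bff \leq 0$ should pin down $\bff$. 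I would want to double-check the edge-point and facet-point cases against the explicit matrices~(\ref{eq:edge-local-matrix}) and the $3\times 3$ all-ones matrix computed in the proof of Lemma~\ref{lem:local-matrix}, where the structure is completely transparent, and then handle the vertex case via the definition of a tropical complex (one positive eigenvalue of $M_v$) together with connectivity of the link.
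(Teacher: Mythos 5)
Your overall strategy (localize at a maximum point, pick a local cone complex, and play the sign conditions $\bff \le 0$ and $M_{p,\Sigma}\bff \ge 0$ against connectivity and the one-positive-eigenvalue condition) matches the paper's, but the step you yourself flag as the main obstacle is a genuine gap, and the route you sketch for closing it does not work. The quadratic form gives you nothing new: under your hypotheses each term $\bff_i (M_{p,\Sigma}\bff)_i$ is already $\le 0$, so $\bff^T M_{p,\Sigma}\bff \le 0$ is automatic and cannot force $M_{p,\Sigma}\bff = 0$, let alone that $\bff$ is constant on cones (a nonpositive value of a quadratic form on a single vector is perfectly consistent with that vector having a large component along the positive eigendirection). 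Your proposed distinguished vector $\mathbf 1$ is also not an eigenvector of $M_{p,\Sigma}$ in general: the row sum of $M_v$ in the row indexed by an edge $e$ is $-\alpha(w,e) + \deg(e) = \alpha(v,e)$, which varies with $e$ and can even be negative, so neither $\mathbf 1 \in \ker M_{p,\Sigma}$ nor negative semidefiniteness on $\mathbf 1^{\perp}$ holds. The paper's missing ingredient is the Perron--Frobenius eigenvector $\mathbf w$ of $M_{p,\Sigma} + cI$ for $c \gg 0$: local connectivity (via Lemma~\ref{lem:link}) gives irreducibility, hence $\mathbf w > 0$ with eigenvalue of maximal norm; the one-positive-eigenvalue condition forces the corresponding eigenvalue $\lambda - c$ of $M_{p,\Sigma}$ to be the positive one; and then $0 \le \mathbf w^T(M_{p,\Sigma}\bff) = (\lambda - c)\,\mathbf w^T\bff \le 0$ forces $\bff = 0$ outright. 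That argument lands directly on $\bff = 0$, which is stronger than your intermediate claim and is what you actually need.

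Two further problems in the surrounding argument. First, your contradiction setup via a boundary point $p$ of the maximum set $S$ leads you into a faulty case analysis: if all slopes at $p$ equalled some $c < 0$, then $p$ would be a strict local maximum, so $p$ would have \emph{no} neighborhood inside $S$ --- the opposite of what you assert --- and there is no contradiction with $p \in \overline{U \setminus S}$ as written. Once you know $\bff = 0$ at every maximum point, the clean finish is the paper's: the maximum set is open, closed, and nonempty, hence equals $U$ by connectedness. Second, the rays of a general local cone complex at a vertex need not be the edges of $\Delta$, so ``joined by an edge of $\link_\Delta(v)$'' should be replaced by adjacency of rays in $\Sigma$, which is what Lemma~\ref{lem:link} records and what makes the irreducibility argument go through for all three types of points uniformly.
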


\begin{proof}
Let $p$ be a point at which $\phi$ achieves its maximum. By
Lemma~\ref{lem:local-cone-complex}, let $\Sigma$ be a local cone complex
of~$\Delta$ at~$p$ such that $\phi$ is linear on its cones. We choose a
sufficiently large number~$c$, such that all entries of $M_{p,\Sigma} + cI$ are
non-negative, where $I$ is the identity matrix. In the sense of the
Perron-Frobenius theorem, a square matrix~$M$ is irreducible if there does not
exist a non-empty, proper subset~$J$ of the indices such that the entries
$M_{i,j}$ are zero whenever $i \not\in J$ and $j \in
J$~\cite[Def.~2.1.2]{berman-plemmons}. By Lemma~\ref{lem:link},
such a set~$J$ for $M_{p,\Sigma}$ would correspond
to a non-trivial connected component in $\link_\Sigma(p)$, which would
contradict the local connectivity hypothesis. Therefore, by the Perron-Frobenius theorem,
$M_{p, \Sigma} + cI$ has a unique eigenvector~$\mathbf w$, with positive
entries, whose eigenvalue~$\lambda$ has maximal norm among all eigenvalues of
$M_{p,\Sigma} + cI$ \cite[Thm.~2.1.3b, 2.1.1b]{berman-plemmons}. Thus, $\mathbf
w$ is an eigenvector of $M_{p,\Sigma}$ with eigenvalue $\lambda - c$, which is
greater than all other eigenvalues of~$M_{p,\Sigma}$. Since $\Delta$ is a
tropical surface, Lemma~\ref{lem:local-matrix} shows that $M_{p, \Sigma}$
has a unique positive eigenvalue, and so $\lambda - c$ must be that positive
eigenvalue.

Now let $\bff$ be the vector containing the outgoing slopes of~$\phi$ at $p$.
Then, $M_{p,\Sigma} \bff$ contains the coefficients of the divisor of~$\phi$ in
a neighborhood of~$p$, and we've assumed these coefficients to be non-negative.
Every entry of~$\mathbf w$ is positive, and so $\mathbf w^T M_{p,\Sigma} \bff =
(\lambda - c) \mathbf w^T \bff$ is non-negative, and since
$\lambda - c$ is positive, the entries of $\mathbf w^T \bff$ are also non-negative. On
the other hand, $\phi$ is maximal at~$p$, so the entries of~$\bff$ are
non-positive and the only way for $\mathbf w^T \bff$ to be non-negative is if
$\bff$ is identically zero. Thus, $\phi$ is constant in a neighborhood of~$p$.

Therefore, the subset of~$U$ where $\phi$ achieves its maximum is open, and
since it is closed by continuity of~$\phi$ and non-empty by assumption, $\phi$
must be constant on~$U$.
\end{proof}

\begin{cor}\label{cor:no-regular}
Let $\Delta$ be a tropical surface which is locally connected through
codimension~$1$. If $\phi$ is a PL function on~$\Delta$ whose associated divisor
is effective, then $\phi$ is constant.
\end{cor}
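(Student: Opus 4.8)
The plan is to deduce Corollary~\ref{cor:no-regular} from Proposition~\ref{prop:no-regular-local} by taking $U = \Delta$. The one thing to check is the hypothesis of the proposition that $\phi$ achieves a maximum on~$U$. Here I would invoke that $\Delta$ is a finite $\Delta$-complex and hence compact: a finite simplicial (or $\Delta$-) complex is built from finitely many closed simplices, each of which is compact, and the underlying topological space is their continuous image, so $\Delta$ is compact. Since $\phi$ is continuous and $\Delta$ is compact and nonempty, $\phi$ attains its maximum on~$\Delta$.

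With that in hand, the argument is immediate: $\Delta$ is connected by the definition of a weak tropical surface, it is a connected open subset of itself, $\phi$ is a PL function on it whose divisor is effective by hypothesis, and $\phi$ achieves a maximum. So Proposition~\ref{prop:no-regular-local} applies verbatim and yields that $\phi$ is constant.

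There is essentially no obstacle here; the only subtlety worth a sentence is making explicit that the finiteness built into the definition of a weak tropical surface (a finite, connected $\Delta$-complex) is exactly what guarantees compactness and hence the existence of the maximum that Proposition~\ref{prop:no-regular-local} needs as an input. I would phrase the proof in two sentences: first that $\Delta$ is compact and $\phi$ continuous, so a maximum is attained; then that Proposition~\ref{prop:no-regular-local} with $U = \Delta$ gives the conclusion.

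\begin{proof}
Since $\Delta$ is a finite $\Delta$-complex, it is compact, and $\phi$ is continuous, so $\phi$ achieves a maximum on~$\Delta$. As $\Delta$ is connected and the divisor of~$\phi$ is effective by hypothesis, Proposition~\ref{prop:no-regular-local} applied with $U = \Delta$ shows that $\phi$ is constant.
\end{proof}
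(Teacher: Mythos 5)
Your proof is correct and is essentially identical to the paper's: both reduce to Proposition~\ref{prop:no-regular-local} with $U = \Delta$ and observe that compactness of the finite $\Delta$-complex guarantees $\phi$ attains its maximum. No issues.
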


\begin{proof}
By Proposition~\ref{prop:no-regular-local}, it suffices to show that $\phi$
achieves its maximum. However, a finite $\Delta$-complex is compact, so $\phi$
always achieves its maximum on~$\Delta$.
\end{proof}

Both Proposition~\ref{prop:no-regular-local} or
Corollary~\ref{cor:no-regular} are false for weak tropical surfaces by the
Example~\ref{ex:no-hodge} below. Since Corollary~\ref{cor:no-regular} is a
natural analogue of a result from algebraic geometry, and it doesn't reference
intersections or eigenvalues, the fact that it fails for weak tropical
surfaces shows the importance of our condition on local intersection matrices
in our definitions of tropical surfaces and tropical complexes.

\begin{figure}
\begin{centering}\includegraphics{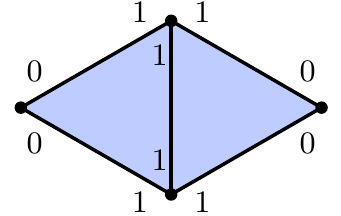}\par\end{centering}
\caption{This complex is a weak tropical surface which is not a tropical surface
because the local intersection matrices at the leftmost and rightmost vertices
are both negative semidefinite. The complex also
violates the conclusion of Corollary~\ref{cor:no-regular} in that the horizontal
coordinate function is a non-constant PL function with trivial divisor.}
\label{fig:no-hodge}
\end{figure}

\begin{ex}\label{ex:no-hodge}
Let $\Delta$ be the weak tropical surface illustrated in
Figure~\ref{fig:no-hodge}. This complex violates is not a tropical complex
because of the vertices on the left and right. The
PL function which is $0$ on the left vertex, $1$ on the middle vertices, and $2$
on the rightmost vertex, and linear on every simplex is obviously non-constant,
but its divisor is trivial, and thus effective.
\end{ex}

\section{Algebraic equivalence of divisors}\label{sec:alg}

Two divisors on a tropical surface are defined to be linearly equivalent if
their difference is the divisor of a PL function. We now define a coarsening of
this equivalence relation, which is algebraic equivalence of divisors. In this
paper, our primary application for algebraic equivalence is to reduce questions
about intersection theory of tropical surfaces in Section~\ref{sec:pairing} to divisors
supported on the edges. However, since the proofs generalize easily, we work
with weak tropical complexes of arbitrary dimension throughout this section.

Similar to a weak tropical surface, a weak tropical complex of arbitrary
dimension~$n$ consists of both a finite, connected $\Delta$-complex whose
simplices have dimension at most~$n$ and some integers $\alpha(v,r)$, for which
we refer to~\cite[Def.~2.1]{cartwright-complexes} for
details. Similar to weak tropical surfaces, a weak tropical complex has local
intersection matrices associated to each $(n-2)$-dimensional simplex, which each
have exactly one positive eigenvalue for a tropical complex. However, we will
not need the local intersection matrix in this section and will only work with
weak tropical complexes.

A PL function on a weak tropical complex is defined as being piecewise linear
with integral slopes on each simplex, analogously to the surface
case. We will
denote the sheaf of PL functions by~$\cP$. Any PL function~$\phi$ defines a
divisor $\div(\phi)$ a formal sum of polyhedral subsets of~$\Delta$ by
\cite[Prop.~4.5]{cartwright-complexes}. The linear functions on open subsets of
$\Delta$ are the PL functions~$\phi$ such that $\div(\phi)$ is trivial, and we
denote the sheaf of linear functions by $\cA$.

A Cartier divisor on~$\Delta$ is a formal sum of polyhedra which is locally the
divisor of a PL function. Thus, a Cartier divisor can be given by an open cover
$\{U_i\}$ together with a PL function $\phi_i$ on each $U_i$ such that for each
pair of indices $i$ and $j$, $\div(\phi_i)$ and $\div(\phi_j)$ agree on $U_i
\cap U_j$. This condition is equivalent to requiring $\phi_i\vert_{U_i \cap U_j}
- \phi_j \vert_{U_i \cap U_j}$ to always be a linear function, so a Cartier
divisor is equivalent to a global section of the quotient sheaf $\cP / \cA$.
Cartier divisors which are linearly equivalent to zero are those defined by a
global section of $\cP$. From the long-exact sequence in cohomology, we have the
following sheaf-theoretic description of the group of Cartier divisors modulo
linear equivalence, which we call the Picard group~$\Pic(\Delta)$.

\begin{prop}\label{prop:h-1}
The Picard group $\Pic(\Delta)$ is isomorphic to
$H^1(\Delta, \cA)$.
\end{prop}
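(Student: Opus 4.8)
The plan is to deduce the isomorphism directly from the long exact cohomology sequence associated to the short exact sequence of sheaves
\begin{equation*}
0 \to \cA \to \cP \to \cP/\cA \to 0
\end{equation*}
on the topological space $\Delta$. The relevant portion of the long exact sequence reads
\begin{equation*}
H^0(\Delta, \cP) \to H^0(\Delta, \cP/\cA) \to H^1(\Delta, \cA) \to H^1(\Delta, \cP).
\end{equation*}
By the discussion preceding the proposition, $H^0(\Delta, \cP/\cA)$ is exactly the group of Cartier divisors, and the image of $H^0(\Delta,\cP)$ inside it is the subgroup of Cartier divisors that are linearly equivalent to zero, so the cokernel of the first map is $\Pic(\Delta)$. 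Exactness identifies this cokernel with the kernel of the connecting-to-next map $H^1(\Delta,\cA)\to H^1(\Delta,\cP)$. Thus the first step is to record these identifications carefully, and the remaining step is to show that the map $H^1(\Delta,\cA)\to H^1(\Delta,\cP)$ is the zero map — equivalently, that $H^1(\Delta,\cP)$ vanishes, or at least that $H^1(\Delta,\cA)\to H^1(\Delta,\cP)$ has trivial image — which would force $H^1(\Delta,\cA)$ to be precisely the image of the connecting homomorphism and hence isomorphic to $\Pic(\Delta)$.

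The cleanest route for that second step is to prove $H^1(\Delta,\cP) = 0$, and I expect this to follow from a fineness or softness property of the sheaf $\cP$ of PL functions. Concretely, $\Delta$ is a compact (finite) $\Delta$-complex, hence paracompact, and $\cP$ is a sheaf of real-valued continuous functions closed under the operations needed to build partitions of unity: if $\{U_i\}$ is a finite open cover, one can choose a subordinate partition of unity by PL functions (for instance, using barycentric-coordinate-type PL bump functions adapted to a sufficiently fine subdivision of $\Delta$), since being PL with integral slopes on each simplex is preserved under such constructions after passing to a refinement. A sheaf admitting partitions of unity subordinate to every locally finite open cover is fine, and fine sheaves on a paracompact space are acyclic, so $H^k(\Delta,\cP)=0$ for all $k\geq 1$. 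I would either cite this standard fact or, if the paper prefers a self-contained argument, verify directly via a Čech computation on a good cover that a Čech $1$-cocycle with values in $\cP$ splits using a PL partition of unity.

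The main obstacle is the fineness claim for $\cP$: one must be slightly careful because PL functions are required to have \emph{integral} slopes on each simplex, and integrality is not preserved by multiplication by a general continuous partition-of-unity function. The fix is that we only need a partition of unity \emph{within the sheaf $\cP$ itself}, i.e. PL functions summing to $1$; such functions do exist after subdividing, since on a fine enough subdivision one can take PL functions that are $1$ near a vertex and $0$ away from its star, and the integrality of slopes on each (subdivided) simplex is automatic for these piecewise-linear bumps. Passing to a subdivision does not change the sheaf cohomology because the underlying topological space and the sheaf are unchanged. Once $H^1(\Delta,\cP)=0$ is in hand, the proposition is immediate from the long exact sequence as above; I would present it in two short paragraphs, the first setting up the exact sequence and the identifications, the second disposing of $H^1(\Delta,\cP)$.
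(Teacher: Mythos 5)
The first half of your argument — the long exact sequence attached to $0 \to \cA \to \cP \to \cP/\cA \to 0$ and the identification of $\Pic(\Delta)$ with the cokernel of $H^0(\Delta,\cP) \to H^0(\Delta,\cP/\cA)$ — is exactly the paper's setup, and reducing everything to the vanishing of $H^1(\Delta,\cP)$ is the right move.

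The second half has a genuine gap. Fineness is not the statement that the cover admits subordinate sections of $\cP$ summing to $1$; it requires endomorphisms $\eta_i$ of the sheaf $\cP$, supported in the $U_i$, with $\sum_i \eta_i = \mathrm{id}$. For sheaves of modules over a ring of functions one gets these by multiplying by the bump functions, but $\cP$ is only a sheaf of abelian groups: the product of two piecewise linear functions is piecewise \emph{quadratic}, so multiplication by a PL bump $\rho_k$ does not send $\cP$ to $\cP$. Consequently the standard splitting $t_i = \sum_k \rho_k s_{ik}$ of a \v{C}ech cocycle does not produce sections of $\cP$. The obstacle you flag (integrality of slopes) is not the real one, and your fix — producing PL functions summing to $1$ on a subdivision — does not address it, since you never get to multiply a cocycle by those functions while staying inside $\cP$. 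The paper avoids partitions of unity entirely and instead proves that $\cP$ is \emph{soft}: given a section $\phi$ of $\cP$ on a neighborhood $U$ of a closed set $Z$, it truncates via $\phi_N(x) = \max\{C,\, \phi(x) - N\, d_1(\overline V, x)\}$ for a suitable intermediate open set $V$ and large $N$, so that $\phi_N$ agrees with $\phi$ near $Z$, becomes the constant $C$ near $\Delta \setminus U$, and hence extends globally; softness on a compact space then gives the required acyclicity. Note that this $\max$/truncation trick exploits the tropical (min-plus) closure properties of PL functions precisely where a multiplicative partition of unity is unavailable. If you want to complete your proof, you should replace the fineness claim by such a softness (or direct cocycle-splitting) argument.
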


\begin{proof}
The quotient sheaf $\cP / \cA$ gives us the following long exact sequence in
sheaf cohomology:
\begin{equation*}
0 \rightarrow H^0(\Delta, \cA) \rightarrow
H^0(\Delta, \cP) \rightarrow H^0(\Delta, \cP / \cA) \rightarrow
H^1(\Delta, \cA) \rightarrow H^1(\Delta, \cP) \rightarrow
\end{equation*}
Thus, by the discussion above it will be sufficient to show that $H^1(\Delta, \cP)$
is trivial. We will show that all higher sheaf cohomology of~$\cP$ is trivial by
showing that it is a soft sheaf~\cite[Thm.~II.4.4.3]{godement}. Recall that
$\cP$ is called a soft sheaf if for any closed set $Z \subset \Delta$, the map
$H^0(\Delta, \cP) \rightarrow H^0(Z, \cP)$ is surjective, where $H^0(Z, \cP)$ is
the direct limit of $H^0(U, \cP)$ as $U$ ranges over all open sets~$U$
containing~$Z$.

We let $\phi$ be a function in $H^0(Z, \cP)$, which can be represented by a PL
function on some open set $U \supset Z$, which we also denote~$\phi$. We choose
an open set $V \supset Z$ with polyhedral boundary and whose closure is
contained in~$U$. Since $\phi$ is piecewise linear, it is bounded on~$U$ and we
let $C$ be a constant less than the minimum of~$\phi$. For any integer~$N$, we define the
function~$\phi_N$ on~$U$ by
\begin{equation*}
\phi_N(x) = \max\{C, \phi(x) - N d_1(\overline V, x)\},
\end{equation*}
where $d_1(\overline V,x)$ denotes the minimum distance, in the $L^1$~metric,
between $x$ and the closure of~$V$. It is clear that $\phi_N$ agrees with~$\phi$
on~$V$, and therefore, they have the same image in $H^0(\Delta, Z)$. Moreover,
$\phi_N$ is a PL function. For sufficiently large~$N$, we can
ensure that $\phi_N$ takes the value~$C$ at every point in a neighborhood of
$\Delta \setminus U$. Thus, we can extend $\phi_N$ by~$C$ to a function on all
of~$\Delta$, which completes the proof that $\cP$ is soft.
\end{proof}

Following~\cite[Sec.~5.1]{mikhalkin-zharkov}, we can use sheaf cohomology to
define an analogue of the exponential sequence. The sheaf of locally constant
real-valued functions, which we denote~$\RR$, is a subsheaf of $\cA$, and we
will denote the quotient sheaf $\cA / \RR$ as $\cD$.
In~\cite{mikhalkin-zharkov}, $\cD$ is called the cotangent sheaf.
We are interested in the following section of the long exact sequence in
cohomology:
\begin{equation}\label{eq:exp}
\rightarrow
H^0(\Delta, \cD) \rightarrow H^1(\Delta, \RR) \rightarrow
H^1(\Delta, \cA) \rightarrow H^1(\Delta, \cD) \rightarrow
H^2(\Delta, \RR) \rightarrow
\end{equation}
Note that the cohomology of the sheaf~$\RR$ agrees with simplicial cohomology
on~$\Delta$, which justifies the notation.
Also, if $\Delta$ is a tropical surface, locally connected through
codimension~$1$, then $H^0(\Delta, \cA)$ vanishes by
Corollary~\ref{cor:no-regular}, and thus first map of~(\ref{eq:exp}) is
injective.

The exact sequence~(\ref{eq:exp}) has a
striking analogy to the following piece of the long exact sequence coming from
the exponential sequence on a complex variety~$X$:
\begin{equation*}
\rightarrow
H^1(X, \ZZ) \rightarrow H^1(X, \mathcal O_X) \rightarrow
H^1(X, \mathcal O_X^*) \rightarrow H^2(X, \ZZ) \rightarrow
H^2(X, \mathcal O_X) \rightarrow
\end{equation*}
In both exact sequences, the middle term is isomorphic to the
Picard group of the weak tropical complex or variety.

Following this analogy, we define the \defi{Chern class} of an element of the
Picard group $H^1(\Delta, \cA)$ to be its image in $H^1(\Delta, \cD)$. For
tropical curves, $H^1(\Delta, \cD)$ is isomorphic $\ZZ$ and the Chern class of a
divisor records its degree. For surfaces and higher-dimensional tropical
complexes, $H^1(\Delta, \cD)$ can be higher rank, and there can also be an
obstruction in $H^2(\Delta, \RR)$, so that not every element of $H^1(\Delta,
\cD)$ is the Chern class of a divisor. We will refer to the image of the Picard
group $H^1(\Delta, \cA)$ in $H^1(\Delta, \cD)$ as the \defi{N\'eron-Severi
group} $\NS(\Delta)$. Two divisors are \defi{algebraically equivalent} if their
difference is a Cartier divisor with trivial Chern class.

\begin{figure}
\includegraphics{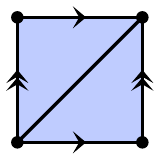}
\caption{The triangulation of the $2$-dimensional torus used in
Example~\ref{ex:torus}. A torus is formed by identifying the horizontal edges
with each other and the vertical edges with each other as indicated
by the arrow labeling.}\label{fig:torus}
\end{figure}

\begin{ex}\label{ex:torus}
This example is the $2$-dimensional case of the theory of tropical Abelian
varieties discussed in~\cite[Section~5.1]{mikhalkin-zharkov}.
Let $\Delta$ be the triangulation of a $2$-dimensional torus depicted in
Figure~\ref{fig:torus} with $\alpha(v,e) = 1$ for all endpoints~$v$ of all
edges~$e$. The sheaf $\cD$ is isomorphic to the sheaf of locally constant
functions valued in~$\ZZ^2$, by taking a linear function to its
derivatives in the $x$ and~$y$ directions, and therefore $H^1(\Delta, \cD) \isom
\ZZ^4$.

For $\Delta$, the long exact sequence (\ref{eq:exp}) is:
\begin{equation*}
0 \rightarrow
\ZZ^2 \rightarrow \RR^2 \rightarrow (\RR/\ZZ)^2 \oplus \ZZ^3 \rightarrow
\ZZ^4 \rightarrow \RR \rightarrow
\end{equation*}
The summand~$(\RR/\ZZ)^2$ of the Picard
group corresponds to algebraically trivial divisors, each of which is linearly
equivalent to a unique divisor of the form
$[\pi_1^{-1}(s)] - [\pi_1^{-1}(0)] + [\pi_2^{-1}(t)] - [\pi_2^{-1}(0)]$,
where $\pi_1$ and~$\pi_2$ are the two coordinate projections
from~$\Delta$ to the cycle of length~$1$ and
$s$ and~$t$ are arbitrary points on the $1$-cycle and
$0$ is its vertex. The N\'eron-Severi group is~$\ZZ^3$, whose
generators can be taken to be the three edges in Figure~\ref{fig:torus}.
Thus, $\NS(\Delta)$ is a proper
subgroup of $H^1(\Delta, \cD)$, and the map to $H^2(\Delta, \RR)
\isom \RR$ is non-trivial.
\end{ex}

We will now show that algebraic equivalence preserves intersection numbers,
which is an analogue of the classical fact that algebraic equivalence implies
numerical equivalence~\cite[p.~374]{fulton-intersection}. The following
generalizes Proposition~5.6 from~\cite{cartwright-complexes}, which dealt with
linear equivalence.

\begin{prop}\label{prop:intersect-alg}
If $D$ and $D'$ are algebraically equivalent and $C$ is a curve, then the
degrees of $D \cdot C$ and $D' \cdot C$ are equal.
\end{prop}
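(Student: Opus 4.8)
The plan is to reduce to the case $D'=0$, so that $E:=D-D'$ is a Cartier divisor with trivial Chern class, and to prove $\deg(E\cdot C)=0$. Since the intersection product with a curve is additive in the divisor and degree is additive on $0$-cycles, $\deg(D\cdot C)-\deg(D'\cdot C)=\deg(E\cdot C)$; and by the linear-equivalence version of this statement (Proposition~5.6 of~\cite{cartwright-complexes}, recalled just before the proposition) we are free to replace $E$ by any linearly equivalent divisor. So the whole question reduces to producing a convenient local model for an algebraically trivial Cartier divisor and then restricting it to $C$.

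First I would produce the model. By Proposition~\ref{prop:h-1} the class of $E$ lies in $H^1(\Delta,\cA)=\Pic(\Delta)$, and its Chern class is the image of this class in $H^1(\Delta,\cD)$. Triviality of the Chern class together with exactness of~(\ref{eq:exp}) says that the Picard class of $E$ is the image of some $\eta\in H^1(\Delta,\RR)$. Represent $\eta$ by a \v{C}ech $1$-cocycle $\{c_{ij}\}$ of locally constant real functions on the overlaps of an open cover $\{U_i\}$. Since $\cP$ is soft (this is shown in the proof of Proposition~\ref{prop:h-1}), $H^1(\Delta,\cP)=0$, so after refining the cover the image of $\{c_{ij}\}$ in $\cP$ is a coboundary: there are PL functions $\phi_i$ on $U_i$ with $\phi_i-\phi_j=c_{ij}$. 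Because each $c_{ij}$ is locally constant, the local divisors $\div(\phi_i)$ agree on overlaps and glue to a Cartier divisor whose Picard class is the class of $\{c_{ij}\}$, namely the image of $\eta$, namely the class of $E$. Replacing $E$ by this divisor, we may thus assume $E$ is defined by PL functions $\phi_i$ on $U_i$ with each difference $\phi_i-\phi_j$ locally constant.

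Now I would restrict to $C$. By the definition of the intersection product of a Cartier divisor with a curve, $\deg(E\cdot C)$ is the sum over $p\in C$ of the order of vanishing at $p$ of the restriction $\phi_i|_C$ to the tropical curve $C$ (for any $U_i\ni p$); concretely, in a local cone complex at $p$ this is the dot product of the slope vector of $\phi_i$ with the coefficient vector of $C$, exactly as in the computation in the proof of Proposition~\ref{prop:curve-divisor}. The functions $\phi_i|_C$ are piecewise linear on $C$, and since each $\phi_i-\phi_j$ is locally constant they differ by locally constant functions on overlaps, so they define a single divisor on $C$, equal to $E\cdot C$, as the common local divisor $\div_C(\phi_i|_C)$. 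It remains to see this divisor has degree zero: after subdividing $C$ so that every $\phi_i|_C$ is linear in the ordinary sense on each edge, the restriction has a well-defined slope along each oriented edge of $C$, the order of vanishing at a vertex $v$ is the weighted sum over the edges at $v$ of the outgoing slopes, and an edge of weight $w_e$ with endpoints $v$ and $v'$ contributes $w_e s$ at $v$ and $-w_e s$ at $v'$. Summing over the finitely many vertices of the compact graph $C$, these contributions cancel in pairs, so the degree is zero. Hence $\deg(E\cdot C)=0$ and $\deg(D\cdot C)=\deg(D'\cdot C)$.

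The step I expect to be the main obstacle is the passage to the good model in the second paragraph: one must extract from the purely cohomological condition of triviality of the Chern class — via~(\ref{eq:exp}) and the softness of $\cP$ — a choice of local defining PL functions whose pairwise differences are \emph{locally constant}, and check that refining covers and re-gluing does not disturb the linear-equivalence class. Once that is in place, the restriction to $C$ and the edge-by-edge cancellation giving $\deg\div_C(\psi)=0$ are routine.
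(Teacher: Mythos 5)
Your proposal is correct and follows essentially the same route as the paper's proof: reduce to an algebraically trivial divisor $E$, use the exponential sequence~(\ref{eq:exp}) to realize $E$ by local PL equations whose pairwise differences are locally constant, and conclude that the restriction to $C$ has well-defined slopes whose contributions cancel at the two ends of each segment. The paper states these steps more tersely, while you fill in the cohomological details (softness of $\cP$ and the \v{C}ech representative) that justify the existence of the good local model; this is an elaboration of the same argument, not a different one.
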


\begin{proof}
Since the intersection number is linear in the divisor, it is sufficient to show
that if $D$ is algebraically trivial, then $D \cdot C$ has degree~$0$.
By definition, if $D$ is algebraically trivial then it is in the image of
$H^1(\Delta, \RR)$ in the exponential sequence~(\ref{eq:exp}). This means that
the differences between the local equations for~$D$ from one chart to the next
are locally constant functions. However, the intersection with $C$ depends only
on the slope, so for any segment of $C$ for which the defining equations are
linear, the contribution will be opposite at the two ends. Thus, the total
degree will be $0$, as we wanted to show.
\end{proof}

Divisors on weak tropical complexes are defined to be generated by arbitrary polyhedral subsets, but it's
computationally most convenient to work with divisors supported on the ridges
of~$\Delta$, which we will call \defi{ridge divisors}. We define
$\Pic_{\ridge}(\Delta)$ to be the group of Cartier ridge divisors modulo linear
equivalence. We also define $\cA_\ZZ$ to be the subsheaf of~$\cA$ consisting of
linear functions such that, on each simplex, the linear extension of the
function takes on integral values at the vertices.

\begin{prop}\label{prop:ridge-divisors}
The group $\Pic_{\ridge}(\Delta)$ is
isomorphic to $H^1(\Delta, \cA_\ZZ)$.
\end{prop}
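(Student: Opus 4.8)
The plan is to run the proof of Proposition~\ref{prop:h-1} with a smaller sheaf in place of~$\cP$. Let $\cP_\ZZ$ denote the sheaf of PL functions which are affine in the ordinary sense on each simplex and which take integral values at every vertex of~$\Delta$; this contains $\cA_\ZZ$ as the subsheaf of functions with trivial divisor, and it stands to ridge divisors as $\cP$ stands to arbitrary Cartier divisors. The first point to record is that the divisor of a PL function~$\phi$ is supported on the ridges of~$\Delta$ if and only if $\phi$ is affine on every facet, because the divisor of~$\phi$ in the interior of a facet measures exactly the failure of~$\phi$ to be affine there; and since $\Delta$ is connected and $\phi$ has integral slopes, subtracting the value of~$\phi$ at a single vertex makes it integral at all vertices without changing its divisor. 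Consequently a Cartier ridge divisor is the same thing as a global section of $\cP_\ZZ/\cA_\ZZ$, and such a divisor is linearly equivalent to zero precisely when it lies in the image of $H^0(\Delta,\cP_\ZZ)$. The long exact sequence attached to $0\to\cA_\ZZ\to\cP_\ZZ\to\cP_\ZZ/\cA_\ZZ\to 0$ then identifies $\Pic_\ridge(\Delta)$ with $\ker\bigl(H^1(\Delta,\cA_\ZZ)\to H^1(\Delta,\cP_\ZZ)\bigr)$, so everything reduces to proving $H^1(\Delta,\cP_\ZZ)=0$.

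Here the argument must depart from that of Proposition~\ref{prop:h-1}, because $\cP_\ZZ$ is not a soft sheaf: the truncation $\max\{C,\phi-N d_1(\overline V,\cdot)\}$ used there need not be affine on the simplices, and, concretely, if two facets meet only along a common vertex then a section of~$\cP_\ZZ$ on the complement of that vertex need not extend over it. Instead I would compute $H^1(\Delta,\cP_\ZZ)$ by \v{C}ech cohomology with respect to a sufficiently fine open cover $\cU$ adapted to the triangulation, so arranged that every member of $\cU$ and every finite intersection of members of $\cU$ is an open star of a simplex. The key lemma is then that $\cP_\ZZ$ is acyclic on the open star $\mathrm{St}(\sigma)$ of any simplex~$\sigma$: this open set is an open cone whose apex direction is the relative interior of~$\sigma$, and for every connected open $U\subset\mathrm{St}(\sigma)$ meeting the interior of~$\sigma$ the restriction $\cP_\ZZ(\mathrm{St}(\sigma))\to\cP_\ZZ(U)$ is an isomorphism, since an affine-on-facets function near~$\sigma$ has a unique affine-on-facets extension; together with the conical structure this should force $\cP_\ZZ|_{\mathrm{St}(\sigma)}$ to behave cohomologically like a constant sheaf on a cone and have no higher cohomology. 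Granting this, $\check H^\bullet(\cU,\cP_\ZZ)=H^\bullet(\Delta,\cP_\ZZ)$, and one finishes by checking that the resulting finite \v{C}ech complex — whose terms are the groups $\cP_\ZZ(\mathrm{St}(\sigma))$, each freely generated over~$\ZZ$ by data at finitely many vertices — has no cohomology in degree~$1$.

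I expect the main obstacle to be precisely the acyclicity of this cover for~$\cP_\ZZ$, i.e.\ the vanishing of the higher cohomology of~$\cP_\ZZ$ on open stars of simplices: for~$\cP$ the softness argument of Proposition~\ref{prop:h-1} handles the analogous point for free, whereas for~$\cP_\ZZ$ one genuinely has to exploit the conical structure of open stars and the local rigidity of affine functions, and this is where the real work lies. A reformulation worth keeping in reserve is to pass first to the larger sheaf $\cP_{\mathrm{aff}}$ of all PL functions affine on each simplex: its quotient by $\cP_\ZZ$ is the constant sheaf $\underline{\RR/\ZZ}$ (an affine-on-simplices function is integral at the vertices up to one real constant on each connected component), so the exact sequence $0\to\cP_\ZZ\to\cP_{\mathrm{aff}}\to\underline{\RR/\ZZ}\to 0$ together with the surjectivity of $H^0(\Delta,\cP_{\mathrm{aff}})\to H^0(\Delta,\underline{\RR/\ZZ})=\RR/\ZZ$, witnessed by constant functions, reduces $H^1(\Delta,\cP_\ZZ)=0$ to $H^1(\Delta,\cP_{\mathrm{aff}})=0$, which is proved by the same \v{C}ech argument.
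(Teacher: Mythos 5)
Your reduction is the same as the paper's: you introduce the sheaf $\cP_\ZZ$ of functions affine on each simplex with integral vertex values, identify Cartier ridge divisors with global sections of $\cP_\ZZ/\cA_\ZZ$, and reduce everything to the vanishing of $H^1(\Delta,\cP_\ZZ)$; you also correctly observe that $\cP_\ZZ$ is not soft, so the truncation argument from Proposition~\ref{prop:h-1} is unavailable and a \v Cech computation over a cover adapted to the triangulation is the right substitute. Up to that point you match the paper exactly.

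The gap is in the one step that carries all the content: you never prove that $H^1(\Delta,\cP_\ZZ)=0$. You reduce it to acyclicity of $\cP_\ZZ$ on open stars plus a final ``check'' that the resulting finite \v Cech complex has no $H^1$, and you yourself flag the acyclicity as where the real work lies without supplying it. The local rigidity you invoke --- that restriction $\cP_\ZZ(\mathrm{St}(\sigma))\to\cP_\ZZ(U)$ is an isomorphism for connected $U$ meeting the interior of~$\sigma$ --- does not by itself give acyclicity of the star, since $H^1$ of the star is governed by opens that avoid~$\sigma$ (the punctured star of a vertex retracts onto the link, where $\cP_\ZZ$ behaves like a locally constant sheaf that can well have nonzero $H^1$). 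The paper closes this with a structural observation you are missing: a section of $\cP_\ZZ$ on a small open set is equivalent to the integral values of its affine extensions at the vertices of the relevant simplices, and collecting the data attached to a fixed vertex~$v$ identifies the entire \v Cech complex of $\cP_\ZZ$ with the direct product, over vertices~$v$, of the \v Cech complexes of the constant sheaf $\ZZ$ on the cone $K_v$ over $\link_\Delta(v)$ with respect to the pulled-back cover; the vanishing in positive degrees then follows because each $K_v$ is contractible. Two secondary problems: your requirement that all finite intersections of cover elements be open stars is not achievable on a $\Delta$-complex (two open stars can meet in a disconnected set when simplices are multiply incident), which is why the paper uses small convex neighborhoods rather than literal stars; and your fallback via $0\to\cP_\ZZ\to\cP_{\mathrm{aff}}\to\underline{\RR/\ZZ}\to 0$ does not help, since $\cP_{\mathrm{aff}}$ is non-soft for the same reason and its $H^1$-vanishing requires the identical missing argument.
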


\begin{proof}
To prove this, we repeat the construction from before
Proposition~\ref{prop:h-1}, but for ridge divisors. Thus, we let $\cP_\ZZ$ be
the sheaf of piecewise linear
functions which are linear on each simplex and whose extensions take an integral
value at each vertex. Then, the global sections of $\cP_\ZZ / \cA_\ZZ$
correspond to Cartier ridge divisors and those which are linearly trivial are
defined by global sections of~$\cP_\ZZ$. We have a long exact sequence in
cohomology:
\begin{equation*}
\rightarrow
H^0(\Delta, \cP_\ZZ) \rightarrow
H^0(\Delta, \cP_\ZZ / \cA_\ZZ) \rightarrow
H^1(\Delta, \cA_\ZZ) \rightarrow
H^1(\Delta, \cP_\ZZ) \rightarrow,
\end{equation*}
from which
the proposition will follow if we can prove that $H^1(\Delta, \cP_\ZZ)$
vanishes.

In fact, we will show that $H^i(\Delta, \cP_\ZZ) = 0$ for all $i > 0$ via a
computation of \v Cech cohomology~\cite[Thm.~II.5.10.1]{godement}. Thus, we have
an open cover $\{U_i\}$ which we assume to be sufficiently refined such that
each $U_i$ and only intersects a single simplex~$s_i$ together with
simplices which have $s_i$ as a face and each such intersection is convex.
Moreover, if $U_i$ are small enough, then $U_i \cap U_j$ is non-empty only if
$s_i$ is a face of~$s_j$ or vice versa. As usual, we denote the intersection of
open sets $U_{i_1} \cap \cdots \cap U_{i_k}$ by $U_{i_1, \ldots, i_k}$. Each
non-empty $U_{i_1, \ldots, i_k}$ only intersects a single simplex $s_{i_1,
\ldots, i_k}$ and the simplices containing $s_{i_1, \ldots, i_k}$, where
$s_{i_1, \ldots, i_k}$ is the largest of the simplices $s_{i_1}, \ldots,
s_{i_k}$. A section of $\cP_\ZZ$
on $U_{i_1, \ldots, i_k}$ is equivalent to the values of the linear extension at
each vertex. Thus, the group of \v Cech $k$-cocycles of $\cP_\ZZ$ is the direct
product:
\begin{equation}\label{eq:cech-cocycles-1}
C^k\big(\{U_i\}; \cP_\ZZ\big) = 
\prod_{i_1 < \ldots < i_k} \prod_{v} \ZZ,
\end{equation}
where $v$ ranges over the vertices of the parametrizing simplex of $s_{i_1,
\ldots, i_k}$ and of the simplices containing it, up to identifications which
also contain $s_{i_1, \ldots, i_k}$.

We now reinterpret the factors of the \v Cech $k$-cocycles
in~(\ref{eq:cech-cocycles-1}) coming from a fixed
vertex~$v$. Let $K_v$ be the cone over $\link_\Delta(v)$. Then there exists a
natural map $\pi_v \colon K_v \rightarrow \Delta$ sending the cone point to~$v$,
and where the preimage of a point in the
interior of a simplex~$s$ consists distinct points
corresponding to the vertices of the parametrizing simplex of~$s$ identified
with~$v$. Thus, the number of times~$v$ shows up in the \v Cech $k$-cocycles is
equal to the number of connected components of $\pi_v^{-1}(U_{i_1, \ldots,
i_k})$. We write $\ZZ$ for the sheaf of locally constant integer-valued
functions on~$K_v$ and then we can rewrite the group of \v Cech $k$-cocycles
from~(\ref{eq:cech-cocycles-1}) as:
\begin{equation}\label{eq:cech-cocycles-2}
\prod_{v \in \Delta_0}  \prod_{i_1 < \cdots < i_k}
\ZZ\big(\pi_v^{-1}( U_{i_1, \ldots, i_k})\big) = \prod_{v \in \Delta_0}
C^k\big(\{\pi^{-1}(U_i)\}; \ZZ\big).
\end{equation}
These equalities are compatible with the boundary maps and refinements, so
if we take the limit over all refinements of $\{U_i\}$, then the limit of the
cohomology of the right hand side of~(\ref{eq:cech-cocycles-2}) computes the
$\ZZ$-cohomology of $K_v$. However, the integer cohomology of $K_v$ vanishes
for $k > 0$ because $K_v$ is contractible. Therefore, the cohomology
$H^i(\Delta, \cP_{\ZZ})$ vanishes for $i > 0$, which completes the proof.
\end{proof}

Having the cohomological interpretation for $\Pic_{\ridge}(\Delta)$ allows us
to have an analogue for ridge divisors of the exponential
sequence~(\ref{eq:exp}):
\begin{equation}\label{eq:exp-ridge}
\rightarrow
H^0(\Delta, \cD) \rightarrow H^1(\Delta, \ZZ) \rightarrow
\Pic_{\ridge}(\Delta) \rightarrow
H^1(\Delta, \cD) \rightarrow H^2(\Delta, \ZZ) \rightarrow
\end{equation}
The quotient $\cA_\ZZ / \ZZ$ is isomorphic to the
sheaf~$\cD$ from before because $\cA$ and $\cA_{\ZZ}$ only differ by the
allowable constant terms of the linear functions.
Composing the map
$\Pic(\Delta) \rightarrow H^1(\Delta, \cD)$ from (\ref{eq:exp}) with the last
map $H^1(\Delta, \cD) \rightarrow H^2(\Delta, \ZZ)$ of (\ref{eq:exp-ridge}), we
get a map $\Pic(\Delta) \rightarrow H^2(\Delta, \ZZ)$.

\begin{prop}\label{prop:ridge-obstruction}
Let $D$ be an element of the Picard group of~$\Delta$. Then, the
image of~$D$ under the above map lies in the torsion subgroup $H^2(\Delta,
\ZZ)_{\tors}$ and is trivial if and only if $D$ is algebraically equivalent to a
ridge divisor.
\end{prop}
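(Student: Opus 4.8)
The plan is to analyze the composite map $\Pic(\Delta) \to H^1(\Delta,\cD) \to H^2(\Delta,\ZZ)$ by tracking a single divisor class through the two long exact sequences (\ref{eq:exp}) and (\ref{eq:exp-ridge}) and comparing them via the commutative diagram relating the sheaves $\ZZ \subset \cA_\ZZ$, $\RR \subset \cA$, and their common quotient $\cD$. First I would set up the morphism of short exact sequences of sheaves $0 \to \ZZ \to \cA_\ZZ \to \cD \to 0$ and $0 \to \RR \to \cA \to \cD \to 0$, induced by the inclusions $\ZZ \hookrightarrow \RR$ and $\cA_\ZZ \hookrightarrow \cA$ (both compatible, since $\cA$ and $\cA_\ZZ$ differ only in the allowed constant terms, as noted after (\ref{eq:exp-ridge})). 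Passing to the long exact sequences gives a commutative ladder, and I would chase the class of $D \in H^1(\Delta,\cA) = \Pic(\Delta)$ through it.

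For the torsion claim: the Chern class $c(D) \in H^1(\Delta,\cD) = \NS(\Delta)$ of a genuine Cartier divisor lifts, by definition of $\Pic(\Delta) \to H^1(\Delta,\cD)$ from (\ref{eq:exp}), to $H^1(\Delta,\cA)$, hence its image in $H^2(\Delta,\RR)$ vanishes by exactness of (\ref{eq:exp}). But the map $H^1(\Delta,\cD) \to H^2(\Delta,\ZZ)$ from (\ref{eq:exp-ridge}) followed by the change-of-coefficients map $H^2(\Delta,\ZZ) \to H^2(\Delta,\RR)$ equals the map $H^1(\Delta,\cD) \to H^2(\Delta,\RR)$ from (\ref{eq:exp}), by commutativity of the ladder. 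Therefore the image of $c(D)$ in $H^2(\Delta,\ZZ)$ dies in $H^2(\Delta,\RR)$, so it lies in $\ker\big(H^2(\Delta,\ZZ) \to H^2(\Delta,\RR)\big)$, which is exactly $H^2(\Delta,\ZZ)_{\tors}$ since $H^2(\Delta,\ZZ) \otimes \RR \to H^2(\Delta,\RR)$ is an isomorphism ($\Delta$ being a finite $\Delta$-complex, its cohomology is finitely generated).

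For the second claim, I would argue that the image of $D$ in $H^2(\Delta,\ZZ)$ is zero if and only if $c(D) \in H^1(\Delta,\cD)$ lies in the image of $\Pic_{\ridge}(\Delta) \to H^1(\Delta,\cD)$, by exactness of (\ref{eq:exp-ridge}) at $H^1(\Delta,\cD)$. So it remains to identify "$c(D)$ comes from a ridge divisor" with "$D$ is algebraically equivalent to a ridge divisor." One direction is easy: if $D$ is algebraically equivalent to a ridge divisor $D'$, then $c(D) = c(D')$ is by definition the image of $D' \in \Pic_{\ridge}(\Delta)$ under $\Pic_{\ridge}(\Delta) \to \Pic(\Delta) \to H^1(\Delta,\cD)$, which factors through $\Pic_{\ridge}(\Delta) \to H^1(\Delta,\cD)$ (the two ways of computing the Chern class of a ridge divisor agree, since the ladder commutes). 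Conversely, if $c(D)$ is the Chern class of some ridge divisor $D'$, then $D - D'$ (viewing $D'$ in $\Pic(\Delta)$) has trivial Chern class in $H^1(\Delta,\cD)$, i.e.\ $D$ and $D'$ are algebraically equivalent by the definition of algebraic equivalence.

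The main obstacle I anticipate is purely bookkeeping: verifying that the Chern class map $\Pic_{\ridge}(\Delta) \to H^1(\Delta,\cD)$ appearing in (\ref{eq:exp-ridge}) is genuinely compatible with the Chern class map $\Pic(\Delta) \to H^1(\Delta,\cD)$ from (\ref{eq:exp}) under the forgetful map $\Pic_{\ridge}(\Delta) \to \Pic(\Delta)$ — that is, that the quotient $\cA_\ZZ/\ZZ \cong \cD \cong \cA/\RR$ is the \emph{same} identification used in both sequences, so that the ladder of long exact sequences actually commutes. Once that naturality is pinned down, all three statements follow by diagram chases and the standard fact that the torsion subgroup of $H^2(\Delta,\ZZ)$ is the kernel of the map to $H^2(\Delta,\RR)$.
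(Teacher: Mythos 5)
Your proposal is correct and follows essentially the same route as the paper: the morphism of exponential sequences induced by $\ZZ\hookrightarrow\RR$ and $\cA_\ZZ\hookrightarrow\cA$, vanishing in $H^2(\Delta,\RR)$ by exactness of (\ref{eq:exp}) to get the torsion claim, and exactness of (\ref{eq:exp-ridge}) at $H^1(\Delta,\cD)$ for the second claim. You simply spell out the naturality and bookkeeping that the paper's two-sentence proof leaves implicit.
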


\begin{proof}
From the inclusions $\ZZ \rightarrow \RR$ and $\cA_{\ZZ} \rightarrow \cA$, we
have a map to the exponential sequence~(\ref{eq:exp}) from that for ridge
divisors~(\ref{eq:exp-ridge}). By exactness of the former, the image of~$D$ in
$H^2(\Delta, \RR)$ is trivial, so it lies in the kernel of $H^2(\Delta, \ZZ)
\rightarrow H^2(\Delta, \RR)$, which is the torsion subgroup, therefore proving
the first claim. Then, the second part of the claim then follows from the
exactness of~(\ref{eq:exp-ridge}).
\end{proof}

\begin{cor}\label{cor:multiple-ridge}
If $\Delta$ is a weak tropical complex, then there exists a positive integer $m$
such that for any Cartier divisor~$D$ on~$\Delta$, the multiple $mD$ is
algebraically equivalent to a ridge divisor.
\end{cor}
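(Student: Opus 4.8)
The plan is to deduce this from Proposition~\ref{prop:ridge-obstruction} by a finiteness argument. That proposition attaches to each Cartier divisor~$D$ an element of the torsion subgroup $H^2(\Delta, \ZZ)_{\tors}$, and $D$ is algebraically equivalent to a ridge divisor exactly when this element vanishes. Since $\Delta$ is a finite $\Delta$-complex, $H^2(\Delta,\ZZ)$ is a finitely generated abelian group, so its torsion subgroup $H^2(\Delta,\ZZ)_{\tors}$ is finite; let $m$ be its exponent (for instance its order). The obstruction map $\Pic(\Delta)\to H^2(\Delta,\ZZ)_{\tors}$ is a group homomorphism, being a composition of maps from long exact sequences in cohomology, so the obstruction attached to $mD$ is $m$ times the obstruction attached to~$D$, which is~$0$ in a group of exponent~$m$. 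Applying Proposition~\ref{prop:ridge-obstruction} to $mD$ then shows that $mD$ is algebraically equivalent to a ridge divisor, and this $m$ works uniformly for every Cartier divisor~$D$.

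The key steps, in order, are: (i) observe that $H^2(\Delta,\ZZ)$ is finitely generated since $\Delta$ is a finite simplicial (or $\Delta$-) complex, so $H^2(\Delta,\ZZ)_{\tors}$ is finite; (ii) set $m$ to be the exponent of this finite group; (iii) note that the obstruction map from Proposition~\ref{prop:ridge-obstruction} is additive, so multiplication by~$m$ annihilates every obstruction; (iv) invoke the ``if and only if'' in Proposition~\ref{prop:ridge-obstruction} to conclude that $mD$ is algebraically equivalent to a ridge divisor for every Cartier divisor~$D$.

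I do not anticipate a substantial obstacle here; the content has all been front-loaded into Proposition~\ref{prop:ridge-obstruction}, and what remains is the elementary group theory that a finitely generated abelian group has finite torsion part with a well-defined exponent. The only point requiring a word of care is that the map $\Pic(\Delta)\to H^2(\Delta,\ZZ)$ is genuinely a homomorphism of abelian groups, which is immediate from its construction as a composite of connecting homomorphisms and induced maps of long exact sequences; with that in hand the argument is a one-line application of the exponent of $H^2(\Delta,\ZZ)_{\tors}$.
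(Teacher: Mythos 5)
Your proof is correct and is exactly the argument the paper intends: the corollary is stated without proof as an immediate consequence of Proposition~\ref{prop:ridge-obstruction}, and the finiteness of $H^2(\Delta,\ZZ)_{\tors}$ together with taking $m$ to be its exponent is the implicit reasoning. Nothing further is needed.
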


\begin{cor}\label{cor:finitely-generated}
If $\Delta$ is a weak tropical complex, then $NS(\Delta)$ is finitely generated.
\end{cor}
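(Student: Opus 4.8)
\emph{Proof proposal.} Since $\NS(\Delta)$ is by definition the image of $\Pic(\Delta) = H^1(\Delta,\cA)$ in $H^1(\Delta,\cD)$, it is in particular a subgroup of $H^1(\Delta,\cD)$, so the statement would follow at once from finite generation of $H^1(\Delta,\cD)$, which holds on general grounds because $\cD$ is a constructible sheaf with finitely generated stalks on the finite complex~$\Delta$. The plan, however, is to give an argument that stays within the machinery already set up, by reducing to ridge divisors: the key inputs are Corollary~\ref{cor:multiple-ridge}, which says that a fixed multiple of every Cartier divisor is algebraically equivalent to a ridge divisor, together with the ridge exponential sequence~(\ref{eq:exp-ridge}).

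First I would record that $\Pic_{\ridge}(\Delta)$ is finitely generated: by definition it is a quotient of the group of Cartier ridge divisors, which is a subgroup of the free abelian group on the ridges of~$\Delta$, and the latter has finite rank because $\Delta$ is finite. Let $N$ be the image of $\Pic_{\ridge}(\Delta)$ under the map $\Pic_{\ridge}(\Delta) \to H^1(\Delta,\cD)$ in~(\ref{eq:exp-ridge}); being a quotient of $\Pic_{\ridge}(\Delta)$ it is finitely generated, and via the morphism from~(\ref{eq:exp-ridge}) to~(\ref{eq:exp}) described in the proof of Proposition~\ref{prop:ridge-obstruction}, $N$ is contained in $\NS(\Delta)$. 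Now let $m$ be the integer supplied by Corollary~\ref{cor:multiple-ridge}: for every Cartier divisor~$D$ the multiple $mD$ is algebraically equivalent to a ridge divisor, which is precisely the statement that $m\cdot\NS(\Delta) \subseteq N$, so $\NS(\Delta)/N$ is annihilated by~$m$. On the other hand, exactness of~(\ref{eq:exp-ridge}) identifies $N$ with the kernel of $H^1(\Delta,\cD) \to H^2(\Delta,\ZZ)$, so $\NS(\Delta)/N$ injects into $H^1(\Delta,\cD)/N$ and hence into $H^2(\Delta,\ZZ)$, which is finitely generated since $\Delta$ is a finite complex. A finitely generated abelian group annihilated by~$m$ is finite, so $\NS(\Delta)/N$ is finite; therefore $\NS(\Delta)$, an extension of a finite group by the finitely generated group~$N$, is finitely generated.

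Given the preceding results, the remaining work is light, and I do not anticipate a genuine obstacle. The one point that requires care is that $m\cdot\NS(\Delta) \subseteq N$ with $N$ finitely generated does not on its own force $\NS(\Delta)$ to be finitely generated, which is exactly why the argument also needs the embedding of $\NS(\Delta)/N$ into the finitely generated group $H^2(\Delta,\ZZ)$ coming from~(\ref{eq:exp-ridge}); alternatively, one could bypass this by invoking Proposition~\ref{prop:ridge-obstruction} directly, noting that the composite $\NS(\Delta) \hookrightarrow H^1(\Delta,\cD) \to H^2(\Delta,\ZZ)$ has finite image, since it lands in the torsion subgroup $H^2(\Delta,\ZZ)_{\tors}$, and kernel $\NS(\Delta)\cap N$, a subgroup of the finitely generated group~$N$.
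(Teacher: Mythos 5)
Your argument is correct and is essentially the paper's proof: the paper deduces the result directly from the exact sequence $\Pic_{\ridge}(\Delta) \rightarrow \NS(\Delta) \rightarrow H^2(\Delta,\ZZ)_{\tors}$ of Proposition~\ref{prop:ridge-obstruction}, together with finite generation of $\Pic_{\ridge}(\Delta)$ and $H^2(\Delta,\ZZ)_{\tors}$ for a finite complex --- which is exactly the ``alternative'' you describe at the end, and your main route via Corollary~\ref{cor:multiple-ridge} is only a cosmetic variant of it.
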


\begin{proof}
By Proposition~\ref{prop:ridge-obstruction}, we have an exact sequence
\begin{equation*}
\Pic_{\ridge}(\Delta) \rightarrow \NS(\Delta) \rightarrow
H^2(\Delta, \ZZ)_{\tors}
\end{equation*}
Since $\Delta$ is a finite complex, both $\Pic_{\ridge}(\Delta)$ and
$H^2(\Delta, \ZZ)_{\tors}$ are finitely generated, so $\NS(\Delta)$ is also
finitely generated.
\end{proof}

\begin{ex}\label{ex:octahedron-quotient}
\begin{figure}
\includegraphics{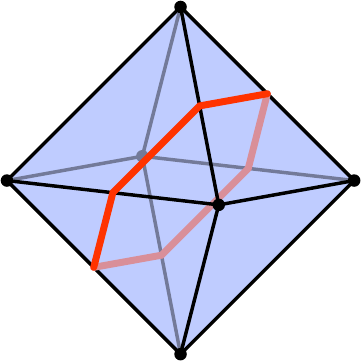}
\caption{If we quotient by octahedron by the involution formed by negating the
coordinates, then the image of the pictured red divisor is not algebraically
equivalent to any linear combination of ridge divisors.}
\label{fig:octahedron-quotient}
\end{figure}
Let $\widetilde\Delta$ be the boundary of the octahedron with all structure
constants
$\alpha(v, e)$ set to~$1$. We take $\Delta$ to be the quotient of
$\widetilde\Delta$ by the involution of the octahedron which takes each point to
its opposite. Let $D$ be the divisor on~$\Delta$ which is the quotient of the
cycle shown in red in Figure~\ref{fig:octahedron-quotient}. We claim that $D$ is
not
algebraically equivalent to any linear combination of ridge divisors. Let $e$ be
the sum of the $3$~edges of~$\Delta$ which intersect~$D$. Each
point of intersection has multiplicity~$1$ and so the intersection number of~$D$
with~$e$ is~$3$. On the other hand, the intersection number of any ridge with
$e$ is either $2$ or~$0$, so no integral combination of ridges can be
have intersection number~$3$ with~$e$.
However, since $H^2(\Delta, \ZZ) = \ZZ/2$, we know that twice $D$ must be
algebraically equivalent to a ridge divisor. Explicitly, $2D$ is linearly
equivalent to the sum of the $3$ edges which it doesn't intersect.
\end{ex}

In this paper, ridge divisors will be useful primarily for computational
purposes. However, on a graph, the group of linear equivalence classes of
ridge divisors which are algebraically trivial has the same cardinality as the
set of spanning trees by Kirchhoff's matrix tree theorem. It is natural to
wonder whether there is a generalization of the matrix tree theorem to
higher-dimensional weak tropical complexes.
\begin{quest}
Is there a combinatorial interpretation for the order of the group of
algebraically trivial ridge divisors modulo linear equivalence?
\end{quest}
\noindent In Example~\ref{ex:hopf}, we'll see that this group may be infinite.

\section{Intersection pairing}\label{sec:pairing}

We now return to tropical surfaces where we study the intersection pairing on
divisors, which is the bilinear map which takes a pair of divisors to the degree
of their intersection product.
The essential result from the previous
section is Proposition~\ref{prop:ridge-obstruction} which allows us to work with
ridge divisors.

We start by constructing a matrix which constitutes a global version of the
matrices $M_{p, \Sigma}$ from Section~\ref{sec:local}, except that by working
only with ridge divisors, there is no need for the choice of a local cone
complex. We let $M_\Delta$ denote the block diagonal matrix formed by taking all
the matrices $M_v$ as $v$ ranges over the vertices of~$\Delta$. Thus, $M_\Delta$
is a symmetric matrix whose rows and columns are indexed by a vertex~$v$
of~$\Delta$ and a vertex of the link at~$v$, or, equivalently, an edge~$e$
of~$\Delta$ together with an endpoint of~$e$. Throughout this section, we
write $\QQ^N$ for space of vectors whose entries are indexed by the pair of a
vertex~$v$ and an edge~$e$ with $v$ as one endpoint.

In the same way that, in Section~\ref{sec:local}, $M_{p,\Sigma}$ computed the
divisor of a PL function which is linear on~$\Sigma$, $M_\Delta$ can compute the
divisor of a global PL function~$\phi$, which is linear in the ordinary sense on
each simplex. We encode $\phi$ in a vector $\bff_\phi \in \QQ^N$ where
$(\bff_\phi)_{v, e} = \phi(w) - \phi(v)$, where $w$ is the endpoint of~$e$ other
than~$v$. Then, by Lemma~\ref{lem:local-intersection-matrix}, $\mathbf d = M_\Delta \bff_{\phi}$ encodes the divisor of~$\phi$,
but redundantly because the coefficient of $e$ is given as $\mathbf d_{v,e} =
\mathbf d_{w,e}$.

Moreover, given any Cartier divisor~$D$ on~$\Delta$, there is, by definition, a
local defining equation~$\phi_v$ for~$D$ around every vertex~$v$. If we set
$\bff_{v, e}$ equal to the slope of~$\phi$ moving away from $v$ along~$e$,
similar to the construction in Section~\ref{sec:local}, then the coefficients
of~$D$ are encoded in $\mathbf d = M_\Delta \bff$, where the coefficient of an
edge~$e$ is given by $\mathbf d_{v,e} = \mathbf d_{w,e}$. Moreover, this
equality $\mathbf d_{v,e} = \mathbf d_{w,e}$ encodes the compatibility
conditions for a divisor so that if $\bff$ is any integral or rational vector
such that $M_\Delta \bff$ satisfies these equalities, then $\bff$ encodes the
local defining equations of a Cartier or $\QQ$-Cartier divisor, respectively.

\begin{prop}\label{prop:matrix-intersections}
Let $M_\Delta$ be the matrix described above. If $\bff$ and $\bff'$ are vectors
encoding the local defining equations of Cartier divisors $D$ and $D'$
respectively, then $\bff^T M_\Delta \bff'$ equals the total degree of the
product $D \cdot D'$.
\end{prop}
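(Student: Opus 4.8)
The plan is to reduce the global statement to a sum of local contributions, one at each vertex, and then identify each local contribution with the corresponding block of $M_\Delta \bff'$. Concretely, the degree of the intersection product $D \cdot D'$ is, by definition, a sum over the support of $D \cap D'$ of local intersection multiplicities; since $D'$ is supported on a union of edges and isolated points, and since $D$ meets $D'$ in a finite set of points, I would first argue that only the vertices of $\Delta$ contribute. This is because if $D$ is represented near a point $p$ by the PL function $\phi'$ coming from $\bff'$, then the intersection number of $D$ with $D'$ at a point $p$ in the relative interior of an edge can be computed by the formula of Proposition~\ref{prop:curve-divisor}: it is $\bff \cdot \mathbf d'$ for the local cone complex at $p$, but by Lemma~\ref{lem:local-matrix}'s proof the matrix $M_{p,\Sigma}$ at an interior-of-edge point has the block form \eqref{eq:edge-local-matrix}, whose relevant diagonal entries are zero, forcing the contribution from a linear-on-simplices $\phi'$ to vanish there; similarly interior-of-facet points contribute nothing. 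So the total degree equals the sum over vertices $v$ of the local intersection number at $v$.

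Next, at a vertex $v$, I would take $\Sigma$ to be the local cone complex induced by the facets containing $v$, so that $M_{v,\Sigma} = M_v$ by Lemma~\ref{lem:local-intersection-matrix}. The rays of $\Sigma$ are exactly the edges $e$ incident to $v$ (with endpoint identification), so the restriction of $\bff$ (resp. $\bff'$) to the block indexed by pairs $(v,e)$ is precisely the slope vector of the local defining equation $\phi_v$ (resp. $\phi'_v$) at $v$. Writing $\bff^{(v)}$ and $(\bff')^{(v)}$ for these restrictions, Proposition~\ref{prop:intersection-sym} (or directly the computation in its proof) shows that the local intersection number of $D$ and $D'$ at $v$ equals $(\bff^{(v)})^T M_v (\bff')^{(v)}$: indeed $M_v (\bff')^{(v)}$ is the coefficient vector $\mathbf d'$ of $D'$ near $v$ by the discussion preceding the proposition, and pairing with the slopes $\bff^{(v)}$ of $\phi_v$ computes the degree of $\phi_v$ restricted to $D'$, which is the local intersection multiplicity of $D = \div(\phi_v)$ with $D'$ at $v$.

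Finally, because $M_\Delta$ is by construction the block-diagonal matrix with blocks $M_v$, we have
\begin{equation*}
\bff^T M_\Delta \bff' = \sum_{v \in \Delta_0} (\bff^{(v)})^T M_v (\bff')^{(v)},
\end{equation*}
and by the previous paragraph each summand is the local intersection number of $D$ and $D'$ at $v$, while by the first step these local numbers sum to $\deg(D \cdot D')$. This completes the argument. The main obstacle I anticipate is the bookkeeping in the first step: one must be careful that the "local defining equation" $\phi_v$ at a vertex, extended linearly on simplices, genuinely computes the intersection multiplicity there even when $D$ and $D'$ share an edge through $v$ — i.e., that the edge contributions are correctly apportioned to the two endpoints and not double-counted — and that the degree of the intersection product is insensitive to replacing $D'$ by a linearly equivalent divisor supported on edges, which is where Proposition~\ref{prop:intersect-alg} together with Corollary~\ref{cor:multiple-ridge} (or a direct local argument) is needed to justify that it suffices to treat the case where the local equations are linear on simplices.
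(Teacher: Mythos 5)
Your proposal is correct and follows essentially the same route as the paper's (much terser) proof: the block-diagonal structure of $M_\Delta$ splits $\bff^T M_\Delta \bff'$ into per-vertex contributions, each identified with the local intersection multiplicity via the computation in the proof of Proposition~\ref{prop:intersection-sym}. Your additional verification that points in the interiors of edges and facets contribute nothing is a reasonable elaboration of a step the paper leaves implicit.
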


\begin{proof}
Because of the block diagonal structure of~$M_\Delta$, the product $\bff^T
M_\Delta \bff'$ computes the sum of local contributions for each vertex~$v$.
Those contributions are the multiplicity at~$v$ of the intersection product~$D
\cdot D'$ as in the proof of Proposition~\ref{prop:intersection-sym}.
\end{proof}

Section~\ref{sec:alg} presented algebraic equivalence in a sheaf-theoretic way,
but we now give an explicit description of the map $H^1(\Delta, \ZZ)
\rightarrow \Pic(\Delta)$ from the exponential sequence~(\ref{eq:exp-ridge}) in
terms of simplicial cohomology.
Let $\gamma$ be a simplicial 1-cochain on~$\Delta$, so a function from the
oriented edges of~$\Delta$ to~$\ZZ$, such that reversing orientation on an edge
negates the value of~$\gamma$. We will construct a Cartier divisor from~$\gamma$
by giving the local defining equations as a vector~$\bff_\gamma \in \ZZ^N$. For
any vertex $v$ and an incidence of an edge~$e$ to~$v$, we set
$(\bff_\gamma)_{v,e} = \gamma(e)$, where we consider $e$ to be oriented away
from~$v$.

\begin{lem}\label{lem:cocycle}
If $\gamma$ is a 1-cocycle, then $\bff_\gamma$ as above defines a Cartier
divisor. Moreover, the class of this Cartier divisor agrees with the image
of the cohomology class of~$\gamma$ under the map $H^1(\Delta, \ZZ) \rightarrow
\Pic_{\ridge}(\Delta)$ from~(\ref{eq:exp-ridge}).
\end{lem}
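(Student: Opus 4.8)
The plan is to verify two things: first that $\bff_\gamma$ genuinely encodes a Cartier divisor, and second that the resulting class is the image of $[\gamma]$ under the connecting map in~(\ref{eq:exp-ridge}). For the first claim, recall from the discussion preceding Proposition~\ref{prop:matrix-intersections} that a vector $\bff \in \ZZ^N$ encodes the local equations of a Cartier divisor precisely when $\mathbf d = M_\Delta \bff$ satisfies $\mathbf d_{v,e} = \mathbf d_{w,e}$ for every edge $e$ with endpoints $v$ and $w$. So I would compute $\mathbf d_{v,e}$ directly from the formula for $M_v$: the diagonal contribution is $-\alpha(w,e)\gamma(e)$ plus a loop term, and the off-diagonal contributions sum $\gamma$ over the edges of $\link_\Delta(v)$ emanating from the vertex $t$ corresponding to $e$. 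Because $\gamma$ is a cocycle, the sum of $\gamma$ around each triangle (facet) of $\Delta$ vanishes; this is exactly what is needed to rewrite the off-diagonal sum at $v$ in terms of the facet data so that, after using the constraint~(\ref{eq:constraint}) $\alpha(v,e)+\alpha(w,e) = \deg(e)$, the expressions $\mathbf d_{v,e}$ and $\mathbf d_{w,e}$ agree. Concretely, each facet containing $e$ contributes $\gamma$ of its third vertex (relative to $v$) to $\mathbf d_{v,e}$, and the cocycle condition on that facet converts this into a symmetric expression in $v$ and $w$; the asymmetric terms $-\alpha(v,e)\gamma(e)$ versus $-\alpha(w,e)\gamma(e)$ are reconciled using~(\ref{eq:constraint}). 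I expect this computation to be the main obstacle, since one must be careful about orientations (the sign of $\gamma(e)$ flips when $e$ is reoriented from $v$ to $w$) and about the loop/identification subtleties in $\Delta$-complexes, but it is essentially a bookkeeping exercise with formula~(\ref{eq:linear-on-simplices}).

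For the second claim, I would trace through the construction of the connecting homomorphism in the long exact sequence for $0 \to \ZZ \to \cA_\ZZ \to \cD \to 0$, using the \v{C}ech description already employed in the proof of Proposition~\ref{prop:ridge-divisors}. Refine the open cover $\{U_i\}$ as in that proof so that each $U_i$ meets only one simplex and its cofaces. A $1$-cocycle $\gamma$, viewed in $\cD = \cA_\ZZ/\ZZ$, lifts locally to a linear function $\ell_{ij}$ on $U_i \cap U_j$ (well-defined up to an integer constant once we fix a base vertex in each overlap), and the image in $\Pic_{\ridge}(\Delta) = H^1(\Delta, \cA_\ZZ)$ is represented by the \v{C}ech $1$-cocycle $\ell_{ij} - \ell_{jk} + \ell_{ik}$, which has integer jumps. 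I would then match this against the Cartier divisor data $\{\phi_v\}$ corresponding to $\bff_\gamma$: around a vertex $v$ the local defining equation is the PL function whose outgoing slope along each edge $e$ equals $\gamma(e)$ (with $e$ oriented away from $v$), and the transition functions $\phi_v - \phi_w$ between neighboring charts must be shown to be cohomologous, as elements of $H^1(\Delta, \cA_\ZZ)$, to the \v{C}ech representative built from $\gamma$. The key point is that both constructions extract the same "derivative data" — the value $\gamma(e)$ is simultaneously the slope of $\phi_v$ along $e$ and the value of the cochain $\gamma$ on $e$ — so the identification reduces to comparing two explicit \v{C}ech $1$-cocycles that differ by a coboundary.

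Alternatively, and perhaps more cleanly, I would invoke the commutative diagram of exponential sequences already set up in the proof of Proposition~\ref{prop:ridge-obstruction}: the map $H^1(\Delta, \ZZ) \to \Pic_{\ridge}(\Delta)$ fits into the sequence~(\ref{eq:exp-ridge}) coming from $0 \to \ZZ \to \cA_\ZZ \to \cD \to 0$. Since $\gamma$ represents a class in $H^1(\Delta, \ZZ) = H^1(\Delta, \RR_{\ZZ})$ mapping to $0$ in $H^1(\Delta,\cD)$, its image in $\Pic_{\ridge}(\Delta)$ is the obstruction to lifting $\gamma$ to a global linear function; the divisor $\bff_\gamma$ is algebraically trivial (it has trivial Chern class by construction, since its local equations differ by constants), and one checks it represents exactly that obstruction class. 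I would phrase the argument so that verifying the \v{C}ech-level identity — that the transition data of $\bff_\gamma$ is the \v{C}ech differential applied to a lift of $\gamma$ — is the only thing that needs checking, and this follows because $(\bff_\gamma)_{v,e} = \gamma(e)$ was defined precisely to make the local equation at $v$ a primitive, along edges, of the cochain $\gamma$. The orientation conventions must be pinned down once and then carried consistently, which is where errors would creep in, but no deep input beyond the machinery already developed is required.
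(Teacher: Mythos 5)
Your proposal reaches the right conclusion, but by a longer route than the paper. The paper's entire proof is a single computation: on each facet $f$ containing the edge $e_{vw}$, with third vertex $u$, it evaluates the two local equations at the three vertices ($\phi_v(v)=0$, $\phi_v(w)=\gamma(e_{vw})$, $\phi_v(u)=\gamma(e_{vu})$, and similarly for $\phi_w$) and observes that $\phi_v - \phi_w$ equals the constant $\gamma(e_{vw})$ on $f$ precisely because of the cocycle relation $\gamma(e_{vw}) - \gamma(e_{vu}) + \gamma(e_{wu}) = 0$. That one observation does both jobs at once: constancy is stronger than linearity, so the data is Cartier, and the resulting \v Cech $1$-cocycle of constants $\gamma(e_{vw})$ on the components of $U_v \cap U_w$ is literally the image of $\gamma$ under the map induced by the sheaf inclusion $\ZZ \hookrightarrow \cA_\ZZ$. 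Your first-paragraph computation (checking $\mathbf d_{v,e} = \mathbf d_{w,e}$ via $M_\Delta$) is valid --- it is essentially the computation of Lemma~\ref{lem:criterion-alg-trival} run in the forward direction --- but it only establishes the Cartier condition, and you must still do the facet-level constancy computation to get the second claim, so you end up doing strictly more work. Two caveats on your second and third paragraphs: the map $H^1(\Delta,\ZZ) \rightarrow \Pic_{\ridge}(\Delta)$ is not a connecting homomorphism but is induced by the inclusion $\ZZ \rightarrow \cA_\ZZ$, so there is no lifting through $\cD$ to perform --- the image of $[\gamma]$ is simply the \v Cech cocycle of integer constants; and the assertion that the local equations of $\bff_\gamma$ ``differ by constants by construction'' is not automatic --- that constancy is exactly where the cocycle hypothesis enters, so it cannot be assumed up front without making the argument circular.
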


\begin{proof}
The vector~$\bff$ defines a system of local equations~$\phi_v$ in a
neighborhood~$U_v$ of each vertex~$v$ of~$\Delta$. To show that these define a
Cartier divisor, we need to show that the difference of functions is linear and
to show it is in the image of $H^1(\Delta, \ZZ)$, we need to show that it is
locally constant. Let $v$ and~$w$ be two vertices with an edge~$e_{vw}$ between
them, oriented from~$v$ to~$w$. Let $f$ be a facet containing~$e_{vw}$ with $u$
its other vertex and $e_{vu}$ and $e_{wu}$ its edges, both oriented toward $u$.
The local equations on~$f$ take the values:
\begin{align*}
\phi_v(v) &= 0 & \phi_v(w) &= \gamma(e_{vw}) & \phi_v(u) &= \gamma(e_{vu}) \\
\phi_w(v) &= -\gamma(e_{vw}) & \phi_w(w) &= 0 & \phi_w(u) &= \gamma(e_{wu})
\end{align*}
Thus, $\phi_v - \phi_w$ will be constant on the interior of~$f$ and equal to
$\gamma(e_{vw})$ by the cocycle condition $\gamma(e_{vw}) - \gamma(e_{vu}) +
\gamma(e_{wu}) = 0$. Moreover, the \v Cech cocycle which has value
$\gamma(e_{vw})$ on the component of $U_v \cap U_w$ containing~$e$ defines the
same cohomology class as $\gamma$, which finishes the proof.
\end{proof}

Lemma~\ref{lem:cocycle} shows that the cocycle condition on~$\gamma$ is
sufficient to define a divisor. We also have the following converse to that
implication:

\begin{lem}\label{lem:criterion-alg-trival}
If $\gamma$ is a 1-cochain on a weak tropical complex such that $\bff_\gamma$
defines a Cartier divisor, then $\gamma$ is a cocycle.
\end{lem}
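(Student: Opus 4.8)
The plan is to translate the hypothesis that $\bff_\gamma$ defines a Cartier divisor into a statement about the simplicial chain complex $C_\bullet(\Delta;\ZZ)$ and then conclude by an orthogonality argument. First I would unwind the construction: $\bff_\gamma$ assigns to every vertex~$v$ a local equation~$\phi_v$, linear in the ordinary sense on each simplex, with $\phi_v(v)=0$ and with $\phi_v(w)=\gamma(e)$ whenever $w$ is joined to~$v$ by an edge~$e$ oriented from $v$ to~$w$. Saying that these data patch to a Cartier divisor means precisely that, for every edge~$e$ with endpoints $v$ and~$w$, the difference $\phi_v-\phi_w$ is linear on $U_v\cap U_w$, a neighborhood of the interior of~$e$.

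Next I would compute that condition. The function $\phi_v-\phi_w$ takes the value $\gamma(e)$ at both endpoints of~$e$, so subtracting the constant $\gamma(e)$ (which does not change the divisor --- this is exactly why~(\ref{eq:constraint}) is imposed) gives a function~$h_e$ that vanishes at both endpoints of~$e$ and satisfies $h_e(x)=-d\gamma(\{v,w,x\})$ at the vertex~$x$ opposite~$e$ in any facet~$f$ containing~$e$. On $U_v\cap U_w$ the only edge is~$e$ itself, so by~(\ref{eq:linear-on-simplices}) --- in which the two $\alpha$-terms now vanish --- the function $h_e$ is linear if and only if $\sum_{(f,x)\in\link_\Delta(e)} d\gamma(\{v,w,x\})=0$, each $2$-face being oriented with the endpoints of~$e$ first. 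Thus $\bff_\gamma$ defines a Cartier divisor precisely when this sum vanishes for every edge~$e$.

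Finally I would interpret these equations in homology. Let $\delta\in C_2(\Delta;\ZZ)$ be the $2$-chain whose coefficient on each facet~$f$ is $d\gamma(f)$; this is well defined because reversing the orientation of~$f$ negates both $d\gamma(f)$ and the corresponding basis chain. The vanishing of $\sum_{(f,x)\in\link_\Delta(e)} d\gamma(\{v,w,x\})$ for all edges~$e$ is exactly the statement $\partial_2\delta=0$, so $\delta$ is a $2$-cycle. But the numbers $d\gamma(f)$ are also the coordinates of the $2$-cochain~$d\gamma$, which is a coboundary; and since the coboundary map $C^1\to C^2$ is the transpose of $\partial_2$, under the standard identification $C^2(\Delta;\QQ)=C_2(\Delta;\QQ)$ the element~$\delta$ lies in $(\ker\partial_2)^\perp$. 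Hence $\delta\in\ker\partial_2\cap(\ker\partial_2)^\perp=0$, because the standard inner product is positive definite; since $C_2(\Delta;\ZZ)$ is torsion-free, $\delta=0$, i.e.\ $d\gamma=0$ and $\gamma$ is a cocycle.

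I expect the main obstacle to be the middle step: carrying out the local computation carefully enough to see the cancellation of the $\alpha$-terms, keeping track of orientations of $2$-faces, and correctly handling the $\Delta$-complex identifications (edges with a single endpoint, loops or multiple edges in a link) where~(\ref{eq:linear-on-simplices}) must be applied with care. For the statement as phrased for weak tropical complexes of arbitrary dimension, one additionally needs to check that the coefficients of the higher-codimension ridges through~$e$ impose nothing beyond $\partial_2\delta=0$; this again follows from the same manipulation together with the structure-constant identity generalizing~(\ref{eq:constraint}), but is a bit more involved.
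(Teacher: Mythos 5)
Your argument is correct and follows essentially the same route as the paper: both reduce the Cartier condition to the vanishing of $\sum_{(f,x)\in\link_\Delta(e)} d\gamma(f)$ for every edge~$e$ (the paper computes the two multiplicities separately and cancels the $\alpha$-terms via~(\ref{eq:constraint}), while you subtract the constant $\gamma(e)$ first so they vanish outright), and both then conclude via the same piece of linear algebra, since your observation that $d\gamma\in\ker\partial_2\cap(\ker\partial_2)^\perp=0$ is exactly the paper's $\gamma^T B^T B\gamma=\lVert B\gamma\rVert^2=0\Rightarrow B\gamma=0$. Your closing caveat about the local computation and about higher-dimensional complexes is apt; the paper's proof is likewise written in the surface-specific language of the vector $\bff_\gamma$ indexed by vertex--edge incidences.
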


\begin{proof}
We let $\gamma$ be a 1-cochain on~$\Delta$ such that the vector $\bff_\gamma$
defines a Cartier divisor. From $\bff_\gamma$, we have a local function~$\phi_v$
on an open neighborhood $U_v$ of each vertex~$v$. For $\bff_\gamma$ to define a
Cartier divisor means that for each edge~$e$, the multiplicity of $\phi_v$ along
$e$ agrees with the multiplicity of $\phi_w$, where $v$ and $w$ are the
endpoints of~$e$. Before computing these multiplicities, we set up some
notation. If
$t$ is a vertex in $\link_\Delta(e)$, then $t$ corresponds to an identification
of $e$ with one of the edges of a facet~$f$, and we denote the other edges
of~$f$ as $e_{vt}$ and $e_{wt}$, containing $v$ and~$w$, respectively.
Then, using Lemma~\ref{lem:local-intersection-matrix}, the multiplicity of~$\phi_v$ along~$e$
is:
\begin{equation}\label{eq:mult-v}
-\alpha(w, e) \gamma(e) + \sum_{t \in \link_\Delta(e)} \gamma(e_{vt}),
\end{equation}
where $e$ and $e_{vt}$ are both oriented away from~$v$. Similarly, with the same
orientation on~$e$, and with $e_{wt}$ oriented away from~$w$, the multiplicity
of~$\phi_w$ will be:
\begin{equation}\label{eq:mult-w}
\alpha(v, e) \gamma(e) + \sum_{t \in \link_\Delta(e)} \gamma(e_{wt}).
\end{equation}
Using the identity~(\ref{eq:constraint}) from the definition of a weak tropical
complex, the difference of (\ref{eq:mult-w}) and~(\ref{eq:mult-v}) is:
\begin{equation}\label{eq:diff-mults}
(\deg e) \gamma(e) + \sum_{t \in \link_\Delta(e)} \gamma(e_{wt}) -
\gamma(e_{vt})
 = \sum_{t \in \link_\Delta(e)} \gamma(e) + \gamma(e_{wt}) - \gamma(e_{vt}),
\end{equation}
and this sum will be zero since $\gamma$ defines a Cartier divisor.

Now, fix an orientation on every edge and facet so that we can represent the
1-cochains and 2-cochains of~$\Delta$ as vectors in $\ZZ^E$ and $\ZZ^F$, where
$E$ and $F$ are the numbers edges and facets of~$\Delta$, respectively. Then,
the simplicial coboundary map is defined by an $F \times E$ matrix~$B$
and (\ref{eq:diff-mults}) is either the entry corresponding to~$e$ of
$B^T B \gamma$
or its negative, depending on the orientation we fixed for~$e$ agrees with
the orientation used in the previous paragraph. Thus, $B^TB \gamma = 0$, but since $\gamma$ and $B$ have
real entries, $\gamma^T B^T B \gamma = 0$ implies $B \gamma = 0$. Therefore,
$\gamma$ is in the kernel of the coboundary map, so $\gamma$ is a cocycle by
definition, which is what we wanted to show.
\end{proof}

Up to torsion, the algebraically trivial divisors on a weak tropical surface
from the previous section can also be characterized by their intersection
numbers~(cf.\ \cite[19.3.1(ii)]{fulton-intersection}). In other words, the
intersection pairing on $\NS(\Delta) \otimes_{\ZZ} \QQ$ is non-degenerate, which
is one part of Theorem~\ref{thm:hodge}.

\begin{prop}\label{prop:numerical-algebraic}
Let $D$ be a divisor on a weak tropical surface~$\Delta$. Then $\deg D \cdot D'
= 0$ for all divisors $D'$ if and only if $mD$ is algebraically trivial for
some $m \geq 0$.
\end{prop}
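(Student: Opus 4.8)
The plan is to translate the statement into linear algebra using the matrix $M_\Delta$ from Proposition~\ref{prop:matrix-intersections}, together with the description of algebraically trivial ridge divisors via Lemmas~\ref{lem:cocycle} and~\ref{lem:criterion-alg-trival}. One direction is easy: if $mD$ is algebraically trivial, then by Proposition~\ref{prop:intersect-alg} the intersection number $\deg(mD) \cdot D'$ vanishes for every curve $D'$, hence $\deg D \cdot D' = 0$ for all divisors $D'$ (since divisors and curves coincide on a weak tropical surface by Proposition~\ref{prop:curve-divisor}). So the substance is the converse: if $D$ is numerically trivial, then some multiple $mD$ is algebraically trivial.

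For the converse, I would first reduce to the case where $D$ is a ridge divisor. By Corollary~\ref{cor:multiple-ridge}, some multiple $m_0 D$ is algebraically equivalent to a ridge divisor~$D_0$; by the easy direction just proved, $D_0$ is still numerically trivial, and it suffices to show some further multiple of $D_0$ is algebraically trivial. So assume $D$ is a ridge divisor, encoded by a rational vector $\bff \in \QQ^N$ of local defining equations, so that $\mathbf d = M_\Delta \bff$ records its coefficients (with the redundancy $\mathbf d_{v,e} = \mathbf d_{w,e}$). Numerical triviality says $\bff^T M_\Delta \bff' = 0$ for every vector $\bff'$ encoding a divisor, by Proposition~\ref{prop:matrix-intersections}. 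The key point is to identify which vectors $\bff'$ arise this way: they are exactly the rational vectors for which $M_\Delta \bff'$ satisfies the compatibility equalities $\mathbf d_{v,e} = \mathbf d_{w,e}$, as noted in the discussion preceding Proposition~\ref{prop:matrix-intersections}. I want to conclude that $M_\Delta \bff$ lies in a subspace small enough that $\bff$ differs from a cocycle vector $\bff_\gamma$ by something in the kernel of $M_\Delta$.

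Concretely, the strategy is: the image of $M_\Delta$ restricted to divisor-encoding vectors is a subspace $W$ of $\QQ^N$; numerical triviality says $\mathbf d = M_\Delta \bff$ is orthogonal to the span of all such $M_\Delta \bff'$. Since $M_\Delta$ is symmetric, $\bff$ is orthogonal (with respect to the form $M_\Delta$) to the space of all divisor-encoding vectors, which in particular contains all $\bff_\gamma$ for $\gamma$ ranging over $1$-cochains that happen to define Cartier divisors — equivalently, by Lemma~\ref{lem:criterion-alg-trival}, over $1$-cocycles. The class of $D$ in $\NS(\Delta) \otimes \QQ$ is, by Lemma~\ref{lem:cocycle} and the exponential sequence, the obstruction to $\bff$ being congruent to such a $\bff_\gamma$ modulo $\ker M_\Delta$ plus linear-equivalence vectors; I would make this precise by showing that the pairing $\bff^T M_\Delta \bff_\gamma$, as $\gamma$ runs over cocycles, detects exactly the image of the Chern class of~$D$ paired against $H^1(\Delta,\RR)$ under a perfect pairing coming from the non-degenerate form on the relevant cohomology. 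The main obstacle I anticipate is precisely this last step: showing that the bilinear pairing between numerical classes and the algebraically-trivial part is perfect, so that numerical triviality of $D$ forces its Chern class to vanish rationally. I expect this to follow from a dimension count — comparing $\dim_\QQ(\NS(\Delta)\otimes\QQ)$ with the rank of $M_\Delta$ modulo its "trivial" directions (kernel, i.e. linear functions, and cocycle directions, i.e. algebraically trivial divisors) — combined with the observation that on a weak tropical surface the compatibility constraints cut $M_\Delta$ down to a form whose radical is exactly spanned by these trivial directions. Once that perfect-pairing statement is in hand, $\deg D \cdot D' = 0$ for all $D'$ forces $[D] = 0$ in $\NS(\Delta)\otimes\QQ$, i.e. $mD$ is algebraically trivial for some $m \geq 1$, which is the claim.
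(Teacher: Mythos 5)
Your easy direction and the reduction to ridge divisors via Corollary~\ref{cor:multiple-ridge} match the paper, but the converse has a genuine gap at exactly the step you flag as the ``main obstacle,'' and the particular pairing you propose to use there cannot work. You want to detect the vanishing of the rational Chern class of $D$ by evaluating $\bff^T M_\Delta \bff_\gamma$ as $\gamma$ runs over $1$-cocycles. But for a cocycle $\gamma$, the divisor defined by $\bff_\gamma$ is algebraically trivial (Lemma~\ref{lem:cocycle}), so by Proposition~\ref{prop:intersect-alg} together with the symmetry of the intersection product this pairing is identically zero for \emph{every} divisor $D$, numerically trivial or not; it carries no information about the class of~$D$. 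Moreover, the ``perfect pairing / dimension count'' you hope will rescue the argument amounts to the non-degeneracy of the intersection form on $\NS(\Delta)\otimes_\ZZ\QQ$, which is precisely what this proposition asserts, so you would be assuming the conclusion.

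The idea you are missing is elementary linear algebra applied to the compatibility conditions you already identified. Let $\mathbf c_e \in \QQ^N$ be the vector with entry $1$ at $(v,e)$, entry $-1$ at $(w,e)$, and zeros elsewhere, so that the condition for $\bff'$ to encode a ($\QQ$-)Cartier divisor is exactly $\mathbf c_e^T M_\Delta \bff' = 0$ for all edges~$e$. Writing $C$ for the span of the $\mathbf c_e$, the divisor-encoding vectors are therefore $C^\perp$ with respect to the form $M_\Delta$, and numerical triviality of $D$ says its encoding vector lies in $(C^\perp)^\perp = C + \ker M_\Delta$. Vectors in $\ker M_\Delta$ define the zero divisor, so $D$ may be taken to be defined by a vector in $C$ --- and $C$ is, after clearing denominators, exactly the set of vectors $\bff_\gamma$ for $\gamma$ a simplicial $1$-cochain. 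Lemma~\ref{lem:criterion-alg-trival} then upgrades $\gamma$ from a cochain to a cocycle because $\bff_\gamma$ defines a Cartier divisor, and Lemma~\ref{lem:cocycle} gives algebraic triviality of $mD$. This identification of $C$ with the cochain vectors, combined with the double-orthogonal-complement identity, is the content your proposal lacks.
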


\begin{proof}
By Proposition~\ref{prop:intersect-alg}, if $mD$ is algebraically trivial, then
$\deg mD \cdot D'  = 0$, and since the intersection product is linear, $D \cdot
D'$ is also degree $0$ for all divisors~$D'$.

On the other hand, suppose that $\deg D \cdot D' = 0$ for all divisors $D'$. By
replacing $D$ and $D'$ with multiples, we can assume that they are Cartier
divisors. By scaling further, we can assume that they are ridge divisors by
Corollary~\ref{cor:multiple-ridge}.
As discussed above, for $D'$ Cartier, the local defining equations of~$D'$ are
encoded in a vector~$\bff$ where $M_\Delta \bff$ satisfies certain equalities.
If $v$ and~$w$ are the endpoints of an edge~$e$, we
let $\mathbf c_e$ be the vector defined by $(\mathbf c_e)_{v,e} = 1$,
$(\mathbf c_e)_{w,e} = -1$, and zeros elsewhere. Then, the compatibility
condition for $\bff$ defining the same coefficient on~$e$ at either endpoint is
characterized by $\mathbf c_e^T M_\Delta \bff = 0$.

Let $C$ be the vector subspace of~$\QQ^N$ generated by the $\mathbf c_e$. Thus,
if we consider $M_{\Delta}$ as defining a bilinear form on $\QQ^N$, then the
vectors $\bff$ defining the local equations of $\QQ$-Cartier divisors are the
orthogonal complement $C^{\perp}$, taken with respect to $M_{\Delta}$.
Therefore, the numerically trivial divisors are those in $(C^\perp)^{\perp}$,
which is equal to $C + \ker M_{\Delta}$ by linear algebra. Since the divisors
associated to a function in $\ker M_{\Delta}$ is zero, we can assume that $D$ is
defined by a vector $\bff \in C$.

Let $m$ be a positive integer such that the entries of $m\bff$ are integers. If
set $\gamma(e) = m\bff_{v,e}$, where $v$ is the initial endpoint of an oriented
edge~$e$, then we get a simplicial 1-cochain by the definition of~$C$. Then, $m
\bff$ agrees with $\bff_{\gamma}$, and Lemma~\ref{lem:criterion-alg-trival}
shows that $\gamma$ is a 1-cocycle, so $mD$ is algebraically trivial, which is
what we wanted to show.
\end{proof}

We now turn to the proof of the tropical Hodge index theorem. The crux is the
following lemma.

\begin{lem}\label{lem:master}
Let $\Delta$ be a tropical surface which is locally connected through
codimension~$1$. If the intersection pairing on $\Pic_{\ridge}(\Delta)
\otimes_{\ZZ} \QQ$ has kernel of dimension~$k$ and has $m$ positive eigenvalues,
then $m + k \leq 1$.
\end{lem}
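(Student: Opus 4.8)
The plan is to reduce the statement to a property of the global matrix $M_\Delta$ and then to play the local condition (each $M_v$ has exactly one positive eigenvalue, via Lemma~\ref{lem:local-matrix}) against the connectivity hypothesis. By Proposition~\ref{prop:matrix-intersections}, the intersection pairing on ridge divisors is computed by $\bff^T M_\Delta \bff'$ for vectors $\bff, \bff'$ encoding local defining equations; and by the discussion preceding Proposition~\ref{prop:matrix-intersections}, the space of such vectors is the subspace $V = C^\perp$ (orthogonal complement taken with respect to $M_\Delta$) of $\QQ^N$, where $C$ is spanned by the $\mathbf c_e$. The pairing on $\Pic_{\ridge}(\Delta)\otimes\QQ$ is then the form $M_\Delta$ restricted to $V$, modulo its radical. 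So the claim $m + k \le 1$ translates into: the restriction of $M_\Delta$ to $V$ has at most one positive eigenvalue, i.e. $M_\Delta|_V$ is negative semidefinite on a subspace of codimension at most~$1$.

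First I would record what the hypotheses give about $M_\Delta$ itself. Since $M_\Delta$ is block-diagonal with blocks $M_v$, and each $M_v$ has exactly one positive eigenvalue, $M_\Delta$ has exactly $|\Delta_0|$ positive eigenvalues — one per vertex. The key point is to produce, for each vertex $v$, an explicit vector on which $M_v$ is positive, coming from the Perron–Frobenius eigenvector: because $\Delta$ is locally connected through codimension~$1$, $M_v + cI$ is irreducible (as in the proof of Proposition~\ref{prop:no-regular-local}), so the top eigenvalue of $M_v$ is simple with a strictly positive eigenvector $\mathbf w_v$. Let $W \subset \QQ^N$ be the span of the $\mathbf w_v$ over all vertices $v$; this is an $|\Delta_0|$-dimensional positive-definite subspace for $M_\Delta$ on which the form is diagonal, and every positive-definite-or-semidefinite subspace for $M_\Delta$ has dimension at most $|\Delta_0|$. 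Now I want to show $V \cap W$ has dimension at most $1$ — equivalently, the set of $\mathbf w_v$ that can be "assembled" into a vector orthogonal to $C$ is at most one-dimensional worth of global combinations. This is where the global topology enters: a linear combination $\sum_v \lambda_v \mathbf w_v$ lies in $V = C^\perp$ iff $\mathbf c_e^T M_\Delta(\sum \lambda_v \mathbf w_v) = 0$ for every edge $e$; writing this out, $\mathbf c_e^T M_\Delta \mathbf w_{v} = \mathbf c_e^T (\lambda^{-1}_{\text{top}}$-scaled entries$)$ picks out, for the two endpoints $v, w$ of $e$, the $e$-entries of $(\text{top eigenvalue}_v)\mathbf w_v$ and $(\text{top eigenvalue}_w)\mathbf w_w$, and their difference must vanish. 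Since all these entries are strictly positive, this forces a ratio constraint linking $\lambda_v$ and $\lambda_w$ across every edge; as $\Delta$ is connected, the whole family of $\lambda_v$ is determined up to a single global scalar, so $\dim(V\cap W) \le 1$.

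Finally I would combine the two facts. If $U \subseteq V$ is a subspace on which $M_\Delta|_V$ is positive semidefinite of dimension $m + k$ (take $U$ to be the sum of a maximal positive-definite subspace and the radical of $M_\Delta|_V$), then $U$ is a positive-semidefinite subspace for $M_\Delta$, hence — since $M_\Delta$ restricted to any such subspace can be simultaneously block-diagonalized against its negative part — $U$ admits a complement inside some maximal positive-semidefinite subspace; more directly, $U \oplus W$ would be positive semidefinite for $M_\Delta$ unless $U$ and $W$ overlap, but $\dim(U\oplus W)$ cannot exceed $|\Delta_0| = \dim W$, forcing $\dim(U \cap W) \ge \dim U = m + k$, whence $m + k \le \dim(V \cap W) \le 1$. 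The main obstacle I anticipate is the middle step: making precise that membership in $V = C^\perp$, combined with strict positivity of the Perron eigenvectors' entries, rigidly propagates a single scalar across the connected $1$-skeleton — in particular handling $\Delta$-complex subtleties (an edge with its two endpoints identified, loops in links) so that the "ratio across each edge" argument is valid entry-by-entry. I expect Lemma~\ref{lem:link} and the explicit entries of $M_v$ from the definition to be exactly what is needed to push this through.
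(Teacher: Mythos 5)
Your reduction to the block matrix $M_\Delta$, the count of its positive eigenvalues (one per vertex), and the Perron--Frobenius construction of the strictly positive eigenvectors $\mathbf w_v$ are all sound, and the ratio-propagation argument showing $\dim(V\cap W)\le 1$ is plausible. The gap is in the final assembly, which does not follow from these facts. First, the sum of a positive semidefinite subspace $U$ and a positive definite subspace $W$ need not be positive semidefinite: for the form $\operatorname{diag}(1,-1)$, take $U$ spanned by $(1,1)$ and $W$ spanned by $(1,0)$; then $U+W$ is the whole plane, which is indefinite. Second, even if $U\oplus W$ were positive semidefinite, the bound $\dim(U\oplus W)\le|\Delta_0|$ is false: a positive \emph{semidefinite} subspace for $M_\Delta$ can have dimension up to $|\Delta_0|+\dim\ker M_\Delta$, and $\ker M_\Delta$ is large (it is the direct sum of the kernels of the $M_v$, i.e.\ the local linear functions at each vertex). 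So nothing forces $U\cap W$ to be large, and the chain $m+k\le\dim(U\cap W)\le\dim(V\cap W)\le 1$ breaks at its first link; indeed the isotropic directions of $U$ cannot lie in the positive definite space $W$ at all. A further, separate issue is that the radical of $M_\Delta|_V$ has dimension much larger than $k$ (it contains $\ker M_\Delta\cap V$ and the encodings of global PL functions), so the subspace $U$ you construct does not have dimension $m+k$ as claimed.

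The paper's proof converts $k$ into countable positive eigenvalues by a different device: it takes vectors $\mathbf g_1,\dots,\mathbf g_k$ encoding divisors that span the kernel of the pairing together with the $n$ vectors $\bff_i$ encoding the ``indicator'' PL functions of the vertices, shows (using Corollary~\ref{cor:no-regular}, hence both the connectivity and the eigenvalue hypotheses) that the only relation among the $M_\Delta\bff_i$ and $M_\Delta\mathbf g_j$ is $\sum_i\bff_i=0$, and then pairs each of the resulting $n+k-1$ independent vectors with a dual vector to produce $n+k-1$ hyperbolic planes, orthogonal to an $m$-dimensional positive definite block coming from the $H_i$. The restricted form then has $n+k-1+m$ positive eigenvalues, which cannot exceed the $n$ positive eigenvalues of $M_\Delta$, giving $m+k\le 1$. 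Some such mechanism for trading isotropic directions for positive eigenvalues is exactly what your argument is missing.
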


\begin{proof}
Let $D_1, \ldots, D_k$ be Cartier ridge divisors whose classes span the kernel
of the intersection pairing on $\Pic_{\ridge}(\Delta) \otimes_{\ZZ} \QQ$. Let
$\mathbf g_i \in \QQ^N$ be the vector recording the local defining equations
of~$D_i$. We
number the vertices $v_1, \ldots, v_n$, and let $\phi_1, \ldots, \phi_n$ be the
PL functions which are linear in the ordinary sense on each facet and such
that $\phi_{i}(v_j) = \delta_{ij}$, where $\delta_{ij}$ is the Kronecker delta
function. Then, let
$\bff_i \in \QQ^N$ be the vector which encodes $\phi_i$. Since our encoding
records the slopes of the PL function and $\sum_{i=1}^n \phi_i$ is identically
$1$, we have a relation $\sum_{i = 1}^n \bff_i
= 0$ among our vectors.
Now let $H_1, \ldots, H_m$ be Cartier ridge divisors whose intersection form is
positive definite, with $\mathbf h_i \in \QQ^N$ the vector encoding the defining
equations of~$H_i$.

Let $L$ be the vector subspace of $\QQ^N$ spanned by the $\bff_i$ and $\mathbf
g_j$. We claim that $M_\Delta L \subset \QQ^N$ has dimension $n + k - 1$.
Suppose we have a relation among the generators of this vector space, say:
\begin{equation*} M_\Delta (c_1 \bff_1 + \ldots + c_n \bff_n + d_1 \mathbf g_1 +
\cdot + d_k \mathbf g_k) = 0 \end{equation*} Since multiplying by~$M_\Delta$
computes the divisor associated to the local functions, this means that $d_1 D_1
+ \cdots + d_k D_k = \div(\phi)$ where $\phi = -c_1 \phi_1 - \cdots - c_n
\phi_n$. However, we've assumed that the $D_j$ are linearly independent in
$\Pic_{\ridge}(\Delta)$, so that means that $d_1 = \cdots = d_k = 0$. Thus,
$\phi$ is a linear function, so
by Corollary~\ref{cor:no-regular} and 
since $\Delta$ is locally connected through codimension~$1$,
$\phi$ is constant, meaning that $c_1 = \cdots = c_n$.
Therefore, the only relation among the $M \bff_i$ and $M \mathbf g_j$ comes from
the relation $\sum_{i=1}^n \bff_i = 0$ already noted, and so their span has
dimension $n + k - 1$ as desired.

We can therefore find dual vectors $\bff_{1}^*, \ldots \bff_{n-1}^*$ and
$\mathbf g_1^*, \ldots, \mathbf g_{k}^*$ such that 
\begin{equation*}
(\mathbf f_i^*)^T M_\Delta \mathbf g_j = 0 \qquad
(\mathbf f_i^*)^T M_\Delta \bff_j = \delta_{ij} \qquad
(\mathbf g_i^*)^T M_\Delta \mathbf g_j = \delta_{ij},
\end{equation*}
where $\delta_{ij}$ is again the Kronecker delta function. If we restrict the
bilinear form defined by $M_\Delta$ to the vectors we've defined in the
following order:
\begin{equation*}
\bff_{1}, \ldots, \bff_{n-1}, \mathbf g_1, \ldots, \mathbf g_k,
\bff_1^*, \ldots, \bff_{n-1}^*, \mathbf g_1^*, \ldots, \mathbf g_k^*,
\mathbf h_1, \ldots, \mathbf h_m,
\end{equation*}
then the resulting pairing has the matrix form:
\begin{equation}\label{eq:restricted-form}
\begin{pmatrix}
0 & I & 0 \\
I & 0 & 0 \\
0 & 0 & M_H
\end{pmatrix},
\end{equation}
where the identity blocks have size $n + k -1$ and $M_H$ is the positive
definite intersection matrix for the $H_i$. Thus, the
matrix~(\ref{eq:restricted-form}) has $n + k
- 1 + m$ positive eigenvalues.

On the other hand, $M_\Delta$ is a block diagonal matrix whose blocks $M_v$ each
have
exactly $1$ positive eigenvalue, so $M_\Delta$ has exactly $n$ positive
eigenvalues. Since restricting a bilinear form to a subspace can only decrease
the number of positive eigenvalues, we get $k + m -1 \leq 0$, which is what we
wanted to show.
\end{proof}

\begin{proof}[Proof of Theorem~\ref{thm:hodge}]
We've already shown that the intersection pairing on $\NS(\Delta) \otimes_\ZZ
\QQ$ is non-degenerate in Proposition~\ref{prop:intersect-alg}. By
Corollary~\ref{cor:multiple-ridge}, the
map from $\Pic_{\ridge(\Delta)} \otimes_{\ZZ} \QQ$ to $\NS(\Delta) \otimes_{\ZZ}
\QQ$ is surjective. The intersection pairing on the former has at most one
positive eigenvalue and so the same is true for the intersection pairing on
$\NS(\Delta) \otimes_{\ZZ} \QQ$, which completes the proof.
\end{proof}

Unlike the case of smooth proper algebraic surfaces, but like complex analytic
surfaces, tropical surfaces do not always have a divisor with positive
self-intersection, so the intersection pairing can be negative definite. The
tropical complex in the following example has no algebraically non-trivial
divisors, so any intersection product of divisors is zero by
Proposition~\ref{prop:intersect-alg}.

\begin{figure}
\includegraphics{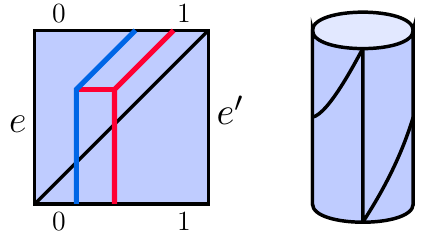}
\caption{Our tropical complex will be formed by gluing the edges $e$ and $e'$ of
the square on the left to form the cylinder pictured on the right. The structure
constants on $e = e'$ and the diagonal edge are all~$1$ and those of the top and
bottom edges are as indicated by the numbers at either end of those edges. One
can check that these define a tropical
complex. In Example~\ref{ex:hopf}, we show how, for the resulting tropical
complex~$\Delta$, the inclusion of $H^0(\Delta, \cD)$ into $H^1(\Delta, \RR)$ is
not a lattice and the N\'eron-Severi group is~$\RR$. The divisor corresponding
to a real number~$\lambda$ is the difference between the red divisor and the
blue divisor, where $\lambda$ is the length of the horizontal
segment.}\label{fig:hopf}
\end{figure}

\begin{ex}\label{ex:hopf}
We consider the tropical surface~$\Delta$ shown in Figure~\ref{fig:hopf}. Note
that while $\Delta$ is topologically a product of a circle with an interval,
we'll see that, as a tropical complex, it does not behave like a product. See
\cite[Sec.~6]{lazar} for a construction of tropical surfaces as products of
curves.

Since $\Delta$ is homotopy equivalent to a circle,
$H^2(\Delta, \ZZ)$ is trivial, and thus Proposition~\ref{prop:ridge-obstruction}
tells us that any divisor on $\Delta$ is algebraically equivalent to a ridge
divisor. By Lemma~\ref{lem:local-intersection-matrix}, ridge divisors are
characterized by having their coefficient vectors in the image of~$M_v$ at each
vertex and one can check that the Cartier ridge divisors on~$\Delta$ are
generated by the top and bottom edges of the cylinder. Moreover, these are
linearly equivalent to each other by the PL function corresponding to the
vertical coordinate in Figure~\ref{fig:hopf}. Thus, $\Pic_{\ridge}(\Delta)$ is
isomorphic to~$\ZZ$. Moreover, one can take a simplicial $1$-cochain
corresponding to a generator of $H^1(\Delta, \ZZ) = \ZZ$ and get a generator for
$\Pic_{\ridge}(\Delta)$, so $\NS(\Delta)$ is trivial. In the notation of
Lemma~\ref{lem:master}, $k=1$ and $m=0$ for $\Delta$.

Thus, using the exponential sequence for ridge divisors~(\ref{eq:exp-ridge}), we
see that $H^0(\Delta, \cD)$ and $H^1(\Delta, \cD)$ are trivial, so the
exponential sequence~(\ref{eq:exp}) gives us an isomorphism $\Pic(\Delta) =
H^1(\Delta, \RR) = \RR$. The divisor corresponding to a real number~$\lambda$ is
depicted in Figure~\ref{fig:hopf} as the difference between the red and blue
segments, where $\lambda$ is the length of the horizontal red segment. Note that
only the length of the horizontal segment, but not its position, affect the
linear equivalence class.
\end{ex}

In the case of a complex projective variety~$X$, the map from $H^1(X, \ZZ)$ into
the complex vector space $H^1(X, \mathcal O_X)$ is always the inclusion of a
lattice, by Hodge theory. The analogous morphism in the tropical exponential
sequence~(\ref{eq:exp}) is $H^0(\Delta, \cD) \rightarrow H^1(\Delta, \RR)$, but
Example~\ref{ex:hopf} shows that the image of this morphism need not be a
lattice. In fact, this failure is related to the non-existence of a divisor with
positive self-intersection on this surface. In particular, we have the
following:

\begin{thm}\label{thm:differentials}
Let $\Delta$ be a tropical surface which is locally connected through
codimension~$1$. Then the $\RR$-span of the image of $H^0(\Delta, \cD)$ in
$H^1(\Delta, \RR)$ has codimension at most~$1$. Moreover, if there exists a
divisor on~$\Delta$ with positive self-intersection, then the $\RR$-span is all
of $H^1(\Delta, \RR)$.
\end{thm}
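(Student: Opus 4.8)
The plan is to analyze the exponential sequence~(\ref{eq:exp-ridge}) for ridge divisors together with Lemma~\ref{lem:master}, which bounds $m+k$ by~$1$ where $k$ is the dimension of the kernel of the intersection pairing on $\Pic_{\ridge}(\Delta)\otimes\QQ$ and $m$ is its number of positive eigenvalues. First I would extract from~(\ref{eq:exp-ridge}) the exact piece
\begin{equation*}
H^0(\Delta,\cD) \xrightarrow{\ \partial\ } H^1(\Delta,\ZZ) \rightarrow \Pic_{\ridge}(\Delta) \rightarrow H^1(\Delta,\cD),
\end{equation*}
so that the cokernel of $\partial$ injects into $\Pic_{\ridge}(\Delta)$; tensoring with~$\QQ$, the image of $H^1(\Delta,\ZZ)\otimes\QQ$ in $\Pic_{\ridge}(\Delta)\otimes\QQ$ has dimension $b - \dim_\RR(\text{$\RR$-span of }\partial)$, where $b = \dim_\RR H^1(\Delta,\RR)$ and I use that $H^1(\Delta,\ZZ)\otimes\RR = H^1(\Delta,\RR)$. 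Call this image $V \subseteq \Pic_{\ridge}(\Delta)\otimes\QQ$; the codimension in question is exactly $\dim_\QQ V$. Next I would observe that $V$ lies in the kernel of the intersection pairing on $\Pic_{\ridge}(\Delta)\otimes\QQ$: indeed by Lemma~\ref{lem:cocycle} a class in the image of $H^1(\Delta,\ZZ)$ is represented by a $1$-cocycle~$\gamma$, and the computation in the proof of Proposition~\ref{prop:intersect-alg} (or directly, that the defining equations of $\bff_\gamma$ differ by locally constant functions between charts, so the contributions at the two ends of each segment of any curve cancel) shows such a divisor is algebraically trivial, hence has zero intersection with every divisor, hence maps to zero in $\NS(\Delta)$ and lies in the kernel of the pairing on $\Pic_{\ridge}$. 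Therefore $\dim_\QQ V \le k \le m + k \le 1$ by Lemma~\ref{lem:master}, proving the first statement.

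For the second statement, suppose $\Delta$ has a divisor $D$ with $\deg D^2 > 0$. Then the intersection pairing on $\Pic_{\ridge}(\Delta)\otimes\QQ$ has at least one positive eigenvalue: pass to a multiple so that $D$ is Cartier and then, by Corollary~\ref{cor:multiple-ridge}, so that $mD$ is algebraically equivalent to a ridge divisor $D'$; since algebraic equivalence preserves intersection numbers (Proposition~\ref{prop:intersect-alg}), $\deg (D')^2 = m^2 \deg D^2 > 0$, so the class of $D'$ in $\Pic_{\ridge}(\Delta)\otimes\QQ$ is a vector of positive self-pairing, giving $m \ge 1$ in the notation of Lemma~\ref{lem:master}. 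But then $k \le 1 - m \le 0$, so the kernel is trivial, and since $V$ is contained in the kernel by the previous paragraph, $V = 0$. Unwinding the identification $\dim_\QQ V = b - \dim_\RR(\text{$\RR$-span of }\partial)$ gives that the $\RR$-span of the image of $H^0(\Delta,\cD)$ in $H^1(\Delta,\RR)$ is all of $H^1(\Delta,\RR)$.

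I expect the main obstacle to be the bookkeeping in translating the codimension statement in $H^1(\Delta,\RR)$ into a rank statement about the map $H^1(\Delta,\ZZ)\otimes\QQ \to \Pic_{\ridge}(\Delta)\otimes\QQ$, and in particular verifying cleanly that $V$ really coincides with the image of integral $1$-cocycle classes and that it sits in the kernel of the ridge intersection pairing (as opposed to just mapping to zero in $\NS(\Delta)$, which a priori only controls the pairing against $\QQ$-Cartier divisors); this should follow from Proposition~\ref{prop:numerical-algebraic} and Proposition~\ref{prop:intersect-alg}, but it requires being careful about which pairing is nondegenerate on which space. The eigenvalue arithmetic via Lemma~\ref{lem:master} is then immediate.
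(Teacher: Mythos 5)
Your proposal is correct and follows essentially the same route as the paper: both arguments identify the codimension with the rank of the image of $H^1(\Delta,\ZZ)$ in $\Pic_{\ridge}(\Delta)\otimes_\ZZ\QQ$ via the exact sequence~(\ref{eq:exp-ridge}), observe that this image is algebraically and hence numerically trivial by Proposition~\ref{prop:intersect-alg}, and conclude with the bound $m+k\leq 1$ of Lemma~\ref{lem:master}. The bookkeeping you flag as a potential obstacle goes through exactly as you describe, and your justification that a positive self-intersection divisor forces $m\geq 1$ (passing to a ridge divisor via Corollary~\ref{cor:multiple-ridge}) is the step the paper leaves implicit.
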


\begin{proof}
Suppose that the $\RR$-span of the image of $H^0(\Delta, \cD)$ in $H^1(\Delta,
\RR)$ has codimension~$k$. Then, we can find cohomology classes $\sigma_1,
\ldots, \sigma_k$ in $H^1(\Delta, \RR)$ which are linearly independent modulo
the $\RR$-span of $H^0(\Delta, \cD)$. Moreover, by perturbing and then scaling,
we can assume that the $\sigma_i$ are integral classes, i.e.\ in $H^1(\Delta,
\ZZ)$. We look at the images of the $\sigma_i$ in $\Pic_{\ridge}(\Delta)$ by the
map from the exponential sequence~(\ref{eq:exp-ridge}), which generate a
rank~$k$ subgroup by our independence assumption. Thus, we have a
$k$-dimensional subspace of $\Pic_{\ridge}(\Delta) \otimes_{\ZZ} \QQ$ which is algebraically
trivial and thus numerically trivial by Proposition~\ref{prop:intersect-alg}.
Therefore, Lemma~\ref{lem:master} shows that $k \leq 1$, which is the first
statement. Moreover, if $\Delta$ has a divisor with positive self-intersection,
then $m = 1$ in Lemma~\ref{lem:master}, so $k = 0$, meaning that the $\RR$-span
of $H^0(\Delta, \cD)$ is $H^1(\Delta, \RR)$, as desired.
\end{proof}

As a corollary, we have Theorem~\ref{thm:jacobian} from the introduction.

\begin{proof}[Proof of Theorem~\ref{thm:jacobian}]
By Theorem~\ref{thm:differentials}, the $\RR$-span of $H^0(\Delta, \cD)$ is all
of $H^1(\Delta, \RR)$, so it is a lattice in $H^1(\Delta, \RR)$. Therefore,
$H^1(\Delta, \RR) / H^0(\Delta, \cD)$ is isomorphic to compact real torus, which
is isomorphic to the group of algebraically trivial divisors modulo linear
equivalence by the exponential sequence~(\ref{eq:exp}).
\end{proof}

\section{Noether's formula}\label{sec:noether}

In this section, we look at an analogue of Noether's formula for a weak tropical
surfaces~$\Delta$. In particular, we define a rational formal sum of the
vertices of~$\Delta$, which we call the second Todd class of~$\Delta$. The total
degree of the second Todd class is equal to the Euler characteristic of the
underlying $\Delta$-complex.

\begin{defn}\label{def:todd}
The \defi{second Todd class} of a weak tropical surface~$\Delta$ is the
following formal combination of its vertices:
\begin{equation}\label{eq:todd}
\td_2(\Delta) = \frac{1}{12}\sum_{v \in \Delta_0} \Bigg(12 + 5 F_v - 6 E_v -
\sum_{e \in \link_\Delta(v)_0} \alpha(v, e) \Bigg) [v],
\end{equation}
where $F_v$ and $E_v$ are the number of edges and vertices in the link of~$v$,
respectively, and the second summation
is over the vertices of that link.
\end{defn}
While (\ref{eq:todd}) may seem arbitrary, it is the only expression of this form
which satisfies Proposition~\ref{prop:noether} and is determined by a tropical
variety, independent of its subdivision, in the sense established by
Propositions~\ref{prop:invariance} and~\ref{prop:todd-compatibility}.

\begin{proof}[Proof of Proposition~\ref{prop:noether}]
Immediately from the definition, the total degree of the $\td_2(\Delta)$ is:
\begin{equation*}
\frac{1}{12}\sum_{v \in \Delta_{0}} \Bigg(12 + 5 F_v - 6 E_v
- \sum_{e\in\link(v)_0} \alpha(v, e) \Bigg).
\end{equation*}
Since $E_v$ will be count each edge once for each of its two endpoints, and
similarly $F_v$ will count any facet three times, we can rearrange this to:
\begin{equation}\label{eq:todd-degree}
V + \frac{5}{4} F - E -
\frac{1}{12}\sum_{e \in \Delta_1} \big( \alpha(v, e) + \alpha(w, e) \big),
\end{equation}
where $V$, $E$, and~$F$ are the numbers of vertices, edges, and facets
of~$\Delta$ respectively, and $v$ and $w$ are the endpoints of~$e$. For any
edge~$e$, we have the assumption that $\alpha(v, e) + \alpha(w,e) = \degree(e)$
and so the last term of~(\ref{eq:todd-degree}) will be triple counting the faces, once for each edge they
contain. Using this, we get that the degree of the Todd class is $V + F - E =
\chi(\Delta)$, which completes the proof.
\end{proof}

Variations of our definition of the second
Todd class and of Proposition~\ref{prop:noether} have appeared before in the
literature. The first is Kontsevich and Soibelman's
$\ZZ$-affine Gauss-Bonnet theorem, which computes
the Euler characteristic as a sum of local invariants on an oriented manifold
with an $\ZZ$-affine structure away from finitely many
points~\cite[Thm.~2]{kontsevich-soibelman}. If a tropical complex~$\Delta$
happens to be homeomorphic to a manifold, then the sheaf~$\cA$ defined in
Section~\ref{sec:alg} gives $\Delta$ the
structure of a $\ZZ$-affine manifold away from its vertices, and our Todd class
agrees with the local invariant of Kontsevich-Soibelman.

\begin{prop}
If $\Delta$ is homeomorphic to an oriented manifold, then the coefficient of
$\td_2(\Delta)$ at a vertex~$v$ equals the invariant $i_{\mathrm{loc}}(v)$,
defined defined by Kontsevich and Soibelman
in~\cite[Sec.~6.5]{kontsevich-soibelman}.
\end{prop}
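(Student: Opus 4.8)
The plan is to match two explicit local formulas: our coefficient of $\td_2(\Delta)$ at a vertex $v$, namely $\frac{1}{12}\big(12 + 5F_v - 6E_v - \sum_{e}\alpha(v,e)\big)$, against Kontsevich and Soibelman's local invariant $i_{\mathrm{loc}}(v)$ from \cite[Sec.~6.5]{kontsevich-soibelman}. The key observation enabling this comparison is that when $\Delta$ is homeomorphic to an oriented manifold, the link $\link_\Delta(v)$ is a circle (a $1$-sphere), so it is just a cycle of some length $n$; hence $E_v = F_v = n$, and the link, as a $\ZZ$-affine structure on $S^1$ minus the origin, is exactly the datum to which the Kontsevich--Soibelman local invariant is attached. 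I would first recall their definition: the $\ZZ$-affine structure near $v$ is encoded by the monodromy of the affine structure around the puncture, or equivalently by the gluing data of the $n$ triangles around $v$, and $i_{\mathrm{loc}}(v)$ is a rational number depending only on that combinatorial/monodromy data.

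First I would make the link structure fully explicit. Going around the cycle of $n$ triangles $f_1,\dots,f_n$ incident to $v$, each edge $e_i$ from $v$ (for $i = 1,\dots,n$) carries the integer $\alpha(v,e_i)$, and the compatibility relation \eqref{eq:constraint} together with the local intersection matrix (from the definition of a weak tropical complex) pins down how consecutive triangles glue. Concretely, choosing the identification of each $f_i$ with a unimodular simplex, the passage from $f_i$ to $f_{i+1}$ across the common edge $e_{i+1}$ is an element of $GL_2(\ZZ)$ fixing that edge; composing these around the cycle gives the monodromy $\mu_v \in GL_2(\ZZ)$, and one computes that its trace or its lower-left entry is controlled by $\sum_i \alpha(v,e_i)$ — indeed the entry of the local intersection matrix $M_v$ at the diagonal position indexed by $e_i$ is $-\alpha(w_i, e_i) + 2(\text{loops})$, which encodes precisely the ``bend'' contributed by that edge. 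I would then plug this monodromy (or equivalently the list of bends) into Kontsevich--Soibelman's formula for $i_{\mathrm{loc}}$ and simplify.

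The heart of the argument, then, is a bookkeeping identity: show that Kontsevich--Soibelman's $i_{\mathrm{loc}}(v)$, written in terms of the $n$ triangles and the integers $\alpha(v,e_i)$, equals $\frac{1}{12}(12 + 5n - 6n - \sum_i \alpha(v,e_i)) = \frac{1}{12}(12 - n - \sum_i \alpha(v,e_i))$. I expect their formula to have exactly the shape ``(constant) $-$ (number of triangles)/something $-$ (sum of discrete curvatures)/$12$'', so that the $5F_v - 6E_v = -n$ contribution and the $-\sum\alpha(v,e)/12$ contribution line up term by term; the constant $12/12 = 1$ should match a ``regular point has invariant'' normalization (the invariant is $0$ at a smooth point and the sum over all vertices must be $\chi(\Delta)$, which cross-checks against Proposition~\ref{prop:noether}). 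A sanity check I would include: take $v$ with $n = 6$ triangles and all $\alpha(v,e) = 1$, modeling a genuinely smooth (nonsingular) $\ZZ$-affine point; then the coefficient is $\frac{1}{12}(12 - 6 - 6) = 0$, matching $i_{\mathrm{loc}}(v) = 0$ at a regular point.

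The main obstacle is purely one of translating conventions: Kontsevich--Soibelman phrase $i_{\mathrm{loc}}$ in the language of $\ZZ$-affine structures and their monodromy/period data, not in terms of $\Delta$-complexes and structure constants $\alpha$, so the real work is setting up a dictionary between the two and verifying that the orientation hypothesis makes the link a coherently oriented cycle so that the monodromy is well-defined up to conjugacy. Once that dictionary is fixed, the remaining computation is the elementary trace/bend identity above, which is routine. I would present the dictionary carefully, quote their formula, and then let the identity fall out.
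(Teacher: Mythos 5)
Your proposal follows essentially the same route as the paper: the paper likewise uses that the link of~$v$ is a cycle (so $E_v=F_v$ and $5F_v-6E_v=-E_v$), decomposes the loop around~$v$ into one gluing step per edge, writes each step as a shear $\bigl(\begin{smallmatrix}1&0\\ \alpha(w,e)&1\end{smallmatrix}\bigr)$ followed by $\bigl(\begin{smallmatrix}0&1\\ -1&0\end{smallmatrix}\bigr)$, identifies these with Kontsevich--Soibelman's generators contributing $\alpha(w,e)/12$ and $-3/12$, uses $\alpha(w,e)=2-\alpha(v,e)$ and the extra $+1$ from the full rotation to land exactly on your predicted identity $i_{\mathrm{loc}}(v)=1-\frac{1}{12}\sum_e(\alpha(v,e)+1)$. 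The one point to be careful about, which your write-up leaves slightly ambiguous, is that $i_{\mathrm{loc}}$ is computed from the word of elementary gluings (your ``list of bends''), not from the trace or an entry of the total monodromy matrix $\mu_v$ alone, since the invariant lives on a central extension of $SL_2(\ZZ)$ and is not conjugation-invariant as a function of $\mu_v$.
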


\begin{proof}
We fix a vertex~$v$ and look at how the affine linear structure varies as we
make a small loop around~$v$. Let $f$ be a $2$-simplex containing $v$ and let
$u$ and~$w$ be the other vertices of~$f$. We choose coordinates
for~$f$ such that $v = (0,0)$, $u = (1,0)$, and $w=(0,1)$ and look at how
the coordinates change as we cross the edge~$e$ between $v$ and~$w$. Let
$f'$ be the triangle on the other side of~$e$ and $x$ the vertex
of~$f'$ which is not in~$e$. If we extend our affine linear coordinates
across $e$ to $f'$, then $x$ is located at $(-1, \alpha(w, e))$. After applying
the skew transformation
\begin{equation}\label{eq:local-invariant-1}
\begin{pmatrix} 1 & 0 \\ \alpha(w, e) & 1 \end{pmatrix},
\end{equation}
the coordinates of $w$ and~$x$ are $(0, 1)$ and $(-1, 0)$ respectively. To get
the standard affine linear coordinates for $f'$, we make the further change
of coordinates:
\begin{equation}\label{eq:local-invariant-2}
\begin{pmatrix} 0 & 1 \\ -1 & 0 \end{pmatrix}.
\end{equation}
In the notation of~\cite[Sec.~6.5]{kontsevich-soibelman}, the matrices
in~(\ref{eq:local-invariant-1}) and~(\ref{eq:local-invariant-2}) are equal to
$(a_3 a_2^{-1})^{\alpha(w, e)}$ and~$a_2^{-1}$ respectively. From the definition
of the invariant, these contribute $\alpha(w,e)/12$ and $-3/12$ respectively.
Since $e$ has
degree~$2$, we have the relation $\alpha(w, e) = 2 - \alpha(v, e)$, so the sum
of these contributions can be written $-\alpha(v, e) - 1$. After making a complete loop, we've performed one rotation of our coordinate
system, corresponding to the element~$u$ of Kontsevich-Soibelman, which
contributes $1$, and so Kontsevich and Soibelman's local
invariant at~$v$ is:
\begin{equation}\label{eq:local-invariant}
1 - \frac{1}{12} \sum_{e \in \link_{\Delta}(v)_{0}} \big(\alpha(v, e) + 1\big).
\end{equation}

On the other hand, if $\Delta$ is a manifold, then the link of~$v$ is a cycle,
so it has the same number of edges as vertices. In this case, the coefficient
of~$v$ in the second Todd class according to~(\ref{eq:todd}) simplifies
to~(\ref{eq:local-invariant}) and so we're done.
\end{proof}

Another precursor to our definition of the second Todd class is 
the second Chern class of a tropical manifold as defined by Kristin
Shaw~\cite[Def.~3.2.14]{shaw}. A tropical manifold is locally modeled on the
Bergman fan of a loopless matroid~\cite{ardila-klivans}.  In the rest of this
section, we study the translation of Definition~\ref{def:todd}, not just for
matroid fans, but any multiplicity-free tropical variety.
In particular, by \defi{multiplicity-free tropical variety}, we mean a subset
of~$\RR^N$ which is the support of a balanced, $2$-dimensional finite polyhedral
complex, where all facets have multiplicity~$1$. 

The local structure of the tropical variety~$V$ at a point~$p$ is given by its
\defi{star} $\operatorname{star}_V(p)$, which consists of points $w \in \RR^N$ such that $p + \varepsilon w
\in V$ for all sufficiently small~$\varepsilon$. We now describe the second Todd class
of~$V$ at~$p$ in terms of its star. We first choose a unimodular fan~$\Sigma$
whose support is $\operatorname{star}_V(p)$. Let $v$ be the minimal
vertex along a ray~$e$ of~$\Sigma$ and let $w_1, \ldots, w_d$ be the
minimal vertices along rays adjacent to~$v$. By the balancing condition and
since $v$ is the minimal integral vertex,
\begin{equation}\label{eq:star-relation}
w_1 + \cdots + w_d = c v
\end{equation}
for some integer~$c$ and we define $\alpha(0, e) = d - c$, where the notation is
taken to be suggestive of the structure constants of a weak tropical complex
coming from a subdivision of a tropical
variety~\cite[Const.~3.3]{cartwright-complexes}.
Then, we define the second Todd
class of $\Sigma$ to be:
\begin{equation}\label{eq:todd-fan}
\td_2(\Sigma) = \frac{1}{12} \Bigg(12 + 5 F - 6 E - \sum_{e \in \Sigma_1}
\alpha(0, e) \Bigg),
\end{equation}
where $F$ and $E$ are the number of $2$-dimensional cones and rays of~$\Sigma$,
respectively, and the summation is over the rays of~$\Sigma$.

\begin{prop}\label{prop:invariance}
If $\Sigma$ and $\Sigma'$ are two unimodular fans with the same support, then
$\td_2(\Sigma) = \td_2(\Sigma')$.
\end{prop}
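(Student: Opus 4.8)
The plan is to use the standard toric fact that any two unimodular fans with the same support are connected by a chain of elementary subdivisions, together with a short local computation showing that each term of~(\ref{eq:todd-fan}) transforms so as to leave $\td_2$ unchanged.

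First I would reduce to the case in which $\Sigma'$ is obtained from $\Sigma$ by a single \emph{elementary subdivision}: the operation that replaces a $2$-dimensional cone $\sigma$ with minimal integral generators $v$ and $w$ by the two cones spanned by $\{v,\, v+w\}$ and by $\{v+w,\, w\}$. This is exactly the reduction carried out in the proof of Lemma~\ref{lem:local-matrix}: a common refinement $\Sigma''$ of $\Sigma$ and $\Sigma'$ exists by toric resolution of singularities, and by the theory of minimal models of toric surfaces $\Sigma''$ is obtained from each of $\Sigma$ and $\Sigma'$ by a sequence of such elementary subdivisions. Since $\{v,w\}$ is a lattice basis of the plane it spans, the vector $v+w$ is primitive and lies in the relative interior of $\sigma$, so it is a genuinely new ray, and both new cones are again unimodular; hence $\Sigma'$ is a unimodular fan with the same support as $\Sigma$.

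Next I would track how each quantity in~(\ref{eq:todd-fan}) changes under one elementary subdivision of $\sigma=\langle v,w\rangle$, writing $\rho=\langle v+w\rangle$ for the new ray. The number $F$ of $2$-dimensional cones and the number $E$ of rays each increase by one, so $5F-6E$ decreases by one. For the $\alpha(0,e)$-terms: any ray distinct from $\langle v\rangle$, $\langle w\rangle$, and $\rho$ has the same collection of incident $2$-cones before and after, so its value of $\alpha$ is unchanged. The ray $\rho$ lies in exactly the two cones $\langle v, v+w\rangle$ and $\langle v+w, w\rangle$, so relation~(\ref{eq:star-relation}) reads $v+w=1\cdot(v+w)$ and hence $\alpha(0,\rho)=2-1=1$. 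For $\langle v\rangle$, the cone $\sigma$ is replaced by $\langle v, v+w\rangle$ among its incident cones; the number of incident cones is unchanged, while the left-hand side of~(\ref{eq:star-relation}) changes by $(v+w)-w=v$, so the integer $c$ for this ray increases by one and $\alpha(0,\langle v\rangle)$ decreases by one. By symmetry the same holds for $\langle w\rangle$. Thus $\sum_{e}\alpha(0,e)$ changes by $1+(-1)+(-1)=-1$, and therefore $12\,\td_2(\Sigma')-12\,\td_2(\Sigma)=(5-6)-(-1)=0$; iterating along the chain of elementary subdivisions yields $\td_2(\Sigma)=\td_2(\Sigma')$.

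I expect the only delicate point to be the reduction step — verifying that a common unimodular refinement exists and can be reached by elementary subdivisions of precisely this form, uniformly in the possible shapes of the support (a $2$-dimensional fan, a half-plane, or all of $\RR^2$). But this is exactly the toric input already invoked in the proof of Lemma~\ref{lem:local-matrix}, so the substance of the argument is the elementary bookkeeping of the previous paragraph.
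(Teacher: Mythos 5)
Your proposal is correct and follows the paper's proof essentially verbatim: reduce to a single elementary subdivision via a common unimodular refinement and toric minimal models, then check that $5F-6E$ drops by one while $\sum_e\alpha(0,e)$ also drops by one (via $\alpha(0,\rho)=1$ for the new ray and a decrease of one at each of $\langle v\rangle$ and $\langle w\rangle$). The only difference is that you justify the changes in the $\alpha$-values directly from~(\ref{eq:star-relation}), which the paper merely asserts.
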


\begin{proof}
We let $\Sigma''$ be a unimodular common refinement of $\Sigma$ and~$\Sigma'$,
which exists by toric resolution of singularities~\cite[Sec.~2.6]{fulton-toric}.
As in the proof of Lemma~\ref{lem:local-matrix}, the theory of
minimal models for toric varieties shows that $\Sigma''$ is formed by
subdivisions of $\Sigma$ and similarly for $\Sigma'$, where a subdivision means
replacing a cone spanned by rays $r_1$ and~$r_2$ with two cones, spanned by
$r_1$ and $r_1 + r_2$ and by $r_1+r_2$ and $r_2$, respectively. Thus, it
suffices to prove the theorem when $\Sigma'$ is the fan formed from $\Sigma$ by
such a subdivision.

If we let $\alpha'$ denote the parameters for $\Sigma'$ after such a
subdivision, then $\alpha'(0,r) = \alpha(0,r)$ except for:
\begin{equation*}
\alpha'(0, r_1) = \alpha(0, r_1) -1 \quad \alpha'(0, r_1 + r_2) = 1
\quad \alpha'(0, r_2) = \alpha(0, r_2) - 1.
\end{equation*}
Thus, under the subdivision, $\sum \alpha(0,e)$ in the definition of the second
Todd class~(\ref{eq:todd-fan}) decreases by~$1$, so the second Todd class
of~$\Sigma'$ is:
\begin{align*}
\td_2(\Sigma') &=
\frac{1}{12} \Bigg( 12 + 5(F+1) - 6(E+1) - \Big({-1} + \sum_{e \in
\Sigma_1} \alpha(0, e)\Big) \Bigg) \\
&=
\frac{1}{12} \Bigg(12 + 5F - 6E - \sum_{e \in \Sigma_1} \alpha(0, e) \Bigg)
= \td_2(\Sigma) \qedhere
\end{align*}
\end{proof}

By Proposition~\ref{prop:invariance}, we can define a second Todd class for any
tropical variety~$V$. Specifically, we define $\td_2(V) = \sum_{p \in
V}\td_2(\Sigma_p)[p]$, where for each point~$p$, $\Sigma_p$ is any unimodular
fan supported on $\operatorname{star}_V(p)$. The formal sum is finite because if
we choose a polyhedral decomposition of~$V$, then $\td_2(V)$ is supported at its
vertices by the following result.

\begin{prop}\label{prop:todd-triviality}
If $\Sigma$ is a unimodular fan whose support is a product with~$\RR$, then
$\td_2(\Sigma)$ is trivial.
\end{prop}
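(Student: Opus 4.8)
The plan is to invoke Proposition~\ref{prop:invariance} to replace $\Sigma$ by whichever unimodular fan on the given support is most convenient, and then to evaluate~(\ref{eq:todd-fan}) for that fan directly. Write the support as $|W| \times \RR$, where $W$ is a $1$-dimensional fan with primitive ray generators $u_1, \ldots, u_k$ lying in a coordinate subspace, and where the $\RR$-factor is spanned by a primitive lattice vector~$e_0$ complementary to that subspace. Since $|W| \times \RR$ is multiplicity-free, each cylinder over a ray of~$W$ has weight~$1$, and balancing of $|W| \times \RR$ in the $e_0$-direction projects to the relation $u_1 + \cdots + u_k = 0$ for~$W$.

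First I would introduce an explicit unimodular fan~$\Sigma$ with support $|W| \times \RR$: let its rays be $\RR_{\geq 0} u_1, \ldots, \RR_{\geq 0} u_k$ together with $\RR_{\geq 0} e_0$ and $\RR_{\geq 0}(-e_0)$, and let its two-dimensional cones be the $2k$ cones $\langle u_i, e_0 \rangle$ and $\langle u_i, -e_0 \rangle$ for $i = 1, \ldots, k$. Each of these cones is unimodular: since $u_i$ is primitive in the coordinate sublattice, $\{u_i, e_0\}$ (resp.\ $\{u_i, -e_0\}$) extends to a basis of the full lattice. One checks directly that this collection of cones glues to a fan with the correct support and that it is balanced with all weights~$1$. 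Thus $\Sigma$ has $E = k + 2$ rays and $F = 2k$ maximal cones.

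Next I would read off the parameters $\alpha(0, e)$ from~(\ref{eq:star-relation}). The ray $\RR_{\geq 0} u_i$ is adjacent to exactly the two rays $\RR_{\geq 0}(\pm e_0)$, whose minimal generators are $e_0$ and $-e_0$; since $e_0 + (-e_0) = 0 = 0 \cdot u_i$, the integer~$c$ in~(\ref{eq:star-relation}) is~$0$ and $\alpha(0, \RR_{\geq 0} u_i) = 2 - 0 = 2$. The ray $\RR_{\geq 0} e_0$ is adjacent to all $k$ rays $\RR_{\geq 0} u_i$, and $u_1 + \cdots + u_k = 0 = 0 \cdot e_0$ gives $\alpha(0, \RR_{\geq 0} e_0) = k - 0 = k$, and likewise $\alpha(0, \RR_{\geq 0}(-e_0)) = k$. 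Substituting $E = k+2$, $F = 2k$, and $\sum_{e \in \Sigma_1} \alpha(0, e) = 2k + k + k = 4k$ into~(\ref{eq:todd-fan}) gives $12\,\td_2(\Sigma) = 12 + 5(2k) - 6(k+2) - 4k = 0$, as desired.

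The substance here is bookkeeping rather than anything conceptual: the one step needing care is verifying that the proposed $\Sigma$ really is a balanced unimodular fan with support $|W| \times \RR$, and extracting $u_1 + \cdots + u_k = 0$ from the multiplicity-free hypothesis. The remaining degenerate possibility, that $|W|$ is a line (so $k = 2$ and $u_2 = -u_1$), is handled by the same count.
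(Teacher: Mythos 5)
Your proof is correct and follows essentially the same route as the paper: both replace $\Sigma$ (via Proposition~\ref{prop:invariance}) by the fan whose rays are $\pm e_0$ together with the rays of the cross-section and whose maximal cones are all products of these, then read off $E$, $F$, and the $\alpha(0,e)$ and plug into~(\ref{eq:todd-fan}); your count with $k$ cross-sectional rays matches the paper's with $n = k+2$ total rays. The only difference is that you spell out the unimodularity of the product cones and derive $u_1 + \cdots + u_k = 0$ from balancing, details the paper leaves implicit.
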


\begin{proof}
We choose coordinates such that the factor of~$\RR$ is the first coordinate.
Then, by Proposition~\ref{prop:invariance}, we can replace~$\Sigma$ with a fan
whose rays are:
\begin{align*}
r_1 &= \RR_{\geq 0} \cdot (1, 0, \ldots, 0) \\
r_2 &= \RR_{\geq 0} \cdot (-1, 0, \ldots, 0) \\
r_3, \ldots, r_n &\subset \{0\} \times \RR^{N-1}
\end{align*}
and whose $2$-dimensional cones are spanned by every combination of either $r_1$
or~$r_2$ and one of $r_3, \ldots, r_n$. Then, for the definition of the second
Todd class, $E = n$, $F= 2(n-2)$, and we can
compute the constants $\alpha(0,r_i)$ as follows:
\begin{align*}
\alpha(0,r_1) = \alpha(0, r_2) &= n-2 &
\alpha(0, r_3) = \cdots = \alpha(0, r_n) &= 2.
\end{align*}
Plugging these into~(\ref{eq:todd-fan}), we get:
\begin{equation*}
\td_2(\Sigma) = \frac{1}{12}
\big(12 + 10 (n-2) - 6 n - 2(n-2) - (n-2) 2\big) = 0,
\end{equation*}
which completes the proof.
\end{proof}

Our definitions of the second Todd class for multiplicity-free tropical
varieties and weak tropical complexes are compatible in the following sense.
In~\cite[Sec.~3]{cartwright-complexes}, a recipe was given for converting the
tropicalization of a sch\"on algebraic variety, with a unimodular subdivision,
into a weak tropical complex. The algebraic variety was necessary because cells
from the subdivision could be duplicated in the construction of the
parametrizing tropical variety introduced by~\cite{helm-katz}. Since we're
working with multiplicity-free tropical varieties in a combinatorial setting, we
do not have initial ideals, and we can construct a weak tropical complex without
any duplication, using exactly the bounded cells of the subdivision and the
structure constants as in~\cite[Const.~3.3]{cartwright-complexes}. Then, we have
the following compatibility:

\begin{prop}\label{prop:todd-compatibility}
Let $V$ be a tropical variety with a unimodular subdivision as
in~\cite[Sec.~3]{cartwright-complexes} and let $\Delta$ the
weak tropical surface formed from the bounded cells. Then, $td_2(V)$ and
$\td_2(\Delta)$ agree at all points not in the closure of the unbounded cells
of~$V$.
\end{prop}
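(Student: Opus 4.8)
The plan is to verify the equality pointwise. Fix a point $p \in \Delta$ which is not in the closure of any unbounded cell of the chosen subdivision of $V$. Then a neighborhood of $p$ in $V$ meets only bounded cells, so it coincides with a neighborhood of $p$ in $\Delta$; in particular $\operatorname{star}_V(p)$ agrees with the fan over $\link_\Delta(p)$, together with its lattice structure, and the unimodular subdivision of $V$ restricts to a unimodular fan $\Sigma_p$ on this star. This $\Sigma_p$ may be used to evaluate $\td_2(V)$ at $p$ via formula~(\ref{eq:todd-fan}), and I will compare the result with the coefficient of $[p]$ in~(\ref{eq:todd}).

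First I would dispose of the points $p$ that are not vertices of $\Delta$, since such points contribute nothing to $\td_2(\Delta)$. If $p$ lies in the interior of an edge $e$ of $\Delta$, then $\operatorname{star}_V(p)$ is the product of the line spanned by $e$ with the cone over the finite set $\link_\Delta(e)$, hence a product with~$\RR$; if $p$ lies in the interior of a facet, then $\operatorname{star}_V(p) = \RR^2$ is again a product with~$\RR$. In either case Proposition~\ref{prop:todd-triviality} gives $\td_2(\Sigma_p) = 0$, which matches $\td_2(\Delta)$ at $p$.

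It remains to treat a vertex $v$ of $\Delta$. Here the rays of $\Sigma_v$ are the edges of $\Delta$ incident to $v$, equivalently the vertices of $\link_\Delta(v)$, and the two-dimensional cones of $\Sigma_v$ are the facets of $\Delta$ incident to $v$, equivalently the edges of $\link_\Delta(v)$; thus in the notation of~(\ref{eq:todd-fan}) one has $E = E_v$ and $F = F_v$. The one substantive point is to check that the integer $\alpha(0,e)$ attached to a ray $e$ of $\Sigma_v$ by the balancing relation~(\ref{eq:star-relation}) agrees with the structure constant $\alpha(v,e)$ that the recipe of \cite[Const.~3.3]{cartwright-complexes} assigns to the pair $(v,e)$ when it builds $\Delta$ from the bounded cells of~$V$. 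This is exactly where the argument needs care, but it should reduce to an unwinding of definitions: the structure constants in \cite[Const.~3.3]{cartwright-complexes} are defined by precisely this balancing identity, writing the sum of the minimal integral generators of the rays adjacent to (the minimal generator of)~$e$ as $c$ times that generator and setting $\alpha(v,e) = d - c$ with $d$ the number of such adjacent rays. Granting this, $\alpha(0,e) = \alpha(v,e)$ for every ray of $\Sigma_v$, and substituting $E = E_v$, $F = F_v$, and these equalities into~(\ref{eq:todd-fan}) yields exactly the coefficient of $[v]$ in~(\ref{eq:todd}). Assembling the three cases completes the proof. The main obstacle is the bookkeeping in this last identification $\alpha(0,e) = \alpha(v,e)$, since it depends on matching the combinatorial star-based definition here with the construction from the subdivision of a tropical variety; everything else is a direct application of Propositions~\ref{prop:invariance} and~\ref{prop:todd-triviality} together with the observation that the local picture of $V$ at $p$ is that of $\Delta$.
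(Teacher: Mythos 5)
Your proposal follows the paper's proof essentially verbatim: dispose of non-vertex points via Proposition~\ref{prop:todd-triviality} (using that the star is a product with $\RR$ there), then at a vertex match $E$ with $E_v$, $F$ with $F_v$, and the structure constants. The one step you flag but do not actually carry out---the identification $\alpha(0,e)=\alpha(v,e)$---is where the paper does its only real work: the relation in \cite[Const.~3.3]{cartwright-complexes} lives in $\RR^{N+1}$ with $V$ placed in $\RR^N\times\{1\}$, and only after quotienting by the line through $p$ does it become~(\ref{eq:star-relation}), with $c$ equal to $\alpha(w,e)$ for $w$ the \emph{far} endpoint of~$e$, so that $\alpha(v,e)=\deg(e)-\alpha(w,e)=d-c$ follows from~(\ref{eq:constraint}); your description of the construction as directly setting $\alpha(v,e)=d-c$ from the balancing at $v$'s star skips this quotient-and-constraint step but arrives at the same conclusion.
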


\begin{proof}
Let $p$ be a point of~$V$ not contained in the closure of any unbounded cell
of the subdivision, and we wish to show that the coefficients of $\td_2(V)$ and
$\td_2(\Delta)$ agree at~$p$. First suppose that $p$ is not a vertex of the
subdivision. By definition, $\td_2(\Delta)$ is supported at the vertices
of~$\Delta$, so its coefficient is trivial at~$p$. Likewise, in a neighborhood
of~$p$, $V$ can be factored as a product with~$\RR$, so $\td_2(V)$ is also
trivial at~$p$ by Proposition~\ref{prop:todd-triviality}.

Thus, we are reduced to the case where $p$ is a vertex of the unimodular
subdivision. In this case, the subdivision of~$V$ decomposes $\link_V(p)$ into a
unimodular fan. Since $p$ is not contained in any unbounded cells, the cones of
this fan are in natural correspondence with cells of $\link_p(\Delta)$. By the
formal similarity of the equations for the second Todd class in the two
settings, it only remains to check that if $e$ is an edge with
endpoint~$p$, then $\alpha(p,e)$ from the weak tropical complex equals
$\alpha(0,e)$ defined from the fan. Both constructions involve a sum of the
vertices of facets containing~$e$, with the only difference being
that the construction of the structure constant $\alpha(p,e)$ works in
$\RR^{N+1}$ with $V$ placed in $\RR^{N} \times \{1\}$. The star of~$V$ at~$p$
can be obtained as the quotient of $\RR^{N+1}$ by the line generated by~$p$ in
this embedding. Taking this quotient, the relation from
\cite[Const.~3.3]{cartwright-complexes} becomes~(\ref{eq:star-relation}), and so
$c$ in the latter equation equals $\alpha(w, e)$, where $w$ is the endpoint
of~$e$ in~$\Delta$ other than $p$. Thus, $\alpha(v,e) = d - c$, where $d$ is the
degree of~$e$, which is the same as our definition of $\alpha(0, r)$.
\end{proof}

Now we return to Bergman fans of rank~$3$ matroids, for which we can express the
second Todd class more explicitly in terms of the matroid's invariants.
In addition, we will
compute the square of the canonical divisor in order to relate our definition of
the second Todd class with the second Chern class of~\cite{shaw}. As
in~\cite[Sec.~5.3]{mikhalkin}, we define the canonical divisor of
multiplicity-free tropical variety~$V$ to be $K_V = \sum_r (\deg e - 2) [r]$,
where the summation is over the $1$-dimensional cells, for any choice of
polyhedral decomposition of~$V$.

\begin{lem}\label{lem:matroid-invariants}
Let $V$ be the support of the Bergman fan of a rank~$3$ simple matroid~$M$ with
$n$ elements, $m$ flats, and $\ell$ complete flags. Then, $K_V$ is a divisor and
we have the following invariants:
\begin{align*}
K_V^2 &= (-5n - 4m + 3 \ell +9) [0] \\
\td_2(V) &= {\textstyle\frac{1}{12}}(-7n -5m +4\ell + 12) [0],
\end{align*}
\end{lem}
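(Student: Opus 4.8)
The plan is to evaluate both invariants at the cone point $0$, where they are concentrated, by choosing a convenient unimodular fan structure $\Sigma$ on $V$ and running the linear algebra of Section~\ref{sec:local}. I would take $\Sigma$ to be the coarse Bergman fan of $M$: its rays are the proper nonempty flats --- the $n$ rank~$1$ flats (the elements, since $M$ is simple) and the $m$ rank~$2$ flats (``lines'') --- and its $2$-dimensional cones are the $\ell$ complete flags $F_1\subsetneq F_2$. Writing $e_F$ for the primitive ray generator of a flat $F$, namely the image in $\ZZ^n/\ZZ\mathbf 1$ of the $\{0,1\}$-indicator vector of $F$, a triangular change of basis on $\ZZ^n$ exhibits $e_{F_1},e_{F_2}$ as part of a basis, and since $F_2$ is a proper flat this descends to $\ZZ^n/\ZZ\mathbf 1$; so $\Sigma$ is unimodular, with $E=n+m$ rays and $F=\ell$ two-cones in the notation of~(\ref{eq:todd-fan}). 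I record that $\deg e_{\{p\}}$ is the number $d_p$ of lines through $p$, that $\deg e_L=|L|$, and that $\sum_p d_p=\sum_L|L|=\ell$, all of which rest on the single fact that in a simple rank~$3$ matroid every pair of elements spans a unique line.

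Next I would compute the structure constants of~(\ref{eq:star-relation}) and the matrix $M_{0,\Sigma}$. Summing primitive generators over the rays adjacent to a line $L$ gives $\sum_{p\in L}e_p=e_L$, so $\alpha(0,e_L)=|L|-1$; summing over the rays adjacent to a point $p$ gives $\sum_{L\ni p}e_L\equiv(d_p-1)e_p\pmod{\mathbf 1}$, so $\alpha(0,e_{\{p\}})=1$. By Lemma~\ref{lem:link} the off-diagonal part of $M_{0,\Sigma}$ is the point--line incidence matrix, while the diagonal entry of a ray $e$ is $\alpha(0,e)-\deg e$ (the $M_{0,\Sigma}$-analogue of the diagonal of $M_v$, with $\deg e-\alpha(0,e)$ in the role of $\alpha(w,e)$, as in the proof of Proposition~\ref{prop:todd-compatibility}); thus $(M_{0,\Sigma})_{e_L,e_L}=-1$ and $(M_{0,\Sigma})_{e_{\{p\}},e_{\{p\}}}=1-d_p$. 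This already settles the Todd class: $\td_2(V)$ is supported at $0$ by Proposition~\ref{prop:todd-triviality}, and with $\sum_e\alpha(0,e)=n+\sum_L(|L|-1)=n+\ell-m$, formula~(\ref{eq:todd-fan}) gives $\td_2(\Sigma)=\tfrac1{12}\bigl(12+5\ell-6(n+m)-(n+\ell-m)\bigr)=\tfrac1{12}(-7n-5m+4\ell+12)$.

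For the canonical divisor, let $\mathbf c$ be its coefficient vector, $\mathbf c_{e_F}=\deg e_F-2$. To see $K_V$ is $\QQ$-Cartier --- hence a divisor --- I would check $\mathbf c$ lies in the image of the symmetric matrix $M_{0,\Sigma}$, equivalently that it is orthogonal to $\ker M_{0,\Sigma}$, which consists of the slope vectors $(\langle u,e_F\rangle)_F$ of linear functions ($u\in\RR^n$ with $\langle u,\mathbf 1\rangle=0$). The pairing is $\sum_i u_i\sum_{F\ni i}(\deg e_F-2)$, whose inner sum is $(d_i-2)+\sum_{L\ni i}(|L|-2)=n-3$ for every $i$, using $\sum_{L\ni i}|L|=\sum_j\#\{L:i,j\in L\}=d_i+(n-1)$; since $\langle u,\mathbf 1\rangle=0$ the pairing vanishes. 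To get $\deg K_V^2$, which is concentrated at $0$ (the contribution at any other point of $V$ vanishing because $V$ is locally a product with $\RR$ there, cf.\ Proposition~\ref{prop:todd-triviality}), I would solve $M_{0,\Sigma}\mathbf f=\mathbf c$: writing $\mathbf f_{e_{\{p\}}}=a_p$ and $\mathbf f_{e_L}=b_L$, the line equations force $b_L=\sum_{p\in L}a_p-|L|+2$, and feeding this into the point equations collapses everything (again via $\sum_{L\ni p}|L|=d_p+n-1$) to the single condition $\sum_p a_p=n-3$. Since $\mathbf c\cdot\mathbf f=\mathbf f^T M_{0,\Sigma}\mathbf f$ is independent of the chosen solution, I can put all the weight on one element; then, using $\sum_p d_p=\sum_L|L|=\ell$ and $\sum_L|L|^2=\sum_{p,q}\#\{L:p,q\in L\}=\ell+n(n-1)$, the arithmetic gives $\mathbf c\cdot\mathbf f=-5n-4m+3\ell+9$.

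The conceptual content is confined to the unimodularity of the coarse Bergman fan and the identification of the diagonal of $M_{0,\Sigma}$ with $\alpha(0,e)-\deg e$; everything else is organized bookkeeping around the rank~$3$ incidence identities. The step I expect to be fiddliest is the final collapse of $\mathbf c\cdot\mathbf f$ to a linear combination of $n$, $m$, and $\ell$, where the potentially troublesome quantity $\sum_L|L|^2$ must be eliminated using $\sum_L|L|^2=\ell+n(n-1)$.
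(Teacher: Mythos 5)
Your proposal is correct and follows essentially the same route as the paper: the same fan (rays indexed by elements and rank~$2$ flats, two-cones by flags), the same structure constants $\alpha(0,e_{\{p\}})=1$ and $\alpha(0,e_L)=|L|-1$, the same local intersection matrix, and the same evaluation of~(\ref{eq:todd-fan}). The only cosmetic difference is in the $K_V^2$ computation: the paper exhibits one explicit solution $\bff$ of $M_0\bff=[K_V]$ (which is the instance $a_{A_1}=-2$, $a_{A_i}=1$ of your general solution $\sum_p a_p = n-3$), whereas you solve the system in general and pick a different representative; this also makes your preliminary orthogonality check against $\ker M_{0,\Sigma}$ redundant, which is just as well since the identification of that kernel with ambient linear functions is not justified in the paper.
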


\begin{proof}
By \cite[Prop.~3.9]{cartwright-complexes}, we can
compute $K_V^2$ using the tropical complex~$\Delta$ of any subdivision
of~$V$ such that only bounded cells contain the origin. In addition, we choose a
subdivision which agrees with the fine subdivision in a neighborhood of the
origin.
We will now compute the local intersection matrix of
$\Delta$ at the origin. Recall from~\cite{ardila-klivans} that $V$ is
constructed in $\RR^{n-1}$, whose integral points are generated by vectors $v_A$
as $A$ ranges over the elements of~$M$, and these vectors satisfy the single
relation $\sum_A v_A = 0$. The rays of the fine
subdivision of~$V$, and thus the edges of~$\Delta$ containing $0$, are in
bijection with the disjoint union of the elements of~$M$ and its rank~$2$ flats.
The ray for an element~$A$ of~$M$ is spanned by $v_A$ and the ray of a rank~$2$
flat~$S$ is spanned by~$v_S$, which is defined to be $\sum_{A \in S} v_A$. In
both cases, $v_A$ and~$v_S$ are the first integral points along their rays, so
they are vertices of~$\Delta$. For every element~$A$ contained in a rank~$2$
flat~$S$, we have a face of~$\Delta$ containing $0$, $v_A$, and~$v_S$.

We now compute the structure constants for~$\Delta$. If $A$ is any element
of~$M$, then the facets containing the edge~$e_A$ from $0$ to $v_A$ are in
bijection with the rank~$2$ flats containing~$A$. If $b_A$ denotes the number of
rank~$2$ flats containing~$A$, then
\begin{equation*}
\sum_{S \ni A} v_S = b_A v_A + \sum_{B \neq A} v_B = (b_A - 1) v_A,
\end{equation*}
where, in the second summation, $B$ is any element of~$M$, and the first
equality is because
every element $B \neq A$ is in a unique rank~$2$ flat with~$A$.
Thus, $\alpha(v_A, e_A) = b_A - 1$ and
$\alpha(0, e_A) = 1$.
Similarly, we let $S$ be a rank~$2$ flat, and the faces containing the
edge~$e_S$ from $0$ to~$v_S$ correspond to the elements of~$S$, for which we
have $\sum_{A \in S} v_A = v_S$, by definition. Thus, $\alpha(v_S, e_S) = 1$ and
$\alpha(0, e_S) = \# S - 1$.

We number the elements $A_1, \ldots, A_n$ of~$M$
and the rank~$2$ flats $S_1, \ldots, S_m$ and we order the edges of~$\Delta$
containing $0$ as $e_{A_1}, \ldots, e_{A_n}, e_{S_1}, \ldots, e_{S_m}$. Then, we
have the local intersection matrix for~$\Delta$:
\begin{equation}\label{eq:local-matroid}
M_0 = \begin{pmatrix}
-b_{A_1} + 1 & \cdots & 0 & * & \cdots & * \\
\vdots & \ddots & \vdots & \vdots & & \vdots \\
0 & \cdots & -b_{A_n} + 1 & * & \cdots & * \\
* & \cdots & * & -1 & \cdots & 0 \\
\vdots & & \vdots & \vdots & \ddots & \vdots \\
* & \cdots & * & 0 & \cdots & -1
\end{pmatrix},
\end{equation}
where the $*$ denote blocks which record the incidences between the elements and
rank~$2$ flats.

By Lemma~\ref{lem:local-intersection-matrix}, to show that $K_V$ is a divisor,
it is sufficient to show that its vector representation is in the image of
$M_0$. This vector representation is:
\begin{equation*}
[K_V] = \begin{cases}
b_{A_i} - 2 & \mbox{if } i \leq n \\
\# S_{i-n} - 2 & \mbox{if } i > n
\end{cases}
\end{equation*}
Using the description~(\ref{eq:local-matroid}) of $M_0$, one can check that
$[K_V] = M_0 \bff$, where $\bff$ is defined by:
\begin{equation*}
\bff_i = \begin{cases}
-2 &\mbox{if $i = 1$} \\
1 & \mbox{if $2 \leq i \leq n$}\\
-1 & \mbox{if $i > n$ and the flat $S_{i-n}$ contains the element $A_1$} \\
2 & \mbox{if $i > n$ and the flat $S_{i-n}$ does not contain the element $A_1$}.
\end{cases}
\end{equation*}

The coefficient of~$K_V^2$ at the origin is given by the product $\bff^T M_0
\bff$ by Proposition~\ref{prop:matrix-intersections}. Using the definition of $\bff$ and of $[K_v] = M_0 \bff$, we
have:
\begin{align*}
K_V^2 &= -2(b_{A_1} - 2) + \sum_{i=2}^n (b_{A_i} - 2)
- \sum_{\substack{i = 1 \\ S_i \ni A_1}}^m (\# S_i - 2 )
+ 2 \sum_{\substack{i = 1 \\ S_i \not\ni A_1}}^m (\# S_i - 2) \\
&= -3(b_{A_1} - 2) + \sum_{i=1}^n (b_{A_i} - 2)
- 3\sum_{\substack{i=1 \\ S_i \ni A_1}}^m (\# S_i - 1 - 1)
+ 2 \sum_{i = 1}^m (\# S_i - 2) \\
\intertext{Every element other than $A_1$ is contained in exactly one flat with
$A_1$, so $\#S_i - 1$ in the second summation counts the elements other than
$A_1$, and with this we can evaluate the sums:}
&= -3b_{A_1} + 6 + (\ell - 2 n) - 3(n - 1 - b_{A_{1}}) + 2 (\ell - 2 m) \\
&= -5n -4m + 3 \ell + 9,
\end{align*}
which proves the first claim.

Second, we compute the second Todd class.
From our computations of the structure constants above, we can compute the sums:
\begin{equation*}
\sum_{A} \alpha(0, e_A) = n   \qquad\qquad
\sum_{S} \alpha(0, e_S) = \ell - m.
\end{equation*}
Now, we use the definition of the second Todd class~(\ref{eq:todd}):
\begin{align*}
\td_2(\Delta) &= {\textstyle\frac{1}{12}}(12 + 5 \ell - 6 (n + m) -
(n + \ell - m))[0] \\
&= {\textstyle\frac{1}{12}}(12 - 7n - 5m - 4\ell )[0],
\end{align*}
which is the second formula from the lemma statement.
\end{proof}

\begin{cor}
If $V$ is a matroid fan, then we have the relation $12 \td_2(V) = K_V^2 +
c_2(V)$, where $c_2(V)$ is the second Chern class, as defined by Shaw.
\end{cor}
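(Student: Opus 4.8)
The plan is to reduce the corollary to a single computation of Shaw's invariant, with all of the remaining arithmetic already packaged in Lemma~\ref{lem:matroid-invariants}; here $M$ is the rank-$3$ matroid of that lemma, so that $\td_2(V)$, $K_V^2$, and $c_2(V)$ are all $0$-cycles. From the lemma, $K_V^2$ and $\td_2(V)$ are both supported at the cone point, and
\[
12\,\td_2(V) - K_V^2 = \big((-7n - 5m + 4\ell + 12) - (-5n - 4m + 3\ell + 9)\big)[0] = (\ell - m - 2n + 3)[0].
\]
So it suffices to show that Shaw's second Chern class of the Bergman fan of $M$ also equals $(\ell - m - 2n + 3)[0]$.

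First I would note that $V$ is locally a product with $\RR$ away from the origin, so, exactly as for the second Todd class via Proposition~\ref{prop:todd-triviality}, Shaw's $c_2(V)$ is a multiple of $[0]$, and the entire problem is to identify that one coefficient. For this I would unwind the definition of $c_2$ of a tropical manifold~\cite[Def.~3.2.14]{shaw} at the cone point of the Bergman fan, using the explicit fine subdivision of $B(M)$ near the origin already recorded in the proof of Lemma~\ref{lem:matroid-invariants}: the rays are the $n$ vectors $v_A$ for elements $A$ and the $m$ vectors $v_S$ for rank-$2$ flats $S$, and the two-dimensional cones are the $\ell$ pairs $\langle v_A, v_S \rangle$ with $A \in S$. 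Feeding this incidence data into Shaw's recipe yields an integer linear combination of $n$, $m$, and $\ell$, and the claim is that it collapses to $\ell - m - 2n + 3$. This is a reassuring value, since $\ell - m - 2n + 3$ is $\bar\chi_M(1)$, the reduced characteristic polynomial of $M$ evaluated at $1$, and the Chern (equivalently, Chern--Schwartz--MacPherson) cycles of matroid fans are known to be governed by $\bar\chi_M$.

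Combining this computation with the displayed identity then gives $12\,\td_2(V) = K_V^2 + c_2(V)$. The hard part is the middle step: faithfully translating Shaw's definition of $c_2$ into the combinatorics of elements, rank-$2$ flats, and complete flags, and verifying that the resulting expression simplifies to $\ell - m - 2n + 3$ rather than some other linear combination of $n$, $m$, and $\ell$; everything outside of that is already contained in Lemma~\ref{lem:matroid-invariants}. As a sanity check, for realizable $M$ the identity is the tropical shadow of the classical Noether formula $12\,\chi(\mathcal O_S) = K_S^2 + c_2(S)$ on a suitable compactification associated to the hyperplane arrangement, which pins down the normalizations on both sides.
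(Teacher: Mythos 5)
Your reduction is exactly the one the paper uses: subtract the two formulas of Lemma~\ref{lem:matroid-invariants} to get $12\,\td_2(V) - K_V^2 = (\ell - m - 2n + 3)[0]$, and then identify this with Shaw's second Chern class. Your target value is correct: the paper's proof consists precisely of the observation that Shaw's $c_2$ of a matroid fan, read off from \cite[Def.~3.2.13]{shaw} in this notation, equals $3 - 2n - m + \ell$, after which the identity is immediate. The issue is that this identification is the \emph{entire} content of the corollary's proof, and it is exactly the step you flag as ``the hard part'' and leave as an unverified claim (``the claim is that it collapses to $\ell - m - 2n + 3$''). So while your outline, your localization of $c_2$ to the cone point, and your sanity checks (the value of the reduced characteristic polynomial at $1$, the classical Noether formula) all point in the right direction and agree with the paper, the proposal as written omits the single substantive verification --- evaluating Shaw's definition on the fine subdivision with $n$ element-rays, $m$ flat-rays, and $\ell$ incidence cones --- and so does not yet constitute a complete proof.
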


\begin{proof}
The second Chern class of a matroid fan in~\cite[Def.~3.2.13]{shaw} is, in the
notation of Lemma~\ref{lem:matroid-invariants}, equal to to $3 - 2n - m + \ell$,
from which the claimed equality is immediate.
\end{proof}

\end{document}